\documentclass[11pt]{article}

\usepackage{amsmath, amssymb, amscd, amsthm, amsfonts}
\usepackage{graphicx}
\usepackage{hyperref}
\usepackage{indentfirst}
\usepackage{mathtools}
\usepackage{enumerate}
\usepackage{geometry}
\usepackage{gensymb}

\title{\textsc{\Large{\textbf{Gradient Existence and Energy Finiteness of Local Minimizers
				in the Wasserstein $L^\infty$ Topology for Binary-Star Systems}}}}

\author{Hangsheng Chen\thanks{Department of Mathematics, Statistics, and Computer Science, M/C 249, University of Illinois at Chicago, 851 S. Morgan Street, Chicago, IL 60607, USA. Email: hchen261@uic.edu}}

\date{ }

\newtheorem{theorem}{Theorem}

\newtheorem  {corollary} [theorem]{Corollary}

\newtheorem  {lemma} [theorem]{Lemma}
\newtheorem  {proposition}[theorem]{Proposition}

\theoremstyle{definition}
\newtheorem{definition}[theorem]{Definition}

\newtheorem  {remark} [theorem]{Remark}

\numberwithin{theorem}{section}

\makeatletter
\newcommand{\mylabel}[2]{#2\def\@currentlabel{#2}\label{#1}}
\makeatother

\begin{document}

\maketitle

\begin{abstract}\label{abstract}	
	In this paper, we refine and complement McCann's results on binary-star systems \cite{McC06}, a compressible fluid model governed by the Euler-Poisson equations. We consider a general form of the equation of state that includes polytropic gaseous stars indexed by  $\gamma$ as a special case. Beyond revisiting McCann's framework and conclusions—where solutions to the Euler-Poisson equations are obtained as local energy minimizers via variational methods under the topology induced by the Wasserstein  $L^\infty$  distance—we focus on a detailed exploration of the properties of local energy minimizers in this topology, addressing three key aspects: (1) the feasibility of transitioning from the Euler-Lagrange equation to the Euler-Poisson equation by demonstrating gradient existence; (2) the existence of  $L^\infty$  functions within neighborhoods in this topology; and (3) the finiteness of the energy of local minimizers in this topology, contrasted with the non-existence of finite-energy local minimizers and the existence of infinite-energy weak local minimizers in the topology inherited from topological vector spaces.
	
	{\footnotesize\textbf{Key words:} Gaseous Binary-Star systems, Euler-Poisson equations, Calculus of variations, Wasserstein-$L^\infty$ topology}
\end{abstract}

 \tableofcontents
 
\section{Introduction}\label{section-introduction}

\setlength{\parindent}{1.5em}

We consider uniformly rotating star systems, where we simply consider only the orbital revolution around each other and ignore the rotations of objects around their own axes. It is described by the reduced Euler-Poisson equation:
\begin{equation} \label{EP'}
	-\omega^2 \tilde{\rho}(x) P_{12}(x) + \nabla P(\tilde{\rho}(x)) - \tilde{\rho}(x) \nabla V_{\tilde{\rho}}(x) = 0,
	\tag{EP'}
\end{equation}
Here $\tilde{\rho}(x)\geq 0$ is a compactly supported density function, $\omega\geq0$ is the angular velocity, $P_{12}$ is the projection map given by $P_{12}=(x_1,x_2,x_3)=(x_1,x_2,0)$, and $V_{\tilde{\rho}}(x) = \int_{\mathbb{R}^3} \frac{\tilde{\rho}(y)}{|y-x|}\,dy$. The pressure $P(\tilde{\rho})$ depends on the density only, with some appropriate assumptions which will be described later.

This reduction for uniformly rotating configurations arises from the full Euler–Poisson system:
\begin{equation} \label{EP}
	\begin{split}
		\partial_{t}\rho + \nabla \cdot (\rho v) &= 0, \\
		\rho\partial_{t}v + \rho(v \cdot \nabla)v + \nabla P(\rho) &= \rho\nabla V, \\
		\Delta V &= -4\pi\rho,
	\end{split} \tag{EP}
\end{equation}
which serves as a fundamental hydrodynamic model for Newtonian stellar dynamics, describing the evolution of an isolated self‑gravitating fluid. Here $\rho(x,t)\geq0$ is the density of fluids, $t\geq0$ is the time, $v(x,t)\in\mathbb{R}^3$ is the velocity, and $-V(x,t)$ is the gravitational potential. The existence of single-star and multi-star systems under different settings has been established via variational methods (e.g., \cite{AB71M, AB71, Auc91, CF80, CL94, Li91, McC06}) and perturbative approaches (e.g., \cite{JM17, JM19, Hei94, SW17, SW19, AKV23, Lic33}).

As discussed by Jang and Seok in \cite{JS22} and in Subsection \ref{subsection2.2-uniform rotation minimizers kinetic energy} below, if $\tilde{\rho}$ satisfies the reduced Euler-Poisson equation (\ref{EP'}), then $\rho\left( {t,y} \right) = \tilde{\rho}\left( R_{- \omega t}y \right)$, $v\left( {t,y} \right) = \omega\left( {- y_{2},y_{1},0} \right)^{T}$, and $V(t, y)=V_{\widetilde{\rho}}\left(R_{-\omega t} y\right)$ satisfy the full system (\ref{EP}), where $R_{- \omega t}y$ is the rotation map given in Definition \ref{notations}.

In \cite{McC06}, McCann used a variational method to construct uniformly rotating binary‑star solutions to \eqref{EP'}. His binary stars exhibit a dual characterization: as Hamiltonian and energy minimizers under conservative constraints, and as perturbations of simpler objects like non-rotating Lane-Emden stars or relative equilibria for point masses (e.g., \cite[Section 3.2]{JS22}). This characterization is essential for showing that the support of the energy constrained minimizer $\rho$ is contained within a ball of a certain radius, making $\rho$ a local minimizer with respect to the topology induced by Wasserstein $L^\infty$ (denoted by $W^\infty$ hereafter) distance. As discussed in Theorem \ref{Properties of LEM} (see also \cite[Theorem 2.1]{McC06}), this ensures the existence of $\nabla P(\rho)$ in $\mathbb{R}^3$ and validates (\ref{EP'}) in the whole space. However, McCann does not thoroughly address the existence of $\nabla P(\rho)$. While McCann provides valuable insights into the $W^\infty$ distance, further exploration is warranted—particularly why the local minimizer in this topology has finite energy, enabling the existence of the variational derivatives. These are what we aim to refine in this paper. 

To be precise, this paper makes the following contributions:
\begin{enumerate}[(a)]\label{contributions}
	\item \label{contribution (a)} Gradient Existence and Equation Derivation: We establish the existence of $\nabla P(\rho)$ for the local minimizer $\rho$, which enables a rigorous transition from the Euler-Lagrange equation to the Euler-Poisson equation. The result is formalized in Theorem \ref{Properties of LEM} and proved in Section \ref{section3-detailed analysis}. 
	\item \label{contribution (b)} Existence of $L^\infty$ Functions: We prove Lemma \ref{properties of Wasser.} (vi), showing any neighborhood in the topology induced by $W^\infty$ distance contains $L^\infty$ functions.
	\item \label{contribution (c)} Energy Finiteness and Comparison of Topologies: We furthermore establish the finiteness of energy and the existence of variational derivatives for local minimizers in Lemma \ref{diff. at mini.}. We also discuss results in the alternative topology inherited from the topological vector space, where we show finite‑energy local minimizers do not exist (Proposition \ref{no local minimizers with finite energy in a vector space topology}), yet weak local minimizers with infinite energy can be found (Proposition \ref{existence of weak local minima when energy blows up}).
\end{enumerate}

Part of the results presented in this paper are based on the author’s Master’s thesis \cite{CVD24}. Beyond their intrinsic mathematical interest and largely self-contained exposition, these results serve as preliminary findings and will be used in the author’s companion paper \cite{Che26E3} on star–planet systems, which also originates from the same thesis. In \cite{Che26E3}, they serve as an alternative to direct citations of McCann’s conclusions \cite{McC06}, providing a more precise and rigorous foundation for the arguments therein.

The structure of this paper is outlined as follows: in Section \ref{section2-McCann's construction}, we review McCann's constructions and results of binary stars \cite{McC06}, with some nontrivial modifications. In Section \ref{section3-detailed analysis}, we discuss the necessity of those nontrivial modifications, provide nontrivial supplementary details regarding the properties of local minimizers under the topology induced by $W^\infty$ distance, particularly focusing on contribution (\ref{contribution (a)}). In Section \ref{section4-Wasserstein metric}, we address contribution (\ref{contribution (b)}), establishing the existence of $L^\infty$ functions in any neighborhood. In Section \ref{section5-constant chemical potential}, we accomplish contribution (\ref{contribution (c)}): we apply results in Section \ref{section4-Wasserstein metric} to deduce the finiteness of energy for local minimizers and compare it with the results in alternative topology.
\section{McCann's Construction and Results of 2-Body Systems}\label{section2-McCann's construction}

In this section we review McCann's construction and results of 2-Body systems \cite{McC06}. In the first subsection, we introduce some basic settings, notations, assumptions, and problem settings such as the variational formulation. In the second subsection, we focus on explaining why the general energy form $E(\rho,v)$ can be replaced by the energy with uniform rotation $E_J (\rho)$. In the third subsection, we present some of McCann's nice conclusions.

\subsection{Notations and problem setting}\label{subsection2.1-notations and problem setting}

We introduce some notations and problem settings based on McCann's one \cite{McC06} and Jang and Seok's \cite{JS22}. 

\begin{definition}[Notations] \label{notations}
	We give the following definitions:
	\begin{enumerate}[(i)]
		\item The projection operator of $x$ onto the $x_1 x_2$ plane: $P_{12}(x)=P_{12}\left(x_1, x_2, x_3\right):=\left(x_1, x_2, 0\right)$.
		\item A bilinear form $\langle \cdot , \cdot \rangle_2: \mathbb{R}^3 \times \mathbb{R}^3 \rightarrow \mathbb{R}: \forall x, y \in \mathbb{R}^3,\langle x, y\rangle_2=P_{12}(x) \cdot P_{12}(y)= x_1 y_1+x_2 y_2$.
		\item $r(x):=\left(\langle x, x\rangle_2\right)^{\frac{1}{2}}=\sqrt{x_1^2+x_2^2}$.
		\item Rotation map:
		$$
		R_\theta:=\left(\begin{array}{ccc}
			\cos \theta & -\sin \theta & 0 \\
			\sin \theta & \cos \theta & 0 \\
			0 & 0 & 1
		\end{array}\right)
		$$
		\item $W^{\infty}$ metric: Wasserstein $L^{\infty}$ metric.
	\end{enumerate}
\end{definition}

The state of a fluid may be represented by its mass density $\rho(x) \geq 0$ and velocity vector field $v(x)$ on $\mathbb{R}^3$. The fluid interacts with itself through Newtonian gravity; hence, we need to consider gravitational interaction energy, which will be given later. Moreover, to define internal energy, we follow Auchmuty and Beals's assumptions \cite{AB71} and first consider a general form of the pressure $P(\rho)$ as the following:

\begin{enumerate}
	\item [\mylabel{F1}{(F1)}] $P:[0, \infty) \rightarrow[0, \infty)$ is continuous and strictly increasing;
	\item [\mylabel{F2}{(F2)}] $\lim\limits_{\rho \rightarrow 0} P(\rho) \rho^{-\frac{4}{3}}=0$;
	\item [\mylabel{F3}{(F3)}] $\lim\limits_{\rho \rightarrow \infty} P(\rho) \rho^{-\frac{4}{3}}=\infty$.
\end{enumerate}

With these assumptions, we also define $A(\rho)$ as the following:
\begin{equation}\label{A}
	A(\rho):=\int_1^{\infty} P\left(\frac{\rho}{v}\right) d v=\rho \int_0^\rho P(\tau) \tau^{-2} d \tau
\end{equation}

As shown in subsection \ref{subsection2.2-uniform rotation minimizers kinetic energy}, the integral definition of $A(\rho)$ helps us obtain (\ref{EL0'}). Due to \ref{F2}, we know $A(\rho)$ is well-defined. Easy to see $A(\rho)$ is strictly increasing. It turns out that $A(\rho)$ is also convex (see Lemma \ref{inc. of A'}), and it is related to the internal energy $U(\rho)$ given in (\ref{U}) below. Moreover, if the polytropic equation of state $P(\rho)=K\rho^\gamma$ holds, where the parameter $\gamma > \frac{4}{3}$,  then easy to check $P$ satisfies \ref{F1} \ref{F2} \ref{F3}, and
\begin{equation}\label{AwithP}
	A(\rho)=\frac{K}{\gamma-1} \rho^\gamma
\end{equation}

\begin{remark}\label{collapse}
	When showing the existence of non-rotating single stars, \ref{F3} can be replaced by the assumption
	\begin{equation}\label{F3'}
		\lim\limits_{\rho \rightarrow \infty} \inf P(\rho) \rho^{-\frac{4}{3}}>K \tag*{(F3')}
	\end{equation}
	for some $K>0$. Usually, $K$ is supposed to be large enough to prevent gravitational collapse, and thereby the Chandrasekhar mass for the model is assumed to be greater than $m=1$. Then we can obtain minimizers with total mass not larger than 1 via variational method. See \cite[Section 2]{McC06} or \cite[Section 1,6,8 and Appendix]{AB71}.
\end{remark}

\begin{remark} \label{assumptions from P to A}
	By L'Hôpital's rule \cite[Theorem 5.13]{Rud76}, and the expression of $A(s)=s \int_0^s P(t) t^{-2} d t$ in (\ref{A}), we can prove that $A$ also satisfies \ref{F2} and \ref{F3} (or \ref{F3'} if $P$ satisfies \ref{F3'} instead of \ref{F3}, though the constant $K$ can be different).
\end{remark}

By (\ref{A}), we can obtain $A'(s)$ satisfies
\begin{equation}\label{A'}
	A^{\prime}(s)=\left\{\begin{array}{r}\int_0^s P(t) t^{-2} d t+\frac{P(s)}{s}, s>0 \\ 0, s=0\end{array}\right.
\end{equation}
Note that $A'(s)$ is continuous and 
\begin{equation}\label{A', A and P relation}
	A^{\prime}(s) s-A(s)= P(s)
\end{equation}

\begin{remark} \label{asymptotic behaviour for A'}
	By (\ref{A'}), we can also prove that $A^{\prime}$ satisfies: $\lim\limits_{\rho \rightarrow 0} A^{\prime}(\rho) \rho^{-\frac{1}{3}}=0$ and $\lim\limits_{\rho \rightarrow \infty} A^{\prime}(\rho) \rho^{-\frac{1}{3}}=\infty$.
\end{remark}


\begin{lemma}[Strictly Increasing of $A^{\prime}$] \label{inc. of A'}
	If $P(\rho)$ satisfies assumptions \ref{F1} \ref{F2} \ref{F3}, then $A'$ is continuous and strictly increasing, where $A$ is given in (\ref{A}). Moreover, $A$ is convex.
\end{lemma}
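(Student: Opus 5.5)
We argue directly from the explicit formula (\ref{A'}). For $s>0$ write
\[
A'(s)=\int_0^s P(t)t^{-2}\,dt+\frac{P(s)}{s}.
\]
\textbf{Well-definedness near $0$.} By \ref{F2}, for small $t$ we have $P(t)\le t^{4/3}$. Hence $P(t)t^{-2}\le t^{-2/3}$, which is integrable near $0$, so the first term is finite. Continuity on $(0,\infty)$ is then clear: the integral is continuous in its upper limit, and $P(s)/s$ is continuous by \ref{F1}. At $s=0$, the bound $P(t)\le \varepsilon t^{4/3}$ on $[0,\delta]$ gives
\[
\int_0^s P t^{-2}\,dt\le 3\varepsilon s^{1/3}\qquad\text{and}\qquad \frac{P(s)}{s}\le \varepsilon s^{1/3},
\]
so $A'(s)\to 0=A'(0)$. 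In the same way, $A(s)=s\int_0^s P(t)t^{-2}\,dt$ is $C^1$ on $[0,\infty)$ with derivative (\ref{A'}). This follows from the fundamental theorem of calculus for $s>0$, and from $A(s)/s\to0$ at $0$.

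\textbf{Strict monotonicity.} The difficulty is that $P(s)/s$ need not be monotone; it may decrease. The idea is to absorb this possible decrease into the integral term, using that $P$ is increasing. Take $0<a<b$. Then
\[
A'(b)-A'(a)=\int_a^b P(t)t^{-2}\,dt+\frac{P(b)}{b}-\frac{P(a)}{a}.
\]
Since $P$ is increasing, $P(t)\ge P(a)$ on $[a,b]$, and so
\[
\int_a^b P(t)t^{-2}\,dt\ \ge\ P(a)\Big(\frac1a-\frac1b\Big).
\]
Substituting gives
\[
A'(b)-A'(a)\ \ge\ \frac{P(b)-P(a)}{b}>0,
\]
where the strict inequality comes from strict monotonicity in \ref{F1}. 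For the case $a=0<b$, we have $A'(b)\ge P(b)/b>P(0)/b\ge 0=A'(0)$, using $P(0)\ge 0$.

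\textbf{Convexity.} A $C^1$ function on $[0,\infty)$ whose derivative is nondecreasing is convex. To see this, for $x<y$ and $\lambda\in(0,1)$, apply the mean value theorem on the two subintervals determined by $z=\lambda x+(1-\lambda)y$ and compare the slopes. Here $A'$ is in fact strictly increasing, so $A$ is strictly convex.

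The only delicate points are the behaviour at $s=0$, which is handled by \ref{F2}, and the lower bound on the integral that compensates for the possibly decreasing term $P(s)/s$. The argument does not use \ref{F3}.
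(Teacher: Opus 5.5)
Your proof is correct and follows the paper's outline. Both arguments write $A'(b)-A'(a)=\int_a^b P(t)t^{-2}\,dt+\frac{P(b)}{b}-\frac{P(a)}{a}$ and use the monotonicity of $P$ on $[a,b]$ to offset the possibly decreasing term $P(s)/s$.

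The difference is how the integral is handled. The paper applies the second mean value theorem for integrals and obtains the exact identity $A'(s+h)-A'(s)=\frac{P(s+h)-P(s)}{s+\xi}$ for some $\xi\in(0,h)$. You use only the pointwise bound $P(t)\ge P(a)$, which gives $A'(b)-A'(a)\ge\frac{P(b)-P(a)}{b}$. Since only positivity is needed, your route is more elementary and self-contained. The paper's identity carries two-sided information, for instance the upper bound $\frac{P(s+h)-P(s)}{s}$, but you could recover that just as easily from $P(t)\le P(b)$.

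You also prove something the paper only asserts. The paper reads off continuity of $A'$ directly from (\ref{A'}) and sets $A'(0)=0$ by definition. You check from \ref{F2} that $P(t)t^{-2}$ is integrable near $0$, that $A'(s)\to 0$ as $s\to0^+$, and that $A$ is differentiable at $0$ with derivative $0$. This is what makes (\ref{A'}) legitimate at $s=0$ and justifies the continuity claim.

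Your convexity step is the same as the paper's, which cites Rudin's exercise for it. You are also right that \ref{F3} is never used.
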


\begin{proof}
	{\rm Due to (\ref{A'}), we can see $A'$ is continuous and $A^{\prime}(s)> A^{\prime}(0)=0$ when $s>0$. Given $s>0$ and $h>0$, by (\ref{A'}) we have 
	$$A^{\prime}(s+h)-A^{\prime}(s)= \int_s^{s+h} P(t) t^{-2} d t+\frac{P(s+h)}{s+h}-\frac{P(s)}{s}$$ Since $P$ is monotonic (strictly increasing), by the second mean value theorem for definite integrals \cite{Hob08}, we know $\exists \xi \in(0, h)$, 
%
	\begin{align*}
		\int_s^{s+h} P(t) t^{-2} \, dt &= P(s) \int_s^{s+\xi} t^{-2} \, dt + P(s+h) \int_{s+\xi}^{s+h} t^{-2} \, dt \\
		&= P(s)\left(\frac{1}{s}-\frac{1}{s+\xi}\right) + P(s+h)\left(\frac{1}{s+\xi}-\frac{1}{s+h}\right)
	\end{align*}
	Then $A^{\prime}(s+h)-A^{\prime}(s)=\frac{P(s+h)-P(s)}{s+\xi}>0$, which means $A^{\prime}$ is strictly increasing on $[0, \infty)$. The convexity of $A$ can refer to \cite[Chapter 5, Exercise 14]{Rud76}}.
\end{proof}

\begin{remark} \label{existence of inverse}
	Thanks to Lemma \ref{inc. of A'}, the inverse function of $A^{\prime}$, denoted by $\left(A^{\prime}\right)^{-1}$ or $\phi$, is well-defined on $[0, \infty)$. Moreover, $\phi=\left(A^{\prime}\right)^{-1}$ is continuous since $A^{\prime}$ is continuous.
\end{remark}


Note $\nabla P(\rho)$ is one term in the Euler-Poisson equation \eqref{EP} (or \eqref{EP'}). To show the existence of $\nabla P(\rho)$, we make an additional assumption of $P(\rho)$:

\begin{enumerate}
	\item[\mylabel{F4}{(F4)}] $P(\rho)$ is continuously differentiable on $[0, \infty)$, and $P^{\prime}(\rho)>0$ if $\rho>0$.
\end{enumerate}

\begin{remark} \label{diff. of inverse}
	
	By \ref{F4}, we know $A^{\prime \prime}(\rho)$ exists and $A^{\prime \prime}(\rho)=\frac{P^{\prime}(\rho)}{\rho} \neq 0$ is continuous if $\rho>0$, which implies $\phi=\left(A^{\prime}\right)^{-1} \in C^1((0, \infty))$ (one can refer to \cite[Theorem 10.4.2]{Tao16}). It turns out this can help to show the differentiability of $\sigma$ and then of $P(\sigma)$ where $\sigma>0$, as we can see in Theorem \ref{Properties of LEM} below.
\end{remark}

Since what we want is the differentiability of $P(\sigma)$ rather than of $\sigma$ to show (\ref{EP'}), another alternative assumption, which is more general but also somewhat technical, is: 
\begin{enumerate}
	\item[\mylabel{F4'}{(F4')}] $P(\rho)$ is continuously differentiable on $[0, \infty)$. If $\rho>0, P(\rho)$ has non-vanishing (first order or higher order) derivative at $\rho$. That is, $\exists n \geq 1$, such that $P^{(n)}(\rho)$ exists and is not 0.
\end{enumerate}
The applications of this more general assumption will be demonstrated later, such as in Lemma \ref{differentiability of composition} and Theorem \ref{Properties of LEM} (vii).

\bigskip
In the following sections, we assume $P(\rho)$ satisfies \ref{F1}\ref{F2}\ref{F3} unless otherwise specified. We will mention \ref{F4} or \ref{F4'} or other assumptions when we want to use them. As a special case, if we assume the polytropic equations of state with index $\gamma>\frac{4}{3}$ holds, one can check the assumptions \ref{F1}\ref{F2}\ref{F3}\ref{F4} or \ref{F1}\ref{F2}\ref{F3}\ref{F4'} above are satisfied automatically.

We consider ``admissible classes'' for $\rho$ and $v$ as the following:
\begin{align}
	R\left(\mathbb{R}^3\right) & :=\left\{\left.\rho \in L^{\frac{4}{3}}\left(\mathbb{R}^3\right) \right\rvert\, \rho \geq 0, \int_{\mathbb{R}^3} \rho\,dx=1\right\} \\
	V\left(\mathbb{R}^3\right) & :=\left\{v: \mathbb{R}^3 \rightarrow \mathbb{R}^3 \mid v \text { is measurable. }\right\}
\end{align}

We can choose units so that the total mass of the fluid is one and the gravitational constant $G=1$. Then, given $\rho$ and $v$ in the sets above, $V_\rho$ represents the \textit{gravitational potential} of the mass density $\rho(x)$:
\begin{equation}\label{V}
	V_\rho(x):=\int_{\mathbb{R}^3}  \frac{\rho(y)}{|y-x|} \,dy
\end{equation}
Moreover, McCann \cite{McC06} considers energy $E(\rho, v)$ consisting of three terms:

\begin{equation}\label{energy}
	E(\rho, v):=U(\rho)-\frac{G(\rho, \rho)}{2}+T(\rho, v) 
\end{equation}

\begin{equation} \label{U}
	U(\rho):=\int_{\mathbb{R}^3}  A(\rho(x)) \,dx
\end{equation}

\begin{equation} \label{G}
	G(\sigma, \rho):=\int_{\mathbb{R}^3}  V_\sigma \rho\,dx=\iint_{\mathbb{R}^3\times \mathbb{R}^3}  \frac{\sigma(x) \rho(y)}{|x-y|} dy dx 
\end{equation}

\begin{equation} \label{T}
	T(\rho, v):=\frac{1}{2} \int_{\mathbb{R}^3} |v|^2 \rho \,dx
\end{equation}

Here $A(\rho)$ is a convex function given in (\ref{A}), and $U(\rho)$ is the \textit{internal energy}. $G$ is viewed as a symmetric bilinear form since this observation can help us to compute the variational derivative in Lemma \ref{diff. of energy} in subsection \ref{subsection5.1-variational derivative}. Then the \textit{gravitational potential energy} (also called \textit{gravitational interaction energy}) is defined as $G(\rho, \rho)$. We call $T(\rho, v)$ the \textit{kinetic energy}.



We can choose a frame of reference in which the \emph{center of mass}
\begin{equation}\label{centerofmass}
	\bar{x}(\rho):=\frac{\int_{\mathbb{R}^3} x \rho(x) \,dx}{\int_{\mathbb{R}^3} \rho(x) \,dx}
\end{equation}
is at rest. We are interested in finding minimum energy configurations subject to constraints of fixed (but small) mass ratio and fixed angular momentum $\boldsymbol{J}$ with respect to the center of mass $\bar{x}(\rho)$. The fluid \textit{angular momentum} is given by $\boldsymbol{J}(\rho, v)$:
\begin{equation} \label{angular momentum}
	\boldsymbol{J}(\rho, v):=\int_{\mathbb{R}^3}(x-\bar{x}(\rho)) \times v \rho(x) \,dx
\end{equation}

We denote by $J_z$ the z-component of $\boldsymbol{J}$, that is, $J_z(\rho, v):=\hat{e}_z \cdot \boldsymbol{J}(\rho, v)$, where $\hat{e}_z=(0,0,1)^T$. For simplicity of notation, we will sometimes use  $J$  to represent $J_z$ and call $J$ the angular momentum of the system when no confusion arises.

Since the z-component of the angular momentum is specified, the moment of inertia $I(\rho)$ of $\rho$ in the direction of $\hat{e}_z$ will be relevant. That is, we define $I(\rho)$ to be \textit{moment of inertia} of $\rho$ in the direction of $\hat{e}_z$:
\begin{equation} \label{Moment of Inertia}
	I(\rho):=\int_{\mathbb{R}^3} \rho r^2(x-\bar{x}(\rho))\,dx=\int_{\mathbb{R}^3} \rho(x)\left( (x_1-\bar{x}(\rho)_1)^2+(x_2-\bar{x}(\rho)_2)^2\right)\,dx
\end{equation}
where $r$ is given in Definition \ref{notations}.

\begin{remark} \label{positive MoI}
	When $\rho$ has positive mass, we have $I(\rho)>0$. If not, $I(\rho)=\int_{\mathbb{R}^3} \rho(x) r^2(x-\bar{x}(\rho))\,dx=0$ implies $\rho(x) r^2(x-\bar{x}(\rho))=0$ almost everywhere in $\mathbb{R}^3$. Since $\mu\left(\left\{x \in \mathbb{R}^3 \mid r^2(x-\bar{x}(\rho))=0\right\}\right)=0$, where $\mu$ denotes the Lebesgue measure, we have $\rho(x)=0$ almost everywhere in $\mathbb{R}^3$, which contradicts the fact that $\rho$ has positive mass.
\end{remark}

A simple case is the non-rotating problem, i.e., $\boldsymbol{J}=0$, with energy $E_0(\rho)=U(\rho)-\frac{G(\rho, \rho)}{2}$.
Properties of non-rotating minimizers are introduced in \cite{Che26R2, McC06, AB71, LY87}.

To ensure that the definition of $E(\rho,v)$ (or $E_J (\rho)$ given later) does not result in a situation of $\infty-\infty$, we hope the gravitational potential energy $G(\rho,\rho)$ to be finite. Thanks to our setup, this is indeed the case, which leads to the following proposition:

\begin{proposition}[Finite Gravitational Interaction Energy]\label{finite gravitational interaction energy}
	Suppose $\rho \in L^{\frac{4}{3}}\left( \mathbb{R}^{3} \right) \cap L^{1}\left( \mathbb{R}^{3} \right)$, with $\left\| \rho \right\|_{L^{\frac{4}{3}}{(\mathbb{R}^{3})}} + \left\| \rho \right\|_{L^{1}{(\mathbb{R}^{3})}} < \lambda$. Then there is a constant $C(\lambda)$ which depends only on $\lambda$, such that $G(\rho,\rho)<C(\lambda)$.
\end{proposition}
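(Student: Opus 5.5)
The plan is to bound $G(\rho,\rho)$ with the Hardy--Littlewood--Sobolev inequality and then pass from $L^{6/5}$ to the given norms by interpolation. In $\mathbb{R}^3$ the kernel $|x-y|^{-1}$ corresponds to $\lambda=1$ in HLS, which gives
$$
G(\rho,\rho)=\iint_{\mathbb{R}^3\times\mathbb{R}^3}\frac{\rho(x)\rho(y)}{|x-y|}\,dx\,dy\le C_{HLS}\,\|\rho\|_{L^{p}(\mathbb{R}^3)}^2
$$
with exponent determined by $\frac1p+\frac1p+\frac13=2$, i.e. $p=\frac65$. Since $1<\frac65<\frac43$, it then suffices to control $\|\rho\|_{L^{6/5}}$ by $\|\rho\|_{L^1}$ and $\|\rho\|_{L^{4/3}}$.

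For the interpolation step, write $\frac{1}{6/5}=\frac{\theta}{1}+\frac{1-\theta}{4/3}$. This gives $\frac56=\theta+\frac34(1-\theta)$, so $\theta=\frac13$. Hölder's inequality, in the form of $L^p$ interpolation, yields
$$
\|\rho\|_{L^{6/5}}\le \|\rho\|_{L^1}^{1/3}\,\|\rho\|_{L^{4/3}}^{2/3}\le \lambda .
$$
Combining this with the HLS bound gives $G(\rho,\rho)\le C_{HLS}\lambda^2$. The hypothesis asks for strict inequality, so I would take $C(\lambda):=C_{HLS}\lambda^2+1$, or note directly that both norms are strictly less than $\lambda$. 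This constant depends only on $\lambda$.

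If one prefers not to cite HLS, there is an elementary alternative. For fixed $x$, split the inner integral into $|x-y|<R$ and $|x-y|\ge R$. Hölder with $\rho\in L^{4/3}$ and $|\cdot|^{-1}\in L^4(B_R)$ fails, because $|y|^{-4}$ is not integrable near $0$ in $\mathbb{R}^3$. So one would need $|y|^{-1}\in L^{q}(B_R)$ for some $q<3$, which requires $\rho\in L^{q'}$ with $q'>\frac32$, and that is not available. For this reason I would use HLS, i.e. the sharp Sobolev-type estimate.

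The main obstacle is only identifying the correct exponent $\frac65$ and checking that it lies between $1$ and $\frac43$ so that interpolation applies. After that, the argument is a direct citation of HLS (e.g. Lieb--Loss, Theorem 4.3) plus the interpolation inequality.
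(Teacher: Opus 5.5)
Your argument is correct and is essentially the paper's. The paper applies HLS in its Riesz-potential form to get $\|V_\rho\|_{L^{12}}\le C\|\rho\|_{L^{4/3}}$, interpolates $\rho$ into $L^{12/11}$ between $L^1$ and $L^{4/3}$, and closes with H\"older; this is the bilinear HLS inequality with the exponent pair $(\frac{12}{11},\frac43)$ in place of your symmetric $(\frac65,\frac65)$, and both routes give the same bound $C\|\rho\|_{L^1}^{2/3}\|\rho\|_{L^{4/3}}^{4/3}$.

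Your side remark is too pessimistic, though: an elementary proof without HLS does exist, and only the pointwise bound on $V_\rho$ fails. Split the kernel at $|x-y|=1$ and use Young's inequality with $|y|^{-1}\mathbf{1}_{\{|y|<1\}}\in L^2(\mathbb{R}^3)$ to bound the near part by $C\|\rho\|_{L^{4/3}}^2$; the far part is at most $\|\rho\|_{L^1}^2$.
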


\begin{proof}
	{
		\rm
		Since $\rho \in L^{\frac{4}{3}}\left(\mathbb{R}^3\right) \cap L^1\left(\mathbb{R}^3\right)$, by Hardy-Littlewood-Sobolev Inequality \cite[Theorem 1.7]{BCD11} we have $V_\rho \in L^{12}\left(\mathbb{R}^3\right)$, with $\left\|V_\rho\right\|_{L^{12}\left(\mathbb{R}^3\right)}<C_1(\lambda)$ for some constant $C_1(\lambda)>0$. By Interpolation Inequality \cite[Section 4.2]{Bre11} we have $\rho \in L^{\frac{12}{11}}\left(\mathbb{R}^3\right)$, with $\|\rho\|_{L^{\frac{12}{11}\left(\mathbb{R}^3\right)}}<C_2(\lambda)$ for some constant $C_2(\lambda)>0$. Finally, by Hölder's inequality we have 
		$$
		G(\rho, \rho)=\int_{\mathbb{R}^3} \rho V_\rho\,dx \leq\|\rho\|_{L^{\frac{12}{11}\left(\mathbb{R}^3\right)}}\left\|V_\rho\right\|_{L^{12}\left(\mathbb{R}^3\right)}<C_1(\lambda)\cdot C_2(\lambda) \coloneq C(\lambda)
		$$ 
	}
\end{proof}

\begin{remark}\label{finite gravitational interaction energy with different objects}
	With similar arguments, we can show a more general result: suppose $\rho \in L^{\frac{4}{3}}\left( \mathbb{R}^{3} \right) \cap L^{1}\left( \mathbb{R}^{3} \right)$ and $\sigma  \in L^{\frac{12}{11}}\left( \mathbb{R}^{3} \right)$, then $G(\rho, \sigma)<\infty$.
\end{remark}

\begin{remark}
	From Proposition \ref{finite gravitational interaction energy}, we see one reason why we require $\rho$ to belong not only to $L^1$ but also to $L^{\frac{4}{3}}$. Furthermore, the rationale behind selecting the specific exponent $\frac{4}{3}$ can also refer to McCann's construction, which is a natural assumption by interpolation inequality \cite[Section 4.2]{Bre11} if we hope $\inf\limits_{\rho \in {R}\left(\mathbb{R}^3\right)} E_0(m \rho)$ to be finite, see \cite[Section 1, 6, and 8]{AB71}, and \cite[Section 2]{McC06}.
\end{remark}

Consider the non-rotating minimizer $\sigma_m$ of $E_0(\rho)$ among configurations of mass $m$, the corresponding minimum energy is finite due to the remark above. For the sake of convenience, we denote them by
\begin{equation}\label{e_0}
	e_0(m):=E_0\left(\sigma_m\right)=\inf _{\rho \in {R}\left(\mathbb{R}^3\right)} E_0(m \rho).
\end{equation}
Sometimes $e_0(1)$ is denoted by $e_0$.

\bigskip
Physically speaking, if quantum mechanical effects are not considered, it is desirable for stars to be compactly supported. Therefore, we also introduce a subset $R_a\left(\mathbb{R}^3\right)$ of $R\left(\mathbb{R}^3\right)$:

\begin{equation}\label{R_a}
	{R}_a\left(\mathbb{R}^3\right):=\left\{\rho \in {R}\left(\mathbb{R}^3\right) \mid \bar{x}(\rho)=a ; \text { spt } \rho \text { is bounded.}\right\}
\end{equation}
Here, the support of $\rho$, denoted by spt $\rho$, is the smallest closed set carrying the full mass of $\rho$. In general, it is easier to consider the case that the density is centered at 0, which is in ${R}_0\left(\mathbb{R}^3\right)$.

We also consider uniform rotation with the angular momentum $\boldsymbol{J}=J\hat{e}_z=(0,0,J)^T$ specified a priori. In this case, the energy is given by $E_J(\rho)$ as follows: 
\begin{equation} \label{energy of UR}
	E_J(\rho)=U(\rho)-\frac{G(\rho, \rho)}{2}+T_J(\rho)
\end{equation}
$$T_J(\rho):=\frac{J^2}{2 I(\rho)}$$

Due to this uniform rotation observation, after fixing a time, we can assume that the components of the 2-body system fall within two disjoint regions $\Omega_m$ and $\Omega_{1-m}$, widely separated relative to $\frac{J^2}{\mu_r^{2}}$, where $\mu_r=m(1-m)$ is their reduced mass. For the planet's mass $m \in(0,1)$ and the star's mass $(1-m)$, we consider $E_J(\rho)$ to be minimized subject to the constraint
\begin{equation} \label{admissible class}
	W_{m,J}\coloneq \left\{\rho(m)=\rho_m+\rho_{1-m} \in R\left(\mathbb{R}^3\right) \mid \int_{\mathbb{R}^3} \rho_m\,dx=m, \text{spt } \rho_m \subset \Omega_m, \text{spt } \rho_{1-m} \subset \Omega_{1-m}\right\}
\end{equation}
where $\Omega_m$ and $\Omega_{1-m}$ are subsets of $\mathbb{R}^3$, which are given in the following. The subscripts $m$ and $J$ in $W_{m,J}$ indicate $\Omega_m$ and $\Omega_{1-m}$ are related to $m$ and $J$.

Fix two points $y_m$ and $y_{1-m}$ in $\mathbb{R}^3$ from the plane $z=0$, which are separated by $\eta=\frac{J^2}{\mu_r^2}$, i.e. $\eta=\left|y_m-y_{1-m}\right|$. The $\Omega_m$ and $\Omega_{1-m}$ are defined as closed balls in $\mathbb{R}^3$ centered at $y_m$ and $y_{1-m}$, whose size and separation scale with $\eta$ as the following:

\begin{equation}\label{domains}
	\begin{aligned}
		\Omega_m&:=\left\{x \in \mathbb{R}^3 \mid | x-y_m |\, \leq \frac{\eta}{4}\right\} \\
		\Omega_{1-m}&:=\left\{x \in \mathbb{R}^3\mid | x-y_{1-m} |\, \leq \frac{\eta}{4}\right\}
	\end{aligned}
\end{equation}

The distance separating $\Omega_m$ and $\Omega_{1-m}$, and the diameter of their union are given by:

\begin{align}
	{dist}\left(\Omega_m, \Omega_{1-m}\right) &=\frac{\eta}{2} \label{dist} \\
	{diam}\left(\Omega_m, \Omega_{1-m}\right) & =\frac{3 \eta}{2} \label{diam}
\end{align}

\begin{remark}
	The reason we set the separation $\eta=\frac{J^2}{\mu_r^2}$ in the definitions above is inspired by the Kepler problem. Given two point masses $m$ and $1-m$, rotating with angular momentum $J>0$ about their fixed center of mass, if we assume their separation is $d$, then the gravitational energy plus the kinetic energy is $-\frac{\mu_r}{d}+\frac{J^2}{2 \mu_r d^2}$, which reaches its minimum at separation $d=\eta$. When $\eta$ is large (which occurs when $\mu_r$ is small or $J$ is large), since the gravitational interaction between two bodies seems small, one can expect a stable, slowly rotating equilibrium to exist in which fluid components with masses $m$ and $1-m$ lie near $y_m$ and $y_{1-m}$. It is indeed true as McCann discussed in \cite[Section 6]{McC06}.
\end{remark}

\begin{remark}\label{supports are in the interior}
	We choose that the radii of $\Omega_m$ and $\Omega_{1-m}$ increase as $\eta$ increases in order to guarantee the supports of constrained minimizers (stars or planets) will fall in the interior of $\Omega_m \cup \Omega_{1-m}$. This is because McCann shows the size of stars or planets will not expand too much as $\eta$ increases. As discussed in \cite[Section 6]{McC06}, it lays the groundwork for discussion on the constrained minimizers being local minimizers (here and in what follows, unless otherwise specified, “local” will always refer to “local under the topology induced by the Wasserstein $L^{\infty}$ distance”). Therefore, the minimizers are solutions to (\ref{EP'}) due to Theorem \ref{Properties of LEM}.
\end{remark}

For prescribed angular momentum, we also know the energy $E(\rho, v)$ is bounded from below on ${R}_0\left(\mathbb{R}^3\right)$ by the non-rotating energy $e_0$. However, as in Morgan \cite{Mor02}, McCann \cite[Example 3.6]{McC06} demonstrates that this bound --- although approached --- will not be attained (see also \cite{McC94}). We clarify this phenomenon in the following remark:

\begin{remark}\label{global energy minimum approached but not attained}
	(i) Non-attainability: By the definition of $E_J(\rho)$, we know $\forall \rho \in {R}\left(\mathbb{R}^3\right), E_J(\rho) \geq E_0(\rho) \geq e_0(1)=e_0$. But $e_0$ cannot be attained by $E_J(\rho)$ if $J>0$. This is because $E_J(\rho)=E_0(\rho)+T_J(\rho)$. If $E_0(\rho)=e_0$, then $\rho$ is a minimizer of $E_0$ and hence has compact support \cite[Theorem 3.5 (v)]{McC06}. It makes $I(\rho)>0$ thus $T_J(\rho)>0$, $E_J(\rho)>e_0$; if $E_0(\rho)>e_0$, then, of course, $E_J(\rho)>e_0$.
	
	(ii) Approachability: It turns out $e_0$ is a strictly concave function of the mass $m$ \cite[Theorem 3.5 (ii)]{McC06}. Hence, one can construct $\rho \in R_0\left({R}^3\right)$ such that $E_J(\rho)-e_0<\epsilon$ where $\epsilon$ is arbitrarily small, and the test function can be chosen as $\rho(x)=\sigma_m(x)+\sigma_{1-m}(x-y)$ with $|y|$ sufficiently large, where $\sigma_m$ and $\sigma_{1-m}$ denote the non-rotating minimizers with mass $m$ and $1-m$ respectively. The idea comes from Lieb, Morgan, and McCann \cite{LM, McC94, Mor02, McC06}. 
%
\end{remark}
	%
Therefore, instead of searching for a global energy minimizer in binary-star system, one is forced to settle for local minimizers of $E(\rho, v)$ in an appropriate topology.

In fact, the velocity field $v$ may be topologized in any way which makes ${V}\left(\mathbb{R}^3\right)$ a topological vector space, since the velocity distribution minimizing the kinetic energy is always given by uniform rotation \cite[Section 3]{McC06} \cite [Lemma 2.2.2]{JS22} as we will describe in subsection \ref{subsection2.2-uniform rotation minimizers kinetic energy}. However, the choice of topology for ${R}_0\left(\mathbb{R}^3\right)$ is quite delicate: McCann pointed out local energy minimizer will not exist if the topology of $R_0(\mathbb{R}^3)$ is inherited from a topological vector space (\cite[Remark 3.7]{McC06}), i.e. ${R}_0\left(\mathbb{R}^3\right)$ is a subset of a topological vector space (see \cite[Chapter 1, Section 16]{Mun00}). While we partially acknowledge this perspective, we argue that the focus should be on $R(\mathbb{R}^3)$ rather than on $R_0(\mathbb{R}^3)$. For further details, one can refer to Proposition \ref{no local minimizers with finite energy in a vector space topology} and Remark \ref{R instead R_0} in subsection \ref{subsection5.2-locally constant chemical potential}.  We will further compare this case with the case that the topology induced by the Wasserstein $L^\infty$ distance, to highlight the advantages of considering the topology induced by the Wasserstein $L^\infty$ distance in such problems.

Since the star and planet are separated, it is convenient to describe the relations between the total moment of inertia $I\left(\rho_m+\rho_{1-m}\right)$ and $I\left(\rho_m\right), I\left(\rho_{1-m}\right)$.

\begin{lemma}[Expansion of Moment of Inertia]\label{expansion of MoI}
	Let $\rho\geq 0$, $\sigma \geq 0$ be the density functions in $\mathbb{R}^3$ with mass $\int_{\mathbb{R}^3} \rho\,dx=m_1<\infty, \int_{\mathbb{R}^3} \sigma\,dx=m_2<\infty$. $\bar{x}(\rho)$ and $\bar{x}(\sigma)$ denote the centers of mass, $I(\rho)$ and $I(\sigma)$ denote the moments of inertia, function $r$ is given in (\ref{notations}).
	
	\begin{enumerate}[(1)]
		\item If $m_1+m_2=0$, then $I(\rho+\sigma)=0$.
		\item If $m_1+m_2>0$, then we have the moment of inertia of $\rho+\sigma$ satisfies
		\begin{equation} \label{Expansion of MoI}
			I(\rho+\sigma)=I(\rho)+I(\sigma)+\frac{m_1 m_2}{m_1+m_2} r^2(\bar{x}(\rho)-\bar{x}(\sigma))
		\end{equation}
	\end{enumerate}
\end{lemma}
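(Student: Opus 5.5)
Case (1) is immediate. If $m_1+m_2=0$, then since $\rho,\sigma\ge 0$ we get $\rho=\sigma=0$ almost everywhere, so $\rho+\sigma=0$ a.e. Whatever convention is used for the center of mass (for instance $\bar{x}=0$ when the mass vanishes), the integrand in (\ref{Moment of Inertia}) vanishes a.e., hence $I(\rho+\sigma)=0$. I will note that $\bar{x}(\rho+\sigma)$ is formally $0/0$ here, but the integral is zero under any choice.

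For case (2), the plan is a parallel-axis (Huygens--Steiner) computation restricted to the first two coordinates. Write $M=m_1+m_2>0$, $a=\bar{x}(\rho)$, $b=\bar{x}(\sigma)$, and $c=\bar{x}(\rho+\sigma)$. By linearity of (\ref{centerofmass}), $Mc=m_1a+m_2b$. This is valid when $m_1,m_2>0$. If one mass is zero, that density vanishes a.e., its moment of inertia and its contribution to $c$ are zero, and the last term in (\ref{Expansion of MoI}) has coefficient $0$, so the formula holds trivially. I will treat that degenerate subcase first and then assume $m_1,m_2>0$.

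The key identity is the following. For any density $f\ge0$ with mass $m_f>0$, center $\bar{x}(f)$, and any point $p$,
\begin{equation*}
\int_{\mathbb{R}^3} f(x)\, r^2(x-p)\,dx = I(f) + m_f\, r^2(\bar{x}(f)-p).
\end{equation*}
To prove it, expand $r^2(x-p)=r^2(x-\bar{x}(f))+2\langle x-\bar{x}(f),\bar{x}(f)-p\rangle_2+r^2(\bar{x}(f)-p)$ using the bilinear form $\langle\cdot,\cdot\rangle_2$ of Definition \ref{notations}. The cross term integrates to zero because $\int f(x)(x-\bar{x}(f))\,dx=0$ by (\ref{centerofmass}). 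Since $I(\rho+\sigma)=\int\rho\, r^2(x-c)\,dx+\int\sigma\, r^2(x-c)\,dx$, applying the identity with $p=c$ to $f=\rho$ and $f=\sigma$ gives
\begin{equation*}
I(\rho+\sigma)=I(\rho)+I(\sigma)+m_1 r^2(a-c)+m_2 r^2(b-c).
\end{equation*}
Finally, $a-c=\frac{m_2}{M}(a-b)$ and $b-c=-\frac{m_1}{M}(a-b)$. Since $r^2$ is quadratic, $r^2(\lambda v)=\lambda^2 r^2(v)$, and therefore
\begin{equation*}
m_1\frac{m_2^2}{M^2}r^2(a-b)+m_2\frac{m_1^2}{M^2}r^2(a-b)=\frac{m_1m_2}{M}\,r^2(a-b),
\end{equation*}
which is (\ref{Expansion of MoI}).

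The main obstacle is integrability rather than algebra. The expansion and the cancellation of the cross term require $\int f|x|\,dx<\infty$, so that $\bar{x}(f)$ is defined, and they are cleanest when the second moments are finite. I would first handle the case where $\int(\rho+\sigma)r^2(x)\,dx<\infty$, in which every term is finite and the manipulations are justified by linearity of the integral. If the second moment is infinite but the first moments are finite, both sides equal $+\infty$: the left side dominates a shifted version of each $I(\rho)$ and $I(\sigma)$ up to finite corrections. So the identity still holds in $[0,\infty]$.
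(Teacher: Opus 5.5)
Your proposal is correct and follows essentially the same route as the paper. Both arguments expand each density's contribution about the combined center of mass, kill the cross term using the definition of $\bar{x}$, and substitute $\bar{x}(\rho)-\bar{x}(\rho+\sigma)=\frac{m_2}{m_1+m_2}(\bar{x}(\rho)-\bar{x}(\sigma))$ (and its counterpart for $\sigma$). Your separate handling of the zero-mass subcase and of infinite second moments is slightly more careful than the paper, which mentions the latter only in a remark after the proof.
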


\begin{proof}		
	{\rm The case (1) is obvious, since $m_1+m_2=0$ implies $\rho+\sigma$ is 0 almost everywhere. For the case (2), by definition $\left(m_1+m_2\right) \bar{x}(\rho+\sigma)=\int_{\mathbb{R}^3}(\rho+\sigma) x\,dx=m_1 \bar{x}(\rho)+ m_2 \bar{x}(\sigma)$, then
		$$
		\begin{aligned}
			I(\rho+\sigma)&=\int_{\mathbb{R}^3}(\rho+\sigma) r^2(x-\bar{x}(\rho+\sigma))\,dx \\
			& =\int_{\mathbb{R}^3} \rho r^2(x-\bar{x}(\rho)+\bar{x}(\rho)-\bar{x}(\rho+\sigma))\,dx+\int_{\mathbb{R}^3} \sigma r^2(x-\bar{x}(\sigma)+\bar{x}(\sigma)-\bar{x}(\rho+\sigma))\,dx
		\end{aligned}
		$$
		Consider the first term on the right above, we have
		$$
		\begin{aligned}
			\int_{\mathbb{R}^3} \rho r^2(x-\bar{x}(\rho)+\bar{x}(\rho)-\bar{x}(\rho+\sigma))\,dx =&\int_{\mathbb{R}^3} \rho r^2(x-\bar{x}(\rho))\,dx+\int_{\mathbb{R}^3} \rho r^2(\bar{x}(\rho)-\bar{x}(\rho+\sigma))\,dx \\
			& +2 \int_{\mathbb{R}^3} \rho\langle x-\bar{x}(\rho), \bar{x}(\rho)-\bar{x}(\rho+\sigma)\rangle _2\,dx
		\end{aligned}
		$$
		Notice by the definition of the center of mass, we have 
		$$\int_{\mathbb{R}^3} \rho \langle  x, \bar{x}(\rho)-\bar{x}(\rho+\sigma)\rangle_2\,dx=m_1\langle\bar{x}(\rho), \bar{x}(\rho)-\bar{x}(\rho+\sigma)\rangle_2=\int_{\mathbb{R}^3} \rho\langle\bar{x}(\rho), \bar{x}(\rho)-\bar{x}(\rho+\sigma)\rangle_2\,dx$$
		Thus $2 \int_{\mathbb{R}^3} \rho\langle x-\bar{x}(\rho), \bar{x}(\rho)-\bar{x}(\rho+\sigma)\rangle_2\,dx=0$, therefore,
		$$
		\begin{aligned}
			\int_{\mathbb{R}^3} \rho r^2(x-\bar{x}(\rho)+\bar{x}(\rho)-\bar{x}(\rho+\sigma))\,dx & =I(\rho)+m_1 r^2\left(\frac{m_2}{m_1+m_2}(\bar{x}(\rho)-\bar{x}(\sigma))\right) \\
			& =I(\rho)+\frac{m_1 \cdot m_2^2}{\left(m_1+m_2\right)^2} r^2(\bar{x}(\rho)-\bar{x}(\sigma))
		\end{aligned}
		$$
		Similarly
		$$
		\int_{\mathbb{R}^3} \sigma r^2(x-\bar{x}(\sigma)+\bar{x}(\sigma)-\bar{x}(\rho+\sigma))\,dx=I(\sigma)+\frac{m_2 \cdot m_1^2}{\left(m_1+m_2\right)^2} r^2(\bar{x}(\rho)-\bar{x}(\sigma))
		$$
		Thus
		$$
		\begin{aligned}
			I(\rho+\sigma)&=I(\rho)+\frac{m_1 \cdot m_2^2}{\left(m_1+m_2\right)^2} r^2(\bar{x}(\rho)-\bar{x}(\sigma))+I(\sigma) +\frac{m_2 \cdot m_1^2}{\left(m_1+m_2\right)^2} r^2(\bar{x}(\rho)-\bar{x}(\sigma)) \\
			&=I(\rho)+I(\sigma)+\frac{m_1 m_2}{m_1+m_2} r^2(\bar{x}(\rho)-\bar{x}(\sigma))
		\end{aligned}
		$$
		This is what we want.}
\end{proof}

\begin{remark}
	Physically it's natural to assume those mass, center of mass, and moment of inertia are finite. However, one may notice as long as the mass $m_1$ and $m_2$ are finite and one of the centers of mass is finite, the result can still hold even though the values may be infinite.
\end{remark}

\subsection{Uniform rotation minimizes kinetic energy}\label{subsection2.2-uniform rotation minimizers kinetic energy}

This subsection reviews several results of McCann \cite{McC06}, Jang and Seok \cite{JS22}. First, we mention that the problem of minimizing $E(\rho, v)$ is equivalent to a minimization problem of the energy $E_J(\rho)$ with respect to uniform rotation, that is, the fluid rotates uniformly about its center of mass. Then we show that with uniform rotation setting, the conservation of mass $\partial_t \rho+\nabla(\rho v)=0$ automatically holds true, and then the Euler-Poisson equations (\ref{EP}) can be simplified to (\ref{EP'}) as Jang and Seok describe \cite{JS22}.

When we define the total energy $E(\rho, v)$, we can notice that the velocity field $v$ only appears in the kinetic energy $T(\rho, v)$. Therefore, we can first consider the velocity minimizer of $T(\rho, v)$ with a given density $\rho \in {R}_0\left(\mathbb{R}^3\right)$. McCann points out that $T(\rho,v)$ is uniquely minimized by a uniform rotation:

\begin{proposition}[Uniform Rotation around Center of Mass {\cite{LM} \cite[Proposition 3.1]{McC06}}] \label{uniform rotation}
	Fix a fluid density $\rho \in {R}_0\left(\mathbb{R}^3\right)$ and $J \geq 0$. Among all velocities $v \in {V}\left(\mathbb{R}^3\right)$ for which $T(\rho, v)<\infty$ and satisfying the constraint ${J}_z(\rho, v)=J$, the kinetic energy $T(\rho, v)$ is uniquely minimized by a uniform rotation $v(x):=\omega \hat{e}_z \times x$ with angular velocity $\omega:=\frac{J}{I(\rho)}$.
\end{proposition}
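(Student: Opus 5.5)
The plan is a pointwise Cauchy--Schwarz estimate. Since $\rho\in R_0(\mathbb{R}^3)$, we have $\bar{x}(\rho)=0$, so $J_z(\rho,v)=\int_{\mathbb{R}^3}\hat{e}_z\cdot(x\times v)\,\rho\,dx$. For any $x$ we have $\hat{e}_z\cdot(x\times v)=x_1v_2-x_2v_1=(\hat{e}_z\times x)\cdot v$, and the vector $w(x):=\hat{e}_z\times x=(-x_2,x_1,0)$ satisfies $|w(x)|=r(x)$. The constraint therefore reads
\[
J=\int_{\mathbb{R}^3}\langle w,v\rangle\,\rho\,dx .
\]
This is a linear functional in the weighted space $L^2(\rho\,dx;\mathbb{R}^3)$, with $T(\rho,v)=\tfrac12\|v\|^2_{L^2(\rho)}$.

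First I check that everything is finite. Since $\operatorname{spt}\rho$ is bounded, $I(\rho)=\|w\|_{L^2(\rho)}^2<\infty$. By Remark \ref{positive MoI}, $I(\rho)>0$. If $T(\rho,v)<\infty$, then Cauchy--Schwarz shows the integral defining $J_z$ converges absolutely, so the constraint makes sense. Applying Cauchy--Schwarz in $L^2(\rho)$ gives
\[
J\le \|w\|_{L^2(\rho)}\|v\|_{L^2(\rho)}=\sqrt{I(\rho)}\,\sqrt{2T(\rho,v)},
\]
and hence $T(\rho,v)\ge J^2/(2I(\rho))$. For $v=\omega w$ with $\omega=J/I(\rho)$, a direct computation gives $J_z=\omega I(\rho)=J$ and $T=\tfrac12\omega^2 I(\rho)=J^2/(2I(\rho))$. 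So the bound is attained, and $T_J(\rho)$ is indeed the minimum. The case $J=0$ is trivial: $v=0$ is the minimizer, with $\omega=0$.

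For uniqueness I use the equality case rather than just the inequality. Equality in Cauchy--Schwarz forces $v=c\,w$ $\rho$-a.e. for some constant $c\ge 0$; this holds when $J>0$. The case $J=0$ forces $\|v\|_{L^2(\rho)}=0$. The constraint then gives $c=J/I(\rho)$. A cleaner way to see this is to expand
\[
\tfrac12\|v\|^2=\tfrac12\|\omega w\|^2+\langle \omega w,v-\omega w\rangle+\tfrac12\|v-\omega w\|^2 .
\]
The cross term equals $\omega(J-\omega I(\rho))=0$, so $T(\rho,v)-T(\rho,\omega w)=\tfrac12\int|v-\omega w|^2\rho\,dx$, which vanishes only if $v=\omega w$ $\rho$-a.e.

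The main subtlety is the sense of ``unique''. Since $T$ and $J_z$ depend on $v$ only on $\{\rho>0\}$, uniqueness can only hold $\rho$-a.e., that is, as elements of $L^2(\rho\,dx)$. I will state this explicitly and identify velocities that agree on $\{\rho>0\}$. One also needs $\bar{x}(\rho)=0$ to write the rotation about the origin; otherwise $x$ is replaced by $x-\bar{x}(\rho)$.
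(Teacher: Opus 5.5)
Your proof is correct and takes essentially the same route as the paper: both work in the Hilbert space $L^2(\rho\,dx)$ and rewrite the constraint as $\langle v,\hat{e}_z\times x\rangle = J$. Both then get minimality and uniqueness from the orthogonal splitting $v=\omega(\hat{e}_z\times x)+\bigl(v-\omega(\hat{e}_z\times x)\bigr)$ with vanishing cross term (the paper's $v=v_g+v_o$ and Pythagoras); your remarks that uniqueness holds only $\rho$-a.e.\ and that $T(\rho,v)<\infty$ makes the constraint integral absolutely convergent make explicit details the paper leaves implicit.
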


\begin{proof}
	{
		\rm
		Given $\rho$ in ${R}_0\left(\mathbb{R}^3\right)$ fixed, we define the Hilbert space $H \coloneq L^{2}\left( {\mathbb{R}^{3},d\rho(x)} \right) \subset V\left( \mathbb{R}^{3} \right)$, where $d\rho(x) = \rho(x)dx$, with inner product $< v,w >_{H} \coloneq \int_{\mathbb{R}^3}(v\cdot w) \rho\,dx$, in particular $< v,v >_{H} = 2T(\rho,v)$. One can check: (1) $\hat{e_{z}} \times x \in H$; (2) if $v \in {V}\left(\mathbb{R}^3\right)$ satisfies $T(\rho, v)<\infty$ and the constraint ${J}_z(\rho, v)=J$, then it is in the space $G = \left\{ v \in H \middle| < v,~\hat{e_{z}} \times x >_{H} = J \right\}$; (3) if $v$ is in $G$, then $v$ can be expressed as $v = v_{g} + v_{o}$, where $v_{g} =\frac{J}{I(\rho)} \left( \hat{e_{z}} \times x \right)= \omega\left( \hat{e_{z}} \times x \right)$ and $v_o$ is orthogonal to $\hat{e_{z}} \times x$ in $H$, that is $< v_{0},\hat{e_{z}} \times x >_{H} = 0$. Therefore, 
		
		$$
		\begin{aligned}
			< v,v >_{H} &= < v_{g},v_{g} >_{H} + < v_{0},v_{0} >_{H} \\
			&\geq < v_{g},v_{g} >_{H} \\
			&= 2T\left( \rho,v_{g} \right)
		\end{aligned}
		$$
		For the detailed proof, one can refer to \cite[Proposition 3.1]{McC06}. 
	}
\end{proof}

\vspace*{0.8 em}
If $\rho \in {R}_0\left(\mathbb{R}^3\right)$ rotates with velocity $v(x):=\frac{J}{I(\rho)} \hat{e}_z \times x$, then it is easy to compute that its kinetic energy $T(\rho, v)$ is given by
\begin{equation} \label{T_J}
	T_J(\rho):=\frac{J^2}{2 I(\rho)}
\end{equation}

Thanks to Remark \ref{positive MoI}, we know $T_J(\rho)=\frac{J^2}{2 I(\rho)}$ is well-defined and finite if the mass of $\rho$ is not zero.

In fact, McCann obtained the following corollary, which, to some extent, strengthens Proposition \ref{uniform rotation}. 

\begin{corollary}[Velocity-free Reformulation {\cite[Secion 3]{McC06}}] \label{velocity-free}
	Let $J \geq 0$ and $\rho \in {R}\left(\mathbb{R}^3\right)$, and define $\omega:=\frac{J}{I(\rho)}$. Let $V(\mathbb{R}^3)$ be a topological vector space, and let the topology on ${R}_0\left(\mathbb{R}^3\right)$ be such that the map taking $\rho$ to $I(\rho)$ is continuous. Then $(\rho, v)$ minimizes $E(\rho, v)$ locally on ${R}_0\left(\mathbb{R}^3\right) \times {V}\left(\mathbb{R}^3\right)$ subject to the constraint $J_z(\rho, v)=J$ if and only if $\rho$ minimizes $E_J(\rho)$ locally on ${R}_0\left(\mathbb{R}^3\right)$ and $v(x)=\omega \hat{e}_z \times x$. Here $E_J(\rho)=U(\rho)-\frac{G(\rho, \rho)}{2}+T_J(\rho)$.
\end{corollary}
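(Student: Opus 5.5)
The plan is to separate the minimization over $v$ from the minimization over $\rho$, using Proposition \ref{uniform rotation}. The key identity is
\[
\inf\{E(\rho,v) : v\in V(\mathbb{R}^3),\ J_z(\rho,v)=J\} = E_J(\rho),
\]
for each $\rho\in R_0(\mathbb{R}^3)$. The infimum is attained exactly at $v_\rho(x)=\frac{J}{I(\rho)}\hat e_z\times x$. This holds because $U$ and $G$ do not depend on $v$, and Proposition \ref{uniform rotation} together with (\ref{T_J}) gives $T(\rho,v)\ge T_J(\rho)$, with equality only for $v=v_\rho$. Velocities with $T(\rho,v)=\infty$ have infinite energy, so they are irrelevant. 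Since $\rho\in R_0(\mathbb{R}^3)$ has $\bar x(\rho)=0$, we also have $x-\bar x(\rho)=x$ in (\ref{angular momentum}) and (\ref{Moment of Inertia}), so the constraint and the rotation formula match.

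($\Leftarrow$) Suppose $\rho_0$ minimizes $E_J$ on a neighborhood $N$ of $\rho_0$ in $R_0(\mathbb{R}^3)$, and set $v_0=\omega\hat e_z\times x$. Take any $(\rho,v)\in N\times V(\mathbb{R}^3)$ with $J_z(\rho,v)=J$. Then
\[
E(\rho,v)\ge E_J(\rho)\ge E_J(\rho_0)=E(\rho_0,v_0).
\]
Hence $(\rho_0,v_0)$ is a local constrained minimizer on the product neighborhood $N\times V(\mathbb{R}^3)$. This direction needs no continuity of $I$.

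($\Rightarrow$) Suppose $(\rho_0,v_0)$ minimizes $E$ on $N\times O$ among constrained pairs, where $O$ is a neighborhood of $v_0$ (any neighborhood contains such a product). First, $v_0$ must equal $v_{\rho_0}$. Otherwise consider the segment $v_t=(1-t)v_0+tv_{\rho_0}$. It satisfies the constraint because $J_z(\rho_0,\cdot)$ is linear. It lies in $O$ for small $t$ because scalar multiplication and addition are continuous in a topological vector space. Finally, $T(\rho_0,v_t)<T(\rho_0,v_0)$ for $t>0$, by strict convexity of the quadratic form $T(\rho_0,\cdot)$ together with uniqueness of the minimizer. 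This contradicts local minimality. Hence $v_0=\omega_0\hat e_z\times x$ with $\omega_0=J/I(\rho_0)$.

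Next, we show $\rho_0$ locally minimizes $E_J$. For $\rho$ near $\rho_0$, compare with $v_\rho$. This requires $v_\rho\in O$, and this is where the continuity of $I$ enters, which I expect to be the main obstacle. The map $\rho\mapsto J/I(\rho)$ is continuous at $\rho_0$, because $I(\rho_0)>0$ by Remark \ref{positive MoI}. The map $s\mapsto s\,\hat e_z\times x$ from $\mathbb{R}$ into the topological vector space $V(\mathbb{R}^3)$ is continuous. Composing, $\rho\mapsto v_\rho$ is continuous at $\rho_0$. So there is a neighborhood $N'\subset N$ of $\rho_0$ with $v_\rho\in O$ for all $\rho\in N'$. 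For such $\rho$,
\[
E_J(\rho)=E(\rho,v_\rho)\ge E(\rho_0,v_0)=E_J(\rho_0),
\]
which completes the proof.

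One detail needs care: the velocity-dependent terms should be evaluated with $\rho\in R_0(\mathbb{R}^3)$, so that the center of mass is at the origin. Otherwise one rotates about $\bar x(\rho)$, and the same argument applies verbatim after translation.
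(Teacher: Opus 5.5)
The paper gives no proof of this corollary; it cites McCann. Your route is the natural one: split off the velocity with Proposition~\ref{uniform rotation}, pin down $v_0$ by a segment in $V(\mathbb{R}^3)$, then use continuity of $\rho\mapsto J/I(\rho)$ to build competitors $v_\rho$. Your $(\Leftarrow)$ direction is complete.

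In $(\Rightarrow)$, however, the step ``hence $v_0=\omega_0\hat e_z\times x$'' is not justified. $T(\rho_0,\cdot)$ is not strictly convex on $V(\mathbb{R}^3)$: it only sees $v$ on $\{\rho_0>0\}$. Likewise, the uniqueness in Proposition~\ref{uniform rotation} is uniqueness in $L^2(\rho_0\,dx)$. If $v_0-v_{\rho_0}$ vanishes $\rho_0$-a.e.\ but not identically, then $T(\rho_0,v_t)=T(\rho_0,v_0)$ for all $t$, and no contradiction arises. In fact your own $(\Leftarrow)$ chain shows this: when $\rho_0$ locally minimizes $E_J$, the pair $(\rho_0,v_{\rho_0}+b)$ is a constrained local minimizer for every measurable $b$ vanishing on $\operatorname{spt}\rho_0$. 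So exact equality is false, and only $v_0=\omega_0\hat e_z\times x$ $\rho_0$-a.e.\ can be obtained.

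Even that weaker conclusion requires $E(\rho_0,v_0)<\infty$. If $U(\rho_0)=\infty$, then $E(\rho_0,\cdot)\equiv\infty$. If $T(\rho_0,v_0)=\infty$, then $T(\rho_0,v_t)=\infty$ for every $t<1$. In either case the segment gives no strict decrease.

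This matters for your second step. You need $v_\rho\in O$ for $\rho$ near $\rho_0$, which uses $v_{\rho_0}\in O$. But $O$ is only a neighbourhood of $v_0$, and if $v_0\neq v_{\rho_0}$ off $\operatorname{spt}\rho_0$, a small $O$ need not contain $v_{\rho_0}$. The competitor that does lie in $O$ is $v_0+\bigl(J/I(\rho)-J/I(\rho_0)\bigr)\hat e_z\times x$. It coincides with $v_\rho$ $\rho$-a.e.\ only if $\rho$ carries no mass where $v_0\neq v_{\rho_0}$. Nearby densities need not satisfy this; in $W^\infty$, mass may spill slightly outside $\operatorname{spt}\rho_0$. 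For such $\rho$, this competitor satisfies neither the constraint $J_z(\rho,\cdot)=J$ nor $T(\rho,\cdot)=T_J(\rho)$ in general.

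So what you have actually proved is:
\begin{enumerate}[(a)]
\item the $(\Leftarrow)$ direction;
\item every finite-energy constrained local minimizer has $v_0=\omega_0\hat e_z\times x$ $\rho_0$-a.e.;
\item $\rho_0$ locally minimizes $E_J$ provided $v_{\rho_0}\in O$, e.g.\ when $v_0$ is exactly the rigid rotation.
\end{enumerate}
To close the gap, add the finite-energy hypothesis. Then either restrict the ``only if'' part to pairs whose velocity is exactly $\omega\hat e_z\times x$, or reformulate so that $v$ only matters on $\{\rho>0\}$. These imprecisions are already latent in the statement, and in the word ``uniquely'' in Proposition~\ref{uniform rotation}. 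Still, your argument passes over them rather than handling them.
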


\begin{remark}
	By Lemma \ref{properties of Wasser.} (v), we know if ${R}_0\left(\mathbb{R}^3\right)$ is endowed with the topology induced by the Wasserstein $L^\infty$ distance, then the topology satisfies the conditions required in the above corollary, namely, that the mapping $\rho \mapsto I(\rho)$ is continuous.
\end{remark}


\begin{remark}
	In the arguments above, we only consider minimization on ${R}_0\left(\mathbb{R}^3\right)$ or ${R}_0\left(\mathbb{R}^3\right) \times {V}\left(\mathbb{R}^3\right)$. By translation invariance of energy we can see the results also hold over ${R}_a\left(\mathbb{R}^3\right)$ or ${R}_a\left(\mathbb{R}^3\right) \times {V}\left(\mathbb{R}^3\right)$ for any $a\in \mathbb{R}^3$, and furthermore, it can actually be extended to minimization over ${R}\left(\mathbb{R}^3\right)$ or ${R}\left(\mathbb{R}^3\right) \times {V}\left(\mathbb{R}^3\right)$. 
	
\end{remark}

In summary, the local minimization in ${R}_0\left(\mathbb{R}^3\right) \times {V}\left(\mathbb{R}^3\right)$ of $E(\rho, v)$ is equivalent to the local minimization over ${R}_0\left(\mathbb{R}^3\right)$ of $E_J(\rho)$. Note the internal energy and gravitational potential energy are rotation‑invariant. Under the constraint $J_z(\rho,v)=J$, when we know $\widetilde{\rho}$ is a local minimizer over ${R}_0\left(\mathbb{R}^3\right)$ of $E_J(\rho)$, we expect—on physical grounds—that the celestial bodies should be rotating rather than stationary. Therefore, we can further define
$$
({\rho}(t, x), v(t, x)):=\left(\widetilde{\rho}\left(R_{-w t} x\right), \omega\left(-x_2, x_1, 0\right)^T\right)
$$
where $\omega=\frac{J}{I\left(\rho\right)}$, then $({\rho}(t, x), v(t, x))$ gives a uniform rotating binary star system (or star-planet system) such that $E(\rho,v)$ is the local minimum.

Now we have understood one motivation of defining $E_J$. Actually, $E_J$ can be derived through another approach, inspired by Jang and Seok \cite{JS22}.

To find rotating $N$-body solutions to the Euler-Poisson system, we may assume $\rho(t, y)$, $v(t, y)$, and $V(t, y)$ have the following forms:
$$
\rho(t, y)=\widetilde{\rho}\left(R_{-\omega t} y\right), v(t, y)=\omega\left(-y_2, y_1, 0\right)^T, V(t, y)=V_{\widetilde{\rho}}\left(R_{-\omega t} y\right)
$$
where $\omega>0$ is angular velocity, $\widetilde{\rho}$ is a nonnegative density function with compact support. Then we can see the Poisson equation is satisfied automatically. For the conservation of mass, we compute:
\begin{equation*}
	\begin{aligned}
		\partial_t \rho &=\partial_{y_1} \widetilde{\rho} \cdot\left(-y_1 \sin \omega t \omega+y_2 \cos \omega t \omega\right)+\partial_{y_2} \widetilde{\rho} \cdot\left(-y_1 \cos \omega t \omega-y_2 \sin \omega t \omega\right) \\
		\nabla(\rho v) &=-\omega y_2\left(\partial_{y_1} \widetilde{\rho} \cos \omega t-\partial_{y_2} \widetilde{\rho} \sin \omega t\right)+\omega y_1\left(\partial_{y_2} \widetilde{\rho} \cos \omega t+\partial_{y_1} \widetilde{\rho} \sin \omega t\right)
	\end{aligned}
\end{equation*}

Thus $\partial_t \rho+\nabla(\rho v)=0$, the conservation of mass holds true.

We recall $P(\rho)$ denotes the pressure where $\rho$ is a density function, and $P_{12}(y)$ denotes the projection operator, where $y \in \mathbb{R}^3$. For the Euler (momentum) equation, we know it becomes
\begin{equation}\label{rhotilde to rho}
	-\omega^2 \widetilde{\rho}\left(R_{-\omega t} y\right) P_{12}(y)+R_{-\omega t}^T\left(\nabla_{\widetilde{x}} P\left(\widetilde{\rho}\left(R_{-\omega t} y\right)\right)\right)-\widetilde{\rho}\left(R_{-\omega t} y\right) R_{-\omega t}^T\left(\nabla_y V_p\left(R_{-\omega t} y\right)\right)=0
\end{equation}
where $\nabla_y$ means the gradient operator w.r.t. $y$, and $\nabla_{\widetilde{x}}$ means the gradient operator w.r.t. $\widetilde{x}$, where $\widetilde{x}=R_{-\omega t} y$ above.

By the change of variable $x=R_{-\omega t} y$, we have $P_{12}(x)=R_{-\omega t} P_{12}(y), P_{12}(y)=R_{-\omega t}^{-1} P_{12}(x)=R_{-\omega t}^T P_{12}(x)$, then we get the equation introduced in the Introduction:

\begin{equation}
	-\omega^2 \widetilde{\rho}(x) P_{12}(x)+\nabla P(\widetilde{\rho}(x))-\widetilde{\rho}(x)\left(\nabla V_{\widetilde{p}}(x)\right)=0 \tag{EP'}
\end{equation}
where $\nabla$ means the gradient operator w.r.t. $x$ by default. We call (\ref{EP'}) the reduced Euler-Poisson equation. One can also reverse this process to demonstrate that (\ref{EP'}) can imply (\ref{EP}) as well.

If we further assume $P^{\prime}$ and $\nabla \widetilde{\rho}$ exist, then (\ref{EP'}) becomes

\begin{equation}\label{3"}
	-\omega^2 \widetilde{\rho}(x) P(x)+P^{\prime}(\widetilde{\rho}(x)) \nabla(\widetilde{\rho}(x))-\widetilde{\rho}(x)\left(\nabla V_{\widetilde{p}}(x)\right)=0
\end{equation}

Since we consider $N$-body solutions, we denote
$$
\widetilde{\rho}=\sum_{i=1}^N \widetilde{\rho}_i, \quad \widetilde{\rho}_i \geq 0
$$
such that $\text{spt}\left(\widetilde{\rho}_i\right)$ is connected for every $i \in 1, \ldots, N$ and the supports are mutually disjoint. Notice when $s>0$, $A^{\prime \prime}(s)$ exists and $A^{\prime \prime}(s) s=P^{\prime}(s)$ where $A$ is given in (\ref{A}). Then in the region $\{\widetilde{\rho}>0\}$, dividing (\ref{3"}) by $\widetilde{\rho}$, one has
$$
\nabla\left(-\frac{1}{2} \omega^2|P_{12}(x)|^2+A^{\prime}\left(\widetilde{\rho}_i\right)-V_{\widetilde{p}}\right)=0, \forall x \in\left\{\widetilde{\rho}_i>0\right\}
$$

Therefore, we have
\begin{equation}\label{EL0'}
	-\frac{1}{2} \omega^2|P_{12}(x)|^2+A^{\prime}\left(\widetilde{\rho}_i\right)-V_{\widetilde{p}}=C_i \text { in }\left\{\widetilde{\rho}_i>0\right\}\tag{EL0'}
\end{equation}
for some constants $C_i, i=1, \ldots, N$.\\

If the center of mass of $\widetilde{\rho}$ is $\left( {0,0,0} \right)^{T}$, later in subsection \ref{subsection5.1-variational derivative} we will see the variational derivative $E_J^{\prime}(\widetilde{\rho})$ is given by $E_J^{\prime}(\widetilde{\rho})(\sigma)=\int_{\mathbb{R}^3} E_J^{\prime}(\widetilde{\rho}) \sigma\,dx$, where $E_J^{\prime}(\widetilde{\rho})$ on the left-hand side is a linear functional, while $E_J^{\prime}(\widetilde{\rho})$ on the right-hand side is a function, which is actually the left-hand side of (\ref{EL0'}). And (\ref{EL0'}) holds true and is essentially the same as the Euler-Lagrange equation (\ref{EL}), which is stated in Theorem \ref{Properties of LEM}, given $\widetilde{\rho}$ is a $W^{\infty}$ local energy minimizer.


\begin{remark}\label{non-zero center}
%
	Sometimes, instead of (\ref{EL0'}), it is more convenient for us to first consider the following kind of Euler-Lagrange equation:
	\begin{equation}\label{EL'}
		-\frac{1}{2} \omega^2|P_{12}\left( {x - \bar{x}\left( \hat{\rho} \right)} \right)|^2+A^{\prime}\left(\hat{\rho}_i\right)-V_{\hat{\rho}}=C_i \text { in }\left\{\hat{\rho}_i>0\right\}\tag{EL'}
	\end{equation}
	for some constants $C_i, i=1, \ldots, N$.
	If we make a translation: $\widetilde{\rho}(x) = \hat{\rho}\left( x + \bar{x}\left( \hat{\rho} \right) \right)$ and $\hat{\rho}$ satisfies (\ref{EL'}), then $\widetilde{\rho}$ satisfies (\ref{EL0'}). One can refer to \cite[Formula (48)]{McC06} for more information.
\end{remark}


\subsection{McCann's results for 2-body systems}\label{subsection2.3-McCann's results for 2-body systems}

In this subsection, we revisit a slightly modified version of McCann's results for Wasserstein $L^\infty$ ($W^{\infty}$)  local energy minimizers and existence theorem of binary star solution (further discussion on modifications will be discussed in Section \ref{section3-detailed analysis}). We also make some remarks on them.

Although McCann assumes that pressure $P=P(\rho)$ satisfies \ref{F1}\ref{F2}\ref{F3'}, here we assume $P=P(\rho)$ satisfies \ref{F1}\ref{F2}\ref{F3}. Actually the arguments are mainly the same as we can see in Section \ref{section3-detailed analysis}.

\begin{theorem}[Properties of $W^{\infty}$-Local Energy Minimizers {\cite[Theorem 2.1]{McC06}}] \label{Properties of LEM}
	Let $J>0$. If $(\rho, v)$ minimizes $E(\rho, v)$ locally on ${R}_0\left(\mathbb{R}^3\right) \times {V}\left(\mathbb{R}^3\right)$ subject to the constraint $J_z(\rho, v)=J$ then:
	\begin{enumerate}[(i)]
		\item the z-axis is a principal axis of inertia for $\rho$, with a moment of inertia $I(\rho)$ which is maximal and non-degenerate;
		\item the rotation is uniform: $v(x):=\frac{J \hat{e}_z \times x}{I(\rho)}$, $E(\rho,v)=E_J(\rho)$ and $\rho$ locally minimizes $E_J(\rho)$ on ${R}\left(\mathbb{R}^3\right)$;
		\item $\rho$ is continuous on $\mathbb{R}^3$;
		\item on each connected component $\Omega_i$ of $\{\rho>0\}, \rho$ satisfies
		\begin{equation} \label{EL}
			A^{\prime}(\rho(x))=\left[\frac{J^2}{2 I^2(\rho)} r^2(x-\bar{x}(\rho))+V_\rho(x)+\lambda_i\right]_{+} \tag{EL}
		\end{equation}
		for some Lagrange multiplier $\lambda_i<0$ depending on the component. Here $[\cdot]_{+}$ is the nonnegative (positive) part function defined as $[\lambda]_{+}:=\max \{\lambda, 0\}$;
		\item the equations (\ref{EL}) continue to hold on a $\delta$-neighbourhood of the $\Omega_i$. Here the $\delta$-neighbourhood of $\Omega_i$ is defined as $\bigcup_{y \in \Omega_i}\left\{x \in \mathbb{R}^3\mid | x-y |<\delta\right\}$;
		\item if $P(\rho)$ satisfies assumption \ref{F4}, then $\rho\in C^1(\{\rho>0\})$;
		\item if $P(\rho)$ satisfies assumption \ref{F4'}, then $\rho$  satisfies the reduced Euler-Poisson equations (\ref{EP'});
		\item this solution is stable with respect to $L^{\infty}$-small perturbations of the Lagrangian fluid variables.
	\end{enumerate}
\end{theorem}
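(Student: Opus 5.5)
Parts (i) and (ii) come first. By Corollary \ref{velocity-free}, together with the continuity of $\rho\mapsto I(\rho)$ in the $W^\infty$ topology (Lemma \ref{properties of Wasser.} (v)), a local minimizer $(\rho,v)$ of $E$ under $J_z(\rho,v)=J$ has $v=\omega\hat e_z\times x$ with $\omega=J/I(\rho)$. Moreover $\rho$ locally minimizes $E_J$ on $R_0(\mathbb{R}^3)$. Translation invariance then extends this to $R(\mathbb{R}^3)$, since a small $W^\infty$ perturbation moves the center of mass only slightly and we can recenter.

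For (i), I compare $\rho$ with its rigid rotations $\rho\circ Q$, $Q\in SO(3)$ close to the identity. These are $W^\infty$-close to $\rho$, and $U$ and $G$ are invariant under them, so $E_J(\rho\circ Q)\ge E_J(\rho)$ forces $I(\rho\circ Q)\le I(\rho)$. Hence $I$ is a local maximum over rotations of the inertia tensor. Writing the first and second variations in $Q$ shows that $\hat e_z$ is an eigenvector of the inertia tensor and that the moment about it is maximal; strictness (non-degeneracy) follows from the second-order condition.

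For (iii)–(v), I perturb $\rho$ by transporting mass. Take $\sigma=(1-\epsilon)\rho+\epsilon\tau$, where $\tau$ is $\rho$ restricted near $\Omega_i$ and pushed forward by a map moving points less than $\delta$. Such perturbations stay $W^\infty$-close as long as the displacement of every particle is at most $\delta$. By Lemma \ref{diff. at mini.} the first variation exists and equals $\int E_J'(\rho)(\tau-\rho\chi)\,dx$, with
\[
E_J'(\rho)=A'(\rho)-V_\rho-\tfrac{J^2}{2I^2(\rho)}r^2(x-\bar x(\rho)).
\]
Minimality then says that mass cannot be moved within distance $\delta$ to lower this chemical potential. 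This gives two facts. First, $E_J'(\rho)$ equals the constant $-\lambda_i$ on each component $\Omega_i$ of $\{\rho>0\}$. Second, $E_J'(\rho)\ge-\lambda_i$ on the $\delta$-neighbourhood. Together they give \eqref{EL} there, which is (iv) and (v). The sign $\lambda_i<0$ follows because $A'(\rho)=0$ near the boundary while $V_\rho>0$.

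Continuity (iii) comes from $\rho=\phi([\cdots]_+)$, using Remark \ref{existence of inverse}. The bracket is continuous because $V_\rho$ is continuous: $\rho\in L^1\cap L^{4/3}$, bootstrapped to $L^\infty$ via \eqref{EL} and the growth of $A'$ in Remark \ref{asymptotic behaviour for A'}, so $V_\rho\in C^{1}$. For (vi), under \ref{F4} the map $\phi\in C^1(0,\infty)$ by Remark \ref{diff. of inverse}, and the bracket is $C^1$, so $\rho\in C^1(\{\rho>0\})$.

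For (vii), I set $P(\rho)=P\circ\phi(h)$ with $h=[\cdots]_+$. On $\{\rho>0\}$ the chain rule gives $\nabla P(\rho)=\rho\nabla h$, because $(P\circ\phi)'(s)=\phi(s)$ from $A''(t)t=P'(t)$. Differentiability of $P\circ\phi$ at points where only higher derivatives of $P$ are nonzero is handled by Lemma \ref{differentiability of composition}. Away from the closure of $\{\rho>0\}$, everything vanishes. The main obstacle is the free boundary $\partial\{\rho>0\}$. There I would use (v): \eqref{EL} holds on a full open neighbourhood, so $P(\rho)=P\circ\phi(h)$ with $h\in C^1$ and $h=0$ at the boundary. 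This gives $|P(\rho(x+y))-P(\rho(x))|\le \sup_{[0,\|h\|_\infty]}\phi\cdot|h(x+y)-h(x)|=o(|y|)$ whenever $\nabla h$ carries the right sign, or more simply by using $\phi(0)=0$. Hence $\nabla P(\rho)=0=\rho\nabla h$ there, and \eqref{EP'} holds everywhere after recentering as in Remark \ref{non-zero center}.

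Part (viii) follows because Lagrangian $L^\infty$-small perturbations are exactly $W^\infty$-small ones, so local minimality of the energy yields the stated stability.
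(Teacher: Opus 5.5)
The main gap is in your derivation of (iv)--(v). You claim that Lemma~\ref{diff. at mini.} gives the first variation $\int E_J'(\rho)(\tau-\rho\chi)\,dx$ for a transported competitor $\tau=T_\#(\rho\chi)$. That lemma, through Lemma~\ref{diff. of energy}, gives only two things: $E_J(\rho)<\infty$, and differentiability along the cone $P_\infty(\rho)$. That cone consists of bounded, compactly supported $\sigma$ that vanish where $\rho>R$ and are nonnegative where $\rho<R^{-1}$. Your direction is not of this kind, for three reasons.
\begin{enumerate}
\item $\rho\chi$ is unbounded unless you already know $\rho\in L^\infty$. You only obtain that in (iii), from \eqref{EL} itself, so the argument is circular.
\item $T_\#(\rho\chi)$ need not be absolutely continuous. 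The competitor therefore need not lie in $R(\mathbb{R}^3)$ or have finite internal energy.
\item The dominated-convergence step behind the formula needs a bound on $\rho$ on the support of the perturbation. A priori even $\int A'(\rho)\rho\chi\,dx$ may diverge, since $A'(s)s=A(s)+P(s)$ and \ref{F1}--\ref{F3} do not control $P$ by $A$.
\end{enumerate}

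The paper avoids transport altogether. It uses additive perturbations $\sigma\in P_\infty(\rho)$ supported in an open set $M$ with $\operatorname{diam}M\le 2\delta$. It gets $W^\infty(\rho,\rho+t\sigma)\le\operatorname{diam}M$ from Lemma~\ref{properties of Wasser.}(i), not from a displacement bound. It then extracts a local multiplier from Theorem~\ref{Multiplier them}, using $\pm\mathbf{1}_C$ on a set $C$ where $\rho$ is bounded away from $0$ and $\infty$ (Proposition~\ref{local constant multiplier}). Constancy follows by connectedness on components of the $\delta$-neighbourhood of $\operatorname{spt}\rho$ (Proposition~\ref{constant multiplier}). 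Your heuristic that mass cannot be moved within distance $\delta$ to lower the chemical potential becomes rigorous in exactly this form.

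Also, before (iii) is known, ``components of $\{\rho>0\}$'' are not well defined for an $L^{4/3}$ function. This is why the paper first proves \eqref{EL} a.e.\ on components of the $\delta$-neighbourhood of $\operatorname{spt}\rho$, then proves continuity, and only then the pointwise (iv)--(v). Finally, with the convention of \eqref{EL}, one has $E_J'(\rho)=\lambda_i$, not $-\lambda_i$, on $\{\rho>0\}$.

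A second, smaller gap is in (i), which the paper does not prove but defers to McCann. Your rotation argument correctly shows that $\hat e_z$ is an eigenvector of the inertia tensor for its largest eigenvalue. Non-degeneracy, however, cannot come from a second-order condition, because the local minimality is not strict. A repeated top eigenvalue only gives $I(\rho\circ Q_\theta)=I(\rho)$, hence $E_J(\rho\circ Q_\theta)=E_J(\rho)$, along a curve of rotations, and this violates no second-order necessary condition. You need an extra argument. For instance, the rotated densities would then also be local minimizers satisfying \eqref{EL}. That would force $r^2(Q_\theta x)-r^2(x)$, a nonzero quadratic polynomial for $\theta\neq 0$, to be constant on open sets, which is impossible.

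The remaining parts follow the paper: (ii) via Corollaries~\ref{velocity-free} and \ref{center/support free}, the bootstrap for (iii), the chain rule for (vi), and the boundary argument for (vii). In (vii), your bound with $\sup_{[0,\|h\|_\infty]}\phi$ only gives $O(|y|)$. The needed $o(|y|)$ comes, as in the paper, from $\phi(0)=0$ together with $h(x)=0$ and the boundedness of $\nabla W_\rho$.
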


\begin{remark}\label{removing positive part}
	On each connected component $\Omega_i$ of $\{\rho>0\}$, let $W_\rho(x)=\frac{J^2}{2 I^2(\rho)} r^2\left(x\right)+V_\rho\left(x\right)+\lambda_i$, then since $A^\prime$ is strictly increasing and $A^\prime(0)=0$, one can see $A^\prime(\rho(x))=W_\rho(x)$ on $\Omega_i$ due to (\ref{EL}). (See also the proof of Proposition \ref{local constant multiplier}.)
\end{remark}

\begin{remark}
	Due to the discussion in subsection \ref{subsection2.2-uniform rotation minimizers kinetic energy}, if $\widetilde{\rho}$ is a local minimizer over ${R}\left(\mathbb{R}^3\right)$ of $E_J(\rho)$ (or by Corollary \ref{center/support free}, even just over ${R}_0(\mathbb{R}^3)$), then we may define:
	$$
	({\rho}(t, x), v(t, x)):=\left(\widetilde{\rho}\left(R_{-w t} x\right), \omega\left(-x_2, x_1, 0\right)^T\right)
	$$
	where $\omega=\frac{J}{I\left(\rho\right)}$, then $({\rho}(t, x), v(t, x))$ gives a uniform rotating binary star system (or star-planet system) such that $E(\rho,v)$ and $E_J(\rho)$ are local minima. Moreover, $({\rho}(t, x), v(t, x))$ solves (\ref{EP}) with $V(t, x)=V_{\widetilde{\rho}}\left(R_{-\omega t} x\right)$.
\end{remark}

\begin{remark}
	The ``Lagrange multiplier'' $\lambda_i$ in Parts (iv) and (v) here is also referred to as the ``chemical potential'' in McCann's paper \cite{McC06}. In statistical mechanics, the chemical potential corresponds to $\frac{\partial E}{\partial N}$, where $E$ is the (free) energy and $N$ is the particle number. See also  \cite{Bae22}.
	
\end{remark}

\begin{remark}
%
	We retain the term $\bar{x}(\rho)$ here to be consistent with the form of the Euler-Lagrange equations in McCann's paper \cite[Section 6]{McC06}. One may also extend Theorem \ref{Properties of LEM} to cases with nonzero center of mass, for instance, via the translation mentioned in Remark \ref{non-zero center}.
\end{remark}

\begin{remark}\label{stable under L^infty-small perturbation }
	Thanks to $E(\rho,v)=E_J(\rho)$, we can further understand and elaborate on the meaning of Part (viii):
	
	Let $\rho$ represents the original density profile, and $Y_{t}(y) \in \mathbb{R}^{3}$ represents the position of the fluid particles at time $t$ which originated at $
	Y_{0}(y) = y$, then the density profile $\rho_t$ after time $t$ satisfies ${Y_t}_{\#} \rho=\rho_t$, where ${Y_{t}}_{\#}$ is the push-forward operator given in Definition \ref{push forward}. From the definition of Wasserstein metric, we have
	\begin{equation}\label{fluid velocity}
		W^{\infty}\left(\rho_s, \rho_t\right) \leq\left\|Y_s-Y_t\right\|_{L^{\infty}, \rho}
	\end{equation}	
	Here $\|h\|_{L^{\infty}(S),v}$ denotes the supremum of $|h|$ over $S$, discarding sets of $v$-measure zero.
	 
	If we further assume the fluid particles move with bounded velocities, then $Y_t(y)$ will be a Lipschitz function of $t$ uniformly in $y$, and it is evident that (\ref{fluid velocity}) will be controlled by a multiple of $|s-t|$. Thus $\rho_t $ is a continuous function with respect to time $t$.
	
	 On the other hand, if $\left\| {Y_{t} - Y_{0}} \right\|_{L^{\infty},\rho}$ is sufficiently small, then $\forall\tilde{v} \in V\left( \mathbb{R}^{3} \right)$ for which $T({Y_{t}}_{\#}\rho, \tilde{v})<\infty$ and satisfying the constraint ${J}_z({Y_{t}}_{\#}\rho, \tilde{v})=J$, we have
	$$
	E\left( {\rho,v} \right) = E_{J}(\rho) \leq E_{J}\left( {{Y_{t}}_{\#}\rho} \right) \leq E\left( {{Y_{t}}_{\#}\rho,~\tilde{v}} \right)
	$$
	The last inequality comes from Proposition \ref{uniform rotation}.
	
	Those arguments suggest an $L^{\infty}$-small perturbation of the Lagrangian fluid variables produces only a $W^{\infty}$-small perturbation of the density and thereby a local energy minimum $\rho \in {R}_0\left(\mathbb{R}^3\right)$ must be physically stable.	We will introduce more related content in Section \ref{section4-Wasserstein metric}.
\end{remark}
\begin{remark}\label{vector angular momentum}
	Although Theorem \ref{Properties of LEM} applies to energy minimizers subject only to a constraint on the z-component $J_z(\rho, v):=\hat{e}_z \cdot \boldsymbol{J}(\rho, v)$ of the angular momentum, it can be extended to the case of physical interest. To be precise, if $(\rho, v)$ minimizes $E(\rho, v)$ locally subject to the constraint $J_z(\rho, v)=J$, thanks to Theorem \ref{Properties of LEM} (i)(ii), the angular momentum of $(\rho, v)$ is $\boldsymbol{J}(\rho, v)=J \hat{e}_z$, which means $(\rho, v)$ also minimizes $E(\rho, v)$ locally subject to the more restricted constraints $\boldsymbol{J}(\rho, v)=J \hat{e}_z$ \cite[Corollary 2.2]{McC06}. 
	
	On the other hand, if the topology on ${V}\left(\mathbb{R}^3\right)$ is such that the map taking $\boldsymbol{w} \in \mathbb{R}^3$ to $v(x):=\boldsymbol{w} \times x \in {V}\left(\mathbb{R}^3\right)$ is continuous, then a converse to the statement above is also true: any local minimizer subject to the vector constraint $\boldsymbol{J}(\rho, v)=J \hat{e}_z$ also minimizes locally among the larger class of competitors with prescribed $J_z(\rho, v)=J$ \cite[Remark 2.3]{McC06}. This condition that $w \in \mathbb{R}^3  \mapsto v(x):=w \times x \in {V}\left(\mathbb{R}^3\right)$ is continuous is used in the following arguments. Due to \cite[Remark 3.3]{McC06}, we can have a similar result to Proposition \ref{uniform rotation}: if $\boldsymbol{J}$ is prescribed, the velocity which minimizes the energy is given by $\boldsymbol{v}(\rho,\boldsymbol{J})(x)=\boldsymbol{w}(\rho,\boldsymbol{J})\times x$, where $\boldsymbol{w}(\rho,\boldsymbol{J}) \in \mathbb{R}$ is the unique angular velocity compatible with the given density $\rho$ and angular momentum $\boldsymbol{J}$. Notice the axis $\boldsymbol{w}$ of rotation may not coincide with the $z$-axis. One can show when $\rho$ is sufficiently close to $\rho'$ in the sense of Wasserstein $L^\infty$ distance, $w(\rho,\boldsymbol{J})$ is sufficiently close to $w'(\rho',\boldsymbol{J'})$. This condition that $w \in \mathbb{R}^3  \mapsto v(x):=w \times x \in {V}\left(\mathbb{R}^3\right)$ is continuous ensures that $\boldsymbol{v}(\rho,\boldsymbol{J})$ is sufficiently close to $\boldsymbol{v'}(\rho',\boldsymbol{J'})$ in $V(\mathbb{R}^3)$. In another word, local perturbations in ${R}_0\left(\mathbb{R}^3\right)$ can create local perturbations in ${R}_0\left(\mathbb{R}^3\right) \times {V}\left(\mathbb{R}^3\right)$, which can help us to prove that any local minimizer subject to the vector constraint $\boldsymbol{J}(\rho, v)=J \hat{e}_z$ also minimizes locally among the larger class of competitors with prescribed $J_z(\rho, v)=J$, using similar arguments to \cite[Remark 2.3 and Section 3]{McC06}.
\end{remark}

\begin{remark}[Equivalence between (\ref{EP'}) and (\ref{EL})]\label{EL to EP}
	Now we can obtain a solution to (\ref{EP'}) from the Euler-Lagrange equation (\ref{EL}) thanks to Theorem \ref{Properties of LEM} (iv). Conversely, for $\rho\in R_0 (\mathbb{R}^3)$ with N connected components to be a solution of (\ref{EP'}), similar arguments as in subsection \ref{subsection2.2-uniform rotation minimizers kinetic energy} shows that (\ref{EL}) in Theorem \ref{Properties of LEM} (iv) is necessary.
\end{remark}

By the arguments of Auchmuty and Beals \cite{AB71} or Li \cite{Li91} or McCann \cite{McC06}, we know a constrained energy minimizer on $W_{m,J}$ exists and its support lies in the interior of $\Omega_m \cup \Omega_{1-m}$. Then such constrained energy minimizer is indeed a local minimizer due to Lemma \ref{properties of Wasser.} (ii), and hence a solution to (\ref{EP'}) after translation.
\begin{theorem}[Existence of Binary Stars {\cite[Theorem 6.1, Corollary 6.2]{McC06}}] \label{McCann's existence}
	Given $m \in(0,1)$, choose the angular momentum $J$ to be sufficiently large depending on $m$. Then any constrained minimizer $\widetilde{\rho}=\rho^{-}+\rho^{+}$of $E_J(\rho)$ on $W_{m,J}$ will, after a rotation about the z-axis and a translation, have support contained in the interior of $\Omega:=\Omega_{-} \cup \Omega_{+}$, that is, dist$\left(\operatorname{spt} \widetilde{\rho}, \mathbb{R}^3 \backslash \Omega\right)>0$. It will also be symmetric about the plane $z=0$ and a decreasing function of $|z|$. 
	
	What's more, after another translation, the center of mass of $\widetilde{\rho}$ can be 0 and is a local minimizer of $E_J(\rho)$. Let $v(x):=\omega \hat{e}_z \times x =\omega\left(-x_2, x_1, 0\right)^T$, where $\omega = \frac{J}{I(\widetilde{\rho})}$, then the pair $(\widetilde{\rho}, v)$ minimizes $E(\rho, v)$ locally on ${R}\left(\mathbb{R}^3\right) \times {V}\left(\mathbb{R}^3\right)$ (thus on ${R}_0\left(\mathbb{R}^3\right) \times {V}\left(\mathbb{R}^3\right))$ subject to the constraint $J_z(\rho, v)=J$ or $\boldsymbol{J}(\rho, v)=J \hat{e}_z$. $\widetilde{\rho}$ satisfies reduced Euler-Poisson equations (\ref{EP'}). Moreover, the uniformly rotating fluid $({\rho}(t, x), v(t, x))$ solves (\ref{EP}) with $V(t, x)=V_{\widetilde{\rho}}\left(R_{-\omega t} x\right)$, here $({\rho}(t, x), v(t, x))=\left(\widetilde{\rho}\left(R_{-\omega t} x\right), \omega\left(-x_2, x_1, 0\right)^T\right)$.
\end{theorem}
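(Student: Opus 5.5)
We have to prove Theorem \ref{McCann's existence}: existence of binary stars as constrained minimizers on $W_{m,J}$ which are $W^\infty$-local minimizers. The proof breaks into four steps.

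\textbf{Step 1: existence of a constrained minimizer.} We work on $W_{m,J}$. The set is bounded in space, since both supports lie in the fixed balls $\Omega_m,\Omega_{1-m}$. On this set $I(\rho)$ is bounded above by a multiple of $\eta^2$, and $T_J$ is bounded below. The bounds (F2),(F3) on $A$ (Remark \ref{assumptions from P to A}), together with Proposition \ref{finite gravitational interaction energy} and interpolation, give $E_J$ bounded below and coercive in $L^{4/3}$. Take a minimizing sequence. Split each member into its two pieces $\rho_m$ and $\rho_{1-m}$. The pieces are weakly compact in $L^{4/3}$, and no mass is lost because the domains are bounded. Then:
\begin{itemize}
\item $U$ is weakly lower semicontinuous, by convexity of $A$ (Lemma \ref{inc. of A'});
\item $G$ is weakly continuous on bounded domains, since Hardy--Littlewood--Sobolev gives compactness of the potentials;
\item $I$ is continuous under weak convergence with supports in a fixed bounded set.
\end{itemize}
So a minimizer $\widetilde\rho=\rho^-+\rho^+$ exists. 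Applying Steiner symmetrization in $z$ preserves mass and $I$, does not increase $U$, and does not decrease $G$ (Riesz rearrangement). Hence we may choose $\widetilde\rho$ symmetric about $z=0$ and decreasing in $|z|$. I would argue that \emph{every} minimizer has this form via the strict Riesz inequality.

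\textbf{Step 2: supports lie in the interior (main obstacle).} I would compare with the test state $\sigma_m(\cdot-y_m)+\sigma_{1-m}(\cdot-y_{1-m})$ built from non-rotating minimizers. Its energy is
\[
e_0(m)+e_0(1-m)-\frac{\mu_r}{\eta}+\frac{J^2}{2\mu_r\eta^2}+o(1/\eta),
\]
which equals $e_0(m)+e_0(1-m)-\frac{\mu_r}{2\eta}+o(1/\eta)$ at the Kepler optimum. This yields an upper bound. Conversely, the bound $E_0(\rho_i)\ge e_0(\cdot)$, the interaction bound $G(\rho^-,\rho^+)\le m(1-m)/\mathrm{dist}$, and $T_J\ge0$ show the following. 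Each component has energy within $O(1/\eta)$ of $e_0$ of its mass. Each is therefore, by a concentration-compactness or stability argument for the non-rotating problem, close to a translate of $\sigma_m$ or $\sigma_{1-m}$.

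The delicate point is upgrading this closeness to \emph{support} control uniformly as $J\to\infty$. For this I would use the Euler--Lagrange relation (\ref{EL}) on the constrained problem:
\[
A'(\rho)=\bigl[\tfrac{J^2}{2I^2}r^2+V_\rho+\lambda_i\bigr]_+ .
\]
Here the rotational term $J^2r^2/I^2\sim \mu_r^2\eta^{-3}\cdot\eta^2$ is small on the balls. The potential $V_\rho$ decays away from the bulk of each component, while $\lambda_i$ stays near the negative chemical potential of $\sigma_m$. So $\rho$ vanishes outside a ball of radius $R$ independent of $\eta$ around each center. Taking $J$ large makes $\eta/4\gg R$. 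Rotation about $z$ and translation then place the supports in the interior.

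\textbf{Step 3: passage to a local minimizer.} Since $\operatorname{dist}(\operatorname{spt}\widetilde\rho,\mathbb R^3\setminus\Omega)=\delta>0$, any $\rho$ with $W^\infty(\rho,\widetilde\rho)<\delta$ is a pushforward by a map moving mass less than $\delta$. Such a $\rho$ stays in $\Omega$, and its mass on each ball is preserved, so $\rho\in W_{m,J}$. This is Lemma \ref{properties of Wasser.} (ii). Thus $E_J(\widetilde\rho)\le E_J(\rho)$. Translation invariance allows us to recenter so that the center of mass is $0$.

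\textbf{Step 4: conclusions.} Corollary \ref{velocity-free} and Remark \ref{vector angular momentum} transfer local minimality to $(\widetilde\rho,v)$ under both the $J_z$ and $\boldsymbol J$ constraints. Theorem \ref{Properties of LEM} (vii) gives (\ref{EP'}), and the computation in subsection \ref{subsection2.2-uniform rotation minimizers kinetic energy} gives (\ref{EP}).
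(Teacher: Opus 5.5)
Your Steps 3 and 4 coincide with what the paper actually does for this statement. The paper takes existence of the constrained minimizer and the interior-support property from McCann (and Auchmuty--Beals, Li) as a black box. It gets $W^\infty$-local minimality from Lemma \ref{properties of Wasser.} (ii) plus recentering, and gets (\ref{EP'}) from Theorem \ref{Properties of LEM} (vii); in your argument as in the paper's, that step tacitly needs \ref{F4'}.

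Your Steps 1--2 reconstruct the cited part, and Step 2 has a genuine gap in its last sentence. A bound $R$ on the radius of each component, independent of $\eta$, does not by itself put $\operatorname{spt}\widetilde\rho$ in the interior of $\Omega$ after a rotation and a translation. Rigid motions preserve the distance $d$ between the centers of mass $c^\pm$ of the two components. The balls $B_R(c^-)$ and $B_R(c^+)$ fit into the interiors of $\Omega_m$ and $\Omega_{1-m}$ only if $|d-\eta|<\eta/2-2R$, and nothing in your argument controls $d$. What you extract from the energy comparison (each component is $O(1/\eta)$-close to minimal) is equally true for two stars pressed against the inner walls ($d\approx\eta/2$) or the outer walls ($d\approx 3\eta/2$). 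This is because you discard $T_J$ and bound the cross term by $\mu_r/\operatorname{dist}(\Omega_m,\Omega_{1-m})$.

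The missing idea is a second comparison, sharp at order $1/\eta$, made once $R$ is known. Using $G(\rho^-,\rho^+)\le\mu_r/(d-2R)$ and $I(\widetilde\rho)\le R^2+\mu_r d^2$, one gets
\[
E_J(\widetilde\rho)\ge e_0(m)+e_0(1-m)+f(d)-O(R/\eta^2),\qquad f(d):=-\frac{\mu_r}{d}+\frac{J^2}{2\mu_r d^2}.
\]
Your test state gives $E_J(\widetilde\rho)\le e_0(m)+e_0(1-m)+f(\eta)+O(\eta^{-3})$. Since $J^2=\mu_r^2\eta$ yields $f(d)-f(\eta)=\mu_r(d-\eta)^2/(2\eta d^2)$, this forces $|d-\eta|=O(\sqrt{R\eta})=o(\eta)$, which is what makes the rigid motion possible. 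This Kepler-type localization is exactly where the choices $\eta=J^2/\mu_r^2$ and radius $\eta/4$ are used.

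Separately, several ingredients are asserted rather than proved: the uniform-in-$J$ tightness of the components, the uniform $L^\infty$ bound, and the uniform bound $\lambda_i\le -c<0$. The last is needed so that a small $V_\rho$ forces $\rho=0$. Also, for general $P$ the non-rotating minimizer need not be unique, so ``the chemical potential of $\sigma_m$'' requires a compactness argument. These points are the analytic content of McCann's Section 6, so your Step 2 is at best an outline of that argument.
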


\section{Detailed Analysis of Theorem \ref{Properties of LEM}} \label{section3-detailed analysis}
We point out that in McCann's proof of Theorem \ref{Properties of LEM}, the discussion on how to transition from (\ref{EL}) to (\ref{EP'}) is not particularly detailed. One can see intuitively from subsection \ref{subsection2.2-uniform rotation minimizers kinetic energy} that a refined discussion on differentiability is required. Therefore, in this section, we will go through the proof of Theorem \ref{Properties of LEM}, give new ideas for establishing differentiability (Part (vi) and Part (vii) of Theorem \ref{Properties of LEM}), and explain the differences between our proof and McCann's.

First we show the differentiability of $P \circ \phi=P \circ \left(A^{\prime}\right)^{-1}$, with the additional assumption \ref{F4'}.

\begin{lemma}[Differentiability of $P \circ \phi$] \label{differentiability of composition}
	Assume $P$ satisfies \ref{F1}\ref{F2}\ref{F3}\ref{F4'}, then $P \circ \phi$ is differentiable in $(0, \infty)$ and the derivative is $\phi(s)$. Moreover, $P \circ \phi$ is right differentiable at 0 with right derivatives $\phi(0)=0$. Here $\phi=\left(A^{\prime}\right)^{-1}$ exists by Remark \ref{existence of inverse}.
\end{lemma}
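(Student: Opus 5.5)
The plan is to use the identity (\ref{A', A and P relation}), $P(t)=A'(t)t-A(t)$, evaluated at $t=\phi(s)$. This writes $P\circ\phi$ as a Legendre-type transform of the convex function $A$:
\[
P(\phi(s)) = s\,\phi(s) - A(\phi(s)), \qquad s\ge 0,
\]
since $A'(\phi(s))=s$ by Remark \ref{existence of inverse}. I would then compute the derivative by squeezing difference quotients between values of $\phi$, rather than by differentiating $\phi$ itself. That avoids needing $\phi\in C^1$, which is exactly what could fail where $P'$ vanishes.

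Fix $s\ge 0$ and $h$ with $s+h\ge 0$, $h\neq 0$, and set $t=\phi(s)$ and $t'=\phi(s+h)$. Then
\[
P(t')-P(t)=(s+h)t'-st-\bigl(A(t')-A(t)\bigr).
\]
By the formula (\ref{A}), $A$ is $C^1$ on $[0,\infty)$ with derivative (\ref{A'}), and it is convex by Lemma \ref{inc. of A'}. So the supporting-line inequalities at $t$ and at $t'$ give
\[
s\,(t'-t)\le A(t')-A(t)\le (s+h)(t'-t),
\]
using $A'(t)=s$ and $A'(t')=s+h$. Substituting these into the identity yields
\[
h\,t \le P(t')-P(t)\le h\,t'.
\]
Dividing by $h$, the difference quotient $\frac{P(\phi(s+h))-P(\phi(s))}{h}$ therefore lies between $\phi(s)$ and $\phi(s+h)$. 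The inequalities reverse when $h<0$, but it still lies between these two values.

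Since $\phi$ is continuous (Remark \ref{existence of inverse}), letting $h\to 0$ gives $(P\circ\phi)'(s)=\phi(s)$ for every $s>0$. At $s=0$ only $h>0$ is allowed, so the same squeeze gives the right derivative $\phi(0)=0$, because $A'(0)=0$.

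I expect no serious obstacle. The only delicate point is justifying the two convexity inequalities, which requires differentiability of $A$ at the endpoints $t,t'$ including $t=0$. This is supplied by (\ref{A'}) together with the continuity of $A'$ from Lemma \ref{inc. of A'}. I would also remark that the argument appears to use \ref{F4'} only through the standing $C^1$ regularity of $P$, and in fact needs only \ref{F1}\ref{F2}\ref{F3}.
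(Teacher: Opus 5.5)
Your proof is correct, and it takes a genuinely different and simpler route than the paper's. The paper also sets $t=\phi(s)$, $t+l=\phi(s+h)$ and uses $P=A'\rho-A$. It then writes the difference quotient as $t+l+\frac{lA'(t)+A(t)-A(t+l)}{A'(t+l)-A'(t)}$ and disposes of the correction term with second-order information. Cauchy's and Lagrange's mean value theorems, which require $A''(\rho)=P'(\rho)/\rho$ to exist, turn the correction term into $-\frac{\theta l\,(t+\theta l)}{t+\tilde\theta\theta l}\cdot\frac{P'(t+\tilde\theta\theta l)}{P'(t+\theta l)}$. A Taylor expansion under (F4') is then invoked to keep the ratio of $P'$-values bounded. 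In effect the paper is justifying the formal chain rule $(P\circ\phi)'=P'(\phi)\,\phi'=\phi\,A''(\phi)\,\phi'=\phi$ near points where $P'$ (hence $A''$) may vanish and $\phi$ need not be differentiable.

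Your supporting-line inequalities show directly that the quotient lies between $t$ and $t+l$, i.e.\ the correction term lies between $-l$ and $0$. So no information about $P'$ is needed. Conceptually, you are using that $P\circ\phi$ is the Legendre conjugate of the strictly convex $C^1$ function $A$, whose derivative is $(A')^{-1}$. This buys a shorter, first-order proof under (F1)--(F3) alone. It also bypasses the most delicate step of the paper's proof, the bound $\bigl|P'(t+\tilde\theta\theta l)/P'(t+\theta l)\bigr|\le 2$. As written, that bound relies on existence and continuity of higher derivatives of $P$ near $t$, which (F4') does not literally provide.

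One small correction to your closing sentence: the argument does not use $C^1$ regularity of $P$ at all, even indirectly. The fact that $A\in C^1([0,\infty))$ with $A'(0)=0$ already follows from (F1) and (F2).
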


\begin{proof}
	{\rm Given $s>0, h \in \mathbb{R}$ with $0<|h| \leq s$, let $t=\phi(s)$ and $t+l=\phi(s+h)$, then $h \rightarrow 0$ is equivalent to $l \rightarrow 0$ due to the continuity and monotonicity of $\phi$, and we also have $A^{\prime}(t)=s, A^{\prime}(t+l)=s+h$ since $\phi=\left(A^{\prime}\right)^{-1}$. Recall by \eqref{A', A and P relation} we have $A^{\prime}(s) s-A(s)= P(s)$. Therefore, we have
		$$
		\begin{aligned}
			\frac{P \circ \phi(s+h)-P \circ \phi(s)}{h}&=\frac{P(t+l)-P(t)}{A^{\prime}(t+l)-A^{\prime}(t)} \\
			&=\frac{\left((t+l) A^{\prime}(t+l)-A(t+l)\right)-\left(t A^{\prime}(t)-A(t)\right)}{A^{\prime}(t+l)-A^{\prime}(t)} \\
			&=\frac{(t+l)\left(A^{\prime}(t+l)-A^{\prime}(t)\right)}{A^{\prime}(t+l)-A^{\prime}(t)}+\frac{l A^{\prime}(t)+A(t)-A(t+l)}{A^{\prime}(t+l)-A^{\prime}(t)} \\
			&=t+l+\frac{l A^{\prime}(t)+A(t)-A(t+l)}{A^{\prime}(t+l)-A^{\prime}(t)}
		\end{aligned}
		$$
		Let $B(l)=l A^{\prime}(t)-A(t+l)$, then $B$ is differentiable and $B^{\prime}(l)=A^{\prime}(t)-A^{\prime}(t+l) \neq$ 0 if $l \neq 0$, since $A^{\prime}$ is strictly increasing. Notice $l A^{\prime}(t)+A(t)-A(t+l)=B(l)-B(0)$, by Cauchy's mean value theorem \cite[Theorem 5.9]{Rud76}, $\exists \theta \in(0,1)$, such that 
		$$(B(l)-B(0)) A^{\prime \prime}(t+\theta l)=\left(A^{\prime}(t+l)-A^{\prime}(t)\right) B^{\prime}(\theta l)$$  
		Since $l \neq 0$, $\left(A^{\prime}(t+l)-A^{\prime}(t)\right)\neq 0$, $B^{\prime}(\theta l)\neq 0$, then both $(B(l)-B(0))$ and $A^{\prime \prime}(t+\theta l)$ are also not 0. Then
		$$
		\begin{aligned}
			\frac{l A^{\prime}(t)+A(t)-A(t+l)}{A^{\prime}(t+l)-A^{\prime}(t)}&=\frac{(B(l)-B(0)) B^{\prime}(\theta l)}{(B(l)-B(0)) A^{\prime \prime}(t+\theta l)}\\
			&=\frac{A^{\prime}(t)-A^{\prime}(t+\theta l)}{A^{\prime \prime}(t+\theta l)}
		\end{aligned}
		$$
		By Lagrange's mean value theorem \cite[Theorem 5.8]{Rud76}, $\exists \widetilde{\theta} \in(0,1)$, such that $A^{\prime}(t)-A^{\prime}(t+\theta l)=(-\theta l) A^{\prime \prime}(t+\widetilde{\theta} \theta l)$. Notice $A^{\prime \prime}(s)=\frac{P^{\prime}(s)}{s}$ when $s>0$. Thus
		$$
		\begin{aligned}
			\frac{A^{\prime}(t)-A^{\prime}(t+\theta l)}{A^{\prime \prime}(t+\theta l)}&=\frac{(-\theta l) A^{\prime \prime}(t+\widetilde{\theta} \theta l)}{A^{\prime \prime}(t+\theta l)}\\
			&=-\frac{\theta l(t+\theta l)}{t+\widetilde{\theta} \theta l} \cdot \frac{P^{\prime}(t+\widetilde{\theta} \theta l)}{P^{\prime}(t+\theta l)}
		\end{aligned}
		$$
		By Taylor's theorem \cite[Theorem 5.15]{Rud76}, together with \ref{F4'}, we know $\exists \theta_1 \in(0,1), \theta_2 \in(0,1)$,
		$$P^{\prime}(t+\theta l)=P^{\prime}(t)+\theta l P^{\prime \prime}(t)+\frac{(\theta l)^2}{2} P^{(3)}(t)+\cdots+\frac{(\theta l)^n}{n!} P^{(n+1)}\left(t+\theta_1 \theta l\right)=\frac{(\theta l)^n}{n!} P^{(n+1)}\left(t+\theta_1 \theta l\right)\neq 0$$
		Similarly, we have $$P^{\prime}(t+\widetilde{\theta} \theta l)=\frac{(\widetilde{\theta} \theta l)^n}{n!} P^{(n+1)}\left(t+\theta_2 \widetilde{\theta} \theta l\right)\neq 0$$ 
		Thus $\left|\frac{P^{\prime}(t+\widetilde{\theta} \theta l)}{P^{\prime}(t+\theta l)}\right| \leq 2$ when $|l|$ small enough. As $l \rightarrow 0$, we have $\frac{A^{\prime}(t)-A^{\prime}(t+\theta l)}{A^{\prime \prime}(t+\theta l)} \rightarrow 0$, and then $\frac{P \circ \phi(s+h)-P \circ \phi(s)}{h} \rightarrow t=\phi(s)$ as $h \rightarrow 0$. Thus, the derivative of $P \circ \phi$ is $\phi(s)$. By the same arguments we know $P \circ \phi$ is right differentiable at 0 with right derivatives $\phi(0)=$ 0.}
\end{proof}


\bigskip
We skip the proofs of \textbf{Part (i)} and \textbf{Part (viii)} of Theorem \ref{Properties of LEM}; one can refer to McCann's paper \cite{McC06}. Part (ii) and Part (iii) are closely related to the context, so we revisit McCann's proof here. The other parts of the proof are relatively new, either addressing details not mentioned by McCann (Parts (iv-v)) or providing new approaches to proving differentiability and explaining the necessity of modifying the original statements of the conclusions (Parts (vi-vii)).

\begin{proof}[Proof of Part (ii-vi) of Theorem \ref{Properties of LEM}]
	{
		\rm
		\textbf{Part (ii)} essentially follows from Corollary \ref{velocity-free} and Corollary \ref{center/support free}. We note that $E(\rho,v)=E_J(\rho)$ due to the definition of $v(x):=\frac{J \hat{e}_z \times x}{I(\rho)}$, $E$ (\ref{energy}), $E_J$ (\ref{energy of UR}), $T$ (\ref{T}), and $T_J$ (\ref{T_J}). 
		
		Due to Part (ii), it suffices to consider $\rho$ that locally minimizes $E_J(\rho)$ on $R(\mathbb{R}^3)$. We will give proofs of ``weak'' versions of Parts (iv-v) (Proposition \ref{local constant multiplier}, Proposition \ref{constant multiplier} and Remark \ref{connected components and delta neighborhood}) later in subsection \ref{subsection5.2-locally constant chemical potential}, that is, (\ref{EL}) holds for almost all points in the region we consider with negative $\lambda_i$. Based on this ``weak'' version of (v), we obtain the global continuity in Part (iii) as the following: first notice that almost everywhere in the region $\{\rho>0\}$, (\ref{EL}) holds true and $A^{\prime}(\rho)>0$ implies $\frac{J^2}{2 I^2(\rho)} r^2(x)+V_\rho(x)+\lambda_i>0$. Therefore, on $\{\rho > 0\}$, since $\rho \in {R}_0\left(\mathbb{R}^3\right)$ has compact support, we have $$A^{\prime}(\rho(x))=\frac{J^2}{2 I^2(\rho)} r^2(x)+V_\rho(x)+\lambda_i \leq V_\rho(x)+C$$ This result, together with Hardy-Littlewood-Sobolev Inequality \cite[Theorem 1.7]{BCD11} or Proposition \ref{bound of potential}, can help to show the bound of $\|\rho\|_{L^p}$ and $\left\|V_\rho\right\|_{L^{\widetilde{p}}}$ with $p$ and $\widetilde{p}$ increasing alternately and reaching infinity in finite steps (bootstrap method, similar as \cite[Lemma 3 and Theorem A]{AB71}). Then $V_\rho$ is continuously differentiable thanks to Proposition \ref{diff. of poten.}. Then on a $\delta$-neighbourhood of $\{\rho>0\}$, denoted by $\Omega_\delta$, which contains $\partial\{\rho>0\}$, we have $\rho=\phi \circ\left[W_\rho\right]_{+}$ a.e., where $W_\rho(x)=\frac{J^2}{2 I^2(\rho)} r^2(x)+V_\rho(x)+\lambda_i$ and $\phi=\left(A^{\prime}\right)^{-1}$ exists and continuous by Remark \ref{existence of inverse}. It means $\rho$ has a continuous representative there. Notice the points in $\overline{\mathbb{R}^3 \backslash \Omega_\delta}$ are the interior points of $\{\rho=0\}$, hence $\rho$ is also continuous in $\overline{\mathbb{R}^3 \backslash \Omega_\delta}$. Thus, we conclude \textbf{Part (iii)}.
		
		Due to the global continuity of $\rho$ and $V_\rho$ (that is, they are continuous on $\mathbb{R}^3$), we can come back to strengthen the ``weak'' versions of Parts (iv-v), replace ``almost all points'' by ``all points'' and get \textbf{Parts (iv)} and \textbf{Part (v)}.
		
		For \textbf{Part (vi)}, notice in the region $\{\rho>0\}$, we have $\rho=\phi \circ W_\rho$ by Remark \ref{removing positive part}. Thanks to Part (iii) and $\rho \in {R}_0\left(\mathbb{R}^3\right)$, we know $\rho \in C_c^0\left(\mathbb{R}^3\right)$, where $C_c^0\left(\mathbb{R}^3\right)$ denotes the family of continuous functions with compact support. In particular, $\rho \in L^{1}\left( \mathbb{R}^{3} \right) \cap L^{q}\left( \mathbb{R}^{3} \right)$ for any $q>3$. By Proposition \ref{diff. of poten.}, we know $V_\rho$ is continuous differentiable in $\mathbb{R}^3$. Since $P(\rho)$ satisfies assumption \ref{F4}, by Remark \ref{diff. of inverse} we know $\phi \in C^1 ((0,\infty))$. Apply chain rule (one can refer to \cite[Theorem 10.1.15]{Tao16}), we obtain $\rho\in C^1 (\{\rho>0\})$. 
	}
	\end{proof}

	\begin{remark}
	Theorem \ref{Properties of LEM} (vi) is slightly different from the one in McCann's theorem \cite[Theorem 2.1 (vi)]{McC06}, which states ``where $\rho$ is positive, it has as many derivatives as the inverse of $A'(\rho)$''. In McCann's proof, he said ``Because $V_\rho$ gains a derivative from $\rho$, smoothness of $\rho$ where positive follows from a bootstrap in (15)'', where ``(15)'' is (\ref{EL}) in this paper. It is true that once $\rho \in W^{k,q}\left( \mathbb{R}^{3} \right)$ with some $q>3$ then we can obtain $V_{\rho} \in W^{k + 1,\infty}\left( \mathbb{R}^{3} \right) \cap C^{k + 1}\left( \mathbb{R}^{3} \right)$ by the similar arguments in Proposition \ref{diff. of poten.}. But so far we only know $\rho \in C^{1}\left( \left\{ {\rho > 0} \right\} \right) \cap C_{c}^{0}\left( \mathbb{R}^{3} \right)$, one thing that is not clear is whether $\rho \in W^{1,q}\left( \{\rho>0\} \right)$, and hence whether $\rho \in W^{1,q}\left( \mathbb{R}^{3} \right)$. The difficulty lies in the boundary behaviour: although $\rho$ is globally continuous and vanishes on $\partial\{\rho>0\}$, we do not know the regularity of $\partial\{\rho>0\}$ and we cannot rule out the possibility that $\nabla\rho$ blows up near the boundary. It means we might not be able to apply the result of trace operator (e.g., \cite[Exercise 15.26]{Leo09}) to glue $\rho \cdot \mathbf{1}_{\{\rho > 0\}}$ and $\rho \cdot \mathbf{1}_{\{\rho = 0\}}$ (here $\mathbf{1}_E$ denotes the indicator function of $E$).
	
	If we do not know whether $\rho \in W^{1,q}\left( \mathbb{R}^{3} \right)$, we might also not know whether $V_{\rho} \in W^{2,\infty}\left( \mathbb{R}^{3} \right) \cap C^{2}\left( \mathbb{R}^{3} \right)$. 
	Even if we assume, for example, $\phi \in C^{2}\left( (0,\infty) \right)$, due to Remark \ref{removing positive part}, we know on $\Omega_i$,
	$$\rho(x) = \phi \circ W_{\rho}(x)=\phi \circ\left(\frac{J^2}{2 I^2(\rho)} r^2\left(x\right)+V_\rho(x)+\lambda_i\right)$$ 
	 but it might not be enough to show $\rho\in C^2 (\{\rho>0\})$ if we do not know $V_{\rho} \in W^{2,\infty}\left( \mathbb{R}^{3} \right) \cap C^{2}\left( \mathbb{R}^{3} \right)$. Therefore, it is not quite convincing if $\phi\in C^r ((0,\infty))$ implies $\rho\in C^r (\{\rho>0\})$ if $r\geq 2$.
	 
	 Additionally, although Theorem \ref{Properties of LEM} (vi) suggests $\phi=\left(A^{\prime}\right)^{-1} \in C^1((0, \infty))$ implies $\rho \in C^1(\{\rho>0\})$, whether the converse holds remains open.
	\end{remark}

Note that when we try to show $\rho$ satisfies (\ref{EP'}), we do not necessarily need to deal with the gradient of $\rho$; we only need to consider the gradient of $P(\rho)$. As a result, \ref{F4} can be relaxed to \ref{F4'}. Under the conditions of \ref{F4'}, we can still prove the gradient of $P(\rho)$ exists and then $\rho$ satisfies (\ref{EP'}), as shown in the proof of Part (vii) of Theorem \ref{Properties of LEM} below.
\begin{proof}[Proof of Part (vii) of Theorem \ref{Properties of LEM}]
{\rm $\rho \in C_c^0\left(\mathbb{R}^3\right)$ implies $\nabla V_\rho$ exists and is continuous in $\mathbb{R}^3$ thanks to Proposition \ref{diff. of poten.}, so is $\nabla W_\rho$, where $W_\rho(x)=\frac{J^2}{2 I^2(\rho)} r^2(x)+V_\rho(x)+\lambda_i$ as in the proof of Part (iii-vi). In the region $\{\rho>0\}$, (\ref{EL}) holds true and $A^{\prime}(\rho)>0$ implies $W_\rho>0$. $\nabla W_\rho$ exists means $\nabla A^{\prime}(\rho)$ exists. Thanks to Lemma \ref{differentiability of composition}, we know 
	$$
	\begin{aligned}
		\nabla(P \circ \rho)(x)&=\nabla\left(P \circ \phi \circ W_\rho\right)(x)\\
		&=(P \circ \phi)^{\prime}\left(W_\rho(x)\right) \nabla W_\rho(x)\\
		&=\phi\left(W_\rho(x)\right) \nabla W_\rho(x)\\
		&=\phi\left(W_\rho(x)\right) \nabla A^{\prime}(\rho(x))\\
		&=\rho(x) \nabla A^{\prime}(\rho(x))
	\end{aligned}
	$$ 
	Then we take the gradient of (\ref{EL}) in $\{\rho>0\}$. By the similar computations in subsection \ref{subsection2.2-uniform rotation minimizers kinetic energy}, we have (\ref{EP'}) holds true in $\{\rho>0\}$.
	
	In the interior of $\{\rho=0\}$, easy to check (\ref{EP'}) still holds true since $\nabla P(\rho(x))$ exists and $\nabla P(\rho(x))=0$, and $\nabla V_\rho(x)$ exists and finite. 
	
	The points that remain to be checked are those on the boundary $\partial\{\rho=0\}=\partial\{\rho>0\}$. Since $\rho$ is continuous, we know $\rho=0$ on $\partial\{\rho=0\}$ and then similarly as above we have $-\omega^2 \rho(x) P_{12}(x)=-\rho(x) \nabla V_\rho(x)=0$. But to show (\ref{EP'}) holds, we still need to check if $\nabla P(\rho(x))$ exists and is 0 on $\partial\{\rho=0\}$.
	
	Given $x_0 \in \partial\{\rho=0\}$ and $\left\{x_n\right\} \subset \mathbb{R}^3$, where $x_n=x_0+h_n \widehat{e_1}, \widehat{e_1}=(1,0,0)^T, h_n \neq 0$, and $\lim\limits_{n \rightarrow \infty} h_n=0$. If $\rho\left(x_n\right)=0$ for all $n$, then $P\left(\rho\left(x_n\right)\right)=P\left(\rho\left(x_0\right)\right)=0$, and $\lim\limits_{n \rightarrow \infty} \frac{P\left(\rho\left(x_n\right)\right)-P\left(\rho\left(x_0\right)\right)}{h_n}=0$. If $\rho\left(x_n\right)>0$ for all $n$, then $y_n:=W_\rho\left(x_n\right)>0$ and $\rho\left(x_n\right)=\phi\left(y_n\right)$ as above. 
	
	We first claim $x_0 \in \partial\{\rho>0\}$ implies $y_0:=W_\rho\left(x_0\right)=0$ by continuity. In fact, $y_0>0$ would imply $\rho\left(x_0\right)>0$, while $y_0<0$ would imply $x_0$ is in the interior of $\{\rho=0\}$, and both cases cannot happen since $x_0 \in \partial\{\rho>0\}$. In particular, $\rho(x_0)=\phi(y_0)=0$.
	
	Then, by Lemma \ref{differentiability of composition} and Lagrange's mean value theorem \cite[Theorem 5.8]{Rud76}, $\exists \theta_1 \in(0,1), \theta_2 \in(0,1)$, such that
	$$
	\begin{aligned}
		\frac{P\left(\rho\left(x_n\right)\right)-P\left(\rho\left(x_0\right)\right)}{h_n}&=\frac{P\left(\phi\left(y_n\right)\right)-P\left(\phi\left(y_0\right)\right)}{y_n-y_0} \cdot \frac{y_n-y_0}{h_n} \\
		&=\phi\left(y_0+\theta_2\left(y_n-y_0\right)\right) \cdot \partial_{x_1} W_\rho\left(x_0+\theta_1\left(x_n-x_0\right)\right)
	\end{aligned}
	$$
	
	$\lim\limits_{n \rightarrow \infty} x_n=x_0$ implies $\left\{x_n\right\}$ is bounded, so is $\left\{y_n\right\}$. Therefore, $\{\partial_{x_1} W_\rho(x_0+\theta_1(x_n-x_0))\}$ is a bounded sequence. $\lim\limits_{n \rightarrow \infty} \phi\left(y_0+\theta_2\left(y_n-y_0\right)\right)=\phi\left(y_0\right)=0$ implies $\lim\limits_{n \rightarrow \infty} \frac{P\left(\rho\left(x_n\right)\right)-P\left(\rho\left(x_0\right)\right)}{h_n}=0$.
	
	Now we have considered the cases $\rho\left(x_n\right)=0$ and $\rho\left(x_n\right)>0$ and show $\lim\limits_{n \rightarrow \infty} \frac{P\left(\rho\left(x_n\right)\right)-P\left(\rho\left(x_0\right)\right)}{h_n}=0$ in both cases. It is a sufficient condition to show $\partial_{x_1} P(\rho)$ exists and is 0 at $x_0$. Otherwise, we could have a sequence $\left\{\widetilde{h_n}\right\}$ converging to $0$, $\widetilde{x_n}=x_0+\widetilde{h_n} \widehat{e_1},\left\{\widetilde{x_n}\right\}$ (up to subsequence) satisfies either $\rho\left(\widetilde{x_n}\right)>0$ for all $n$ or $\rho\left(\widetilde{x_n}\right)=0$ for all $n$, $\frac{P\left(\rho\left(\widetilde{x_n}\right)\right)-P\left(\rho\left(\widetilde{x_0}\right)\right)}{h_n}$ would not converge to 0. But the result above shows $\lim\limits_{n \rightarrow \infty} \frac{P\left(\rho\left(\widetilde{x_n}\right)\right)-P\left(\rho\left(\widetilde{x_0}\right)\right)}{h_n}=0$, which leads to a contradiction. Similarly, $\partial_{x_2} P(\rho)$ and $\partial_{x_3} P(\rho)$ also exist and are 0 at $x_0$. Therefore, $\nabla P(\rho(x))=0$ on $\partial\{\rho>0\}$ and then (\ref{EP'}) holds true on $\partial\{\rho>0\}$, which gives us \textbf{Part (vii)}.}
	\end{proof} 
	
	\begin{remark}
	Theorem \ref{Properties of LEM} (vii) is also slightly different from the one in McCann's theorem \cite[Theorem 2.1 (vii)]{McC06}, which states ``If $P(\rho)$ is continuously differentiable on $[0,\infty)$, then $\rho$ satisfies (\ref{EP'})''. Although there is only two-line proof of Part (vii) in McCann's paper \cite[Theorem 2.1]{McC06}: ``If $P(\rho)$ is continuously differentiable, then $A^{\prime \prime}(\rho)=\frac{P^{\prime}(\rho)}{\rho}$ and (vii) follows by taking the gradient of (15)'', where ``(15)'' is (\ref{EL}) in this paper, it is not quite trivial since we do not know whether $\nabla P(\rho)$ truly exists in $\mathbb{R}^3$ ((\ref{EL}) holds only in a $\delta$-neighborhood). Moreover, even if $P$ is differentiable, the identity $A''(s)=\frac{P'(s)}{s}$ for a scalar $s$ does not immediately imply $\nabla A'(\rho)=\frac{\nabla P(\rho)}{\rho}$ for a function $\rho$. We need more detailed discussions as above, under the assumption \ref{F4'}.
	\end{remark}
%

\begin{remark}
Going through the proof, we can see Theorem \ref{Properties of LEM} also holds true when the number of components $n$ is not 2. However, it may not be clear so far if such $W^{\infty}$ local minimizers exist or not when $n \geq 3$. Jang and Seok \cite[1. Introduction]{JS22} say ``when $N \geq 3$, uniformly rotating N-body stellar objects do not retain a variational characterization analogous to the binary case and they are not expected to be stable in general''.
\end{remark}

\begin{remark}
	In Remark \ref{supports are in the interior} and Theorem \ref{McCann's existence}, we mention that the support of the constrained minimizer lies in the interior of $\Omega_m \cup \Omega_{1-m}$ implies the constrained minimizer is a local minimizer, hence satisfies Theorem \ref{Properties of LEM}. In fact, if we revisit the proof of Theorem \ref{Properties of LEM}, those results (excluding the equivalence result that $(\rho, v)$ minimizes $E(\rho, v)$ locally if and only if $\rho$ minimizes $E_J(\rho)$ locally) are also true without specifying it is a local minimizer. The crucial thing is deriving (\ref{EL}) in a neighborhood of the support of the constrained minimizer, which is true if the support lies in the interior of $\Omega_m \cup \Omega_{1-m}$. Then we can verify (\ref{EP'}) at the boundary of the support.
\end{remark}

\section{Wasserstein $L^{\infty}$ Metric on ${R}\left(\mathbb{R}^3\right)$}\label{section4-Wasserstein metric}

In this section we revisit some basic concepts and properties of Wasserstein $L^{\infty}$ metric ($W^\infty$ metric), which is important to show the existence of solutions to (\ref{EP}). Additionally, we present a new result stating that, under the topology induced by $W^\infty$ metric, given $\rho$, one can always find a $\sigma$ in the neighborhood of $\rho$ such that $\sigma$ lies in $L^\infty$. This result is helpful in Section \ref{section5-constant chemical potential}, where we establish the existence of variational derivatives for local minimizers.

Let us first discuss the motivation of introducing $W^\infty$ metric: as McCann says in \cite{McC06}, when we discuss what local minima are, we should first specify the topology. The choice will be delicate. In order to have local minima, we hope the topology to be strong enough to preclude tunneling of mass (counterexample can be seen in \cite[Example 3.6 and Remark 3.7]{McC06}). On the other hand, to make the local minima physically meaningful, we hope the topology to be weak enough so that the evolution of physical flows is continuous (see Remark \ref{stable under L^infty-small perturbation } or the text below). Thanks to some probability literature, we can find a topology with these properties, that is, the topology induced by Wasserstein $L^{\infty}$ metric. It is described, e.g., by McCann \cite{McC06}, Ambrosio, Brué and Semola \cite{ABS21} or Givens and Shortt \cite{GS84}.

In a metric space $(X, d), \mathcal{B}(X)$ denotes its Borel $\sigma$-algebra and $\mathcal{M}(X)$ the set of the $\sigma$-additive functions $\tilde{\mu}: \mathcal{B}(X) \rightarrow \mathbb{R}$. Furthermore, we define the \textit{nonnegative measure space} as:
$$
\mathcal{M}_{+}(X):=\{\tilde{\mu} \in \mathcal{M}(X): \tilde{\mu} \geq 0\}
$$

And define the \textit{probability (measure) space} as:
$$
\mathcal{P}(X)=\left\{\tilde{\mu} \in \mathcal{M}_{+}(X): \tilde{\mu}(X)=1\right\}
$$

\begin{definition}[Push Forward Measure]\label{push forward}
	Given a Borel function $f: X \rightarrow Y$, we define the \textit{push forward operator} $f_{\#}: M(X) \rightarrow M(Y)$ by
	$$
	f_{\#} \tilde{\mu}(B):=\tilde{\mu}\left(f^{-1}(B)\right) \quad  \text{ for all } \tilde{\mu}\in M(X) \text{ and all Borel sets } B \in \mathcal{B}(Y)
	$$
	
	And call $f_{\#} \tilde{\mu}$ \textit{push forward measure}.
\end{definition}

Let $(X, d)$ be a metric space and $(S, \Sigma, v)$ be a probability space, given $\rho, \kappa \in \mathcal{P}(X)$, the \textit{Wasserstein $L^{\infty}$ distance} between $\rho$ and $\kappa$ is defined as

\begin{equation} \label{Wasser.}
	W^{\infty}(\rho, \kappa):=\inf \left\{\begin{array}{l}
		\|d(f(x), g(x))\|_{L^{\infty},v} \mid f: S \rightarrow X  \text { Borel }, \\
		f_{\#} v=\rho \text { and } g: S \rightarrow X  \text { Borel, } g_{\#} v=\kappa
	\end{array}\right\}
\end{equation}

Here $\|h\|_{L^{\infty},v}$ denotes the supremum of $|h|$ over $S$, discarding sets of $v$-measure zero. We call $f, g$ the \textit{transport maps}. Thanks to Strassen's Theorem, one can check $W^{\infty}$ is truly a metric, as explained in Givens and Shortt \cite{GS84}.

\begin{remark}
	One might wonder whether we could find such transport maps. It turns out at least in some cases we can always find them. See for example \cite[Theorem 1.11, Theorem 5.2]{ABS21}.
\end{remark}

\begin{remark}\label{independent of measure space}
	It turns out  $W^{\infty}(\rho, \kappa)$ is actually independent of the probability space $(S, \Sigma, v)$ we choose \cite[Section 5]{McC06}. Due to this fact, we can choose $S=[0,1]$ with Lebesgue measure, or $(S, \Sigma, v)=(X, \mathcal{B}(X), \rho)$. If we consider the latter case, we have 
		
	\begin{equation}
		\begin{aligned}
			W^{\infty}(\rho, \kappa)&:=\inf \left\{\begin{array}{l}
				\|d(f(x), g(x))\|_{L^{\infty},\rho} \mid f: X \rightarrow X  \text { Borel}, \\
				f_{\#} \rho=\rho \text { and } g: X \rightarrow X  \text { Borel, } g_{\#} \rho=\kappa
			\end{array}\right\}\\
			&\leq \inf \left\{\|d(x, g(x))\|_{L^{\infty},\rho} \mid g: X \rightarrow X \text{ Borel}, g_{\#} \rho=\kappa\right\}
		\end{aligned}
	\end{equation}
	
	The right side of the inequality can be viewed as a (generalized) Monge's formulation of the optimal transport problem (see, e.g., \cite[Section 1.2 and Section 8.1]{ABS21}).
\end{remark}

\begin{remark}
	Notice in \cite[Section 8.1]{ABS21}, the authors define another Wasserstein $L^{\infty}$ distance (with an abuse of notation denoted by $\widetilde{W}^{\infty}$) via \textit{transport plans} and Kantorovich's formulation of the optimal transport problem. In general, we also have
	\[
	\widetilde{W}^{\infty}(\rho, \kappa) \leq \inf \left\{\|d(x, g(x))\|_{L^{\infty}(\rho)} \mid g: X \rightarrow X \text{ Borel}, g_{\#} \rho=\kappa\right\}
	\]
	see \cite[Section 2.2]{ABS21}. Moreover, when we consider Wasserstein $L^p$ distance given in \cite{ABS21}, $p<\infty$, under additional assumption(s) if needed, we have $$\widetilde{W}^p(\rho, \kappa)=\left(\inf \{\int_X d^p(x, g(x)) d \rho(x) \mid g: X \rightarrow X \text{ Borel}, g_{\#} \rho=\kappa\}\right)^{\frac{1}{p}}$$ 
	see \cite[Section 2.2 or Section 5.1]{ABS21}. However, the link between $\widetilde{W}^{\infty}$ and $W^{\infty}$ seems not clear. In the following, our arguments will be based on $W^{\infty}$'s definition.
\end{remark}



Since the functions in ${R}\left(\mathbb{R}^3\right)$ have unit mass, we can view them as measures. That is, given $\rho \in {R}\left(\mathbb{R}^3\right)$, define $\widetilde{\rho}$ as the following: $\forall B \in \mathcal{B}\left(\mathbb{R}^3\right), \widetilde{\rho}(B):=\int_B \rho\,dx$. One can check in this definition, $\widetilde{\rho} \in \mathcal{P}\left(\mathbb{R}^3\right), \widetilde{\rho}$ is absolutely continuous with respect to $\mu$ (denoted by $\widetilde{\rho} \ll \mu$), where $\mu$ is Lebesgue measure, and the derivative (density) of $\widetilde{\rho}$ with respect to $\mu$ is $\rho$ \cite[Section 1.6]{EG15}. With an abuse of notation, we denote $\widetilde{\rho}$ by $\rho$ as well. Therefore, we can define the Wasserstein $L^{\infty}$ distance between two functions in ${R}\left(\mathbb{R}^3\right)$. Note that although $W^{\infty}(\rho, \kappa)$ may be infinite on ${R}\left(\mathbb{R}^3\right)$, it is finite whenever $\rho$ and $\kappa$ are of bounded support by Lemma \ref{properties of Wasser.} (i) below.

As McCann explained \cite[Section 5]{McC06}, it turns out the Wasserstein $L^{\infty}$ metric is not unphysically strong: for instance, in the Lagrangian description of fluid mechanics \cite[Section 1.2]{BC18} \cite{Lag24} \cite[Chapter 1]{Lam32}, the state of a fluid system is specified by its original density profile $\rho \in {R}\left(\mathbb{R}^3\right)$, together with the positions of the fluid particles as a function of time. In Remark \ref{stable under L^infty-small perturbation }, we already see $\rho_t \in {R}\left(\mathbb{R}^3\right)$, as a function of time, evolves continuously with respect to the topology induced by Wasserstein $L^{\infty}$ metric, at least for bounded fluid velocities. Here ${Y_t}_{\#} \rho=\rho_t$, and $Y_t(y) \in {R}\left(\mathbb{R}^3\right)$ represents the position of the fluid at time $t$ which originated at $Y_0(y)=y$. Therefore, a local energy minimum $\rho\in {R}_{0}\left(\mathbb{R}^3\right)$ is physically stable.

We notice that the definition of Wasserstein distance can be generated to two functions in 
$$t {R}\left(\mathbb{R}^3\right)=\left\{\left.\rho \in L^{\frac{4}{3}}\left(\mathbb{R}^3\right) \right\rvert\, \rho \geq 0, \int_{\mathbb{R}^3} \rho\,dx=t\right\}$$
Here $t\neq0$. Moreover, one can observe that given $\rho, \sigma$ in ${R}\left(\mathbb{R}^3\right), W^{\infty}(t \rho, t \sigma)=W^{\infty}(\rho, \sigma)$ (or $\widetilde{W}^{\infty}(t \rho, t \sigma)=\widetilde{W}^{\infty}(\rho, \sigma))$, which is different from Wasserstein $L^p$ distance.

We give some elementary properties required of $W^{\infty}$. The first 5 properties essentially are picked from \cite[Lemma 5.1]{McC06}. Here $\operatorname{spt}(\rho-\kappa) \subset \mathbb{R}^3$ denotes the support of the signed measure $\rho-\kappa$, while a $\delta$-neighbourhood is defined as in \ref{notations} (viii).

\begin{lemma}[Simple Properties of the Wasserstein $L^{\infty}$ Metric]\label{properties of Wasser.}
	Let $\rho, \kappa$ in $t {R}\left(\mathbb{R}^3\right)$, then
	\begin{enumerate}[(i)]
		\item [\mylabel{i}{($i$)}] $W^{\infty}(\rho, \kappa)$ does not exceed the diameter of $\operatorname{spt}(\rho-\kappa)$;
		\setcounter{enumi}{1}
		\item if $W^{\infty}(\rho, \kappa)<\delta$, each connected component of the $\delta$-neighbourhood of spt $\rho$ has the same mass for $\kappa$ as for $\rho$;
		\item $W^{\infty}\left(\rho, f_{\#} \rho\right) \leq\|f-i d\|_{\infty, \rho}$ for $f: \mathbb{R}^3 \rightarrow \mathbb{R}^3$ measurable and $i d(x):=x$;
		\item the centers of mass satisfy $|\bar{x}(\rho)-\bar{x}(\kappa)| \leq W^{\infty}(\rho, \kappa)$;
		\item the moment of inertia $I(\rho)$ depends continuously on $\rho$;
		\item given $\rho \in t {R}\left(\mathbb{R}^3\right), \forall \epsilon>0, \exists \sigma \in t {R}\left(\mathbb{R}^3\right) \cap L^{\infty}\left(\mathbb{R}^3\right)$, such that $W^{\infty}(\rho, \sigma)<\epsilon$. Moreover, if spt $\rho$ is bounded, then spt $\sigma$ is also bounded.
	\end{enumerate}
\end{lemma}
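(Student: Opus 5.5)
Parts (i)–(v) are essentially \cite[Lemma 5.1]{McC06}, and I would only sketch them, working throughout with the representation $(S,\Sigma,v)=(\mathbb{R}^3,\mathcal{B},\rho/t)$ from Remark \ref{independent of measure space}.

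For (i), write $\rho=\nu+\alpha$ and $\kappa=\nu+\beta$, where $\nu=\min(\rho,\kappa)$ is the common part. Then $\alpha$ and $\beta$ have equal mass and are carried by $\operatorname{spt}(\rho-\kappa)$. Take a map $g$ that is the identity on the $\nu$-part and pushes $\alpha$ onto $\beta$ by any Borel map between equal-mass measures on a set of diameter $D$; such a map exists, for instance via monotone rearrangement through $[0,1]$ with Lebesgue measure. Then $|g(x)-x|\le D$ $\rho$-a.e.

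Part (iii) is immediate from the Monge bound in Remark \ref{independent of measure space}, taking $f=\mathrm{id}$.

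For (iv), take near-optimal maps $f,g$ with $\|f-g\|_\infty<W^\infty+\epsilon$. Then
\[
|\bar x(\rho)-\bar x(\kappa)|=\Bigl|\tfrac1t\int (f-g)\,dv\Bigr|\le W^\infty+\epsilon .
\]
For (ii), if $\|f-g\|_\infty<\delta$, then $g(s)$ lies in the $\delta$-neighbourhood $N_\delta$ of $\operatorname{spt}\rho$ for a.e.\ $s$, and it lies in the same connected component as $f(s)$, since the segment from $f(s)$ to $g(s)$ stays inside $N_\delta$. So mass is preserved component-wise.

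For (v), expand $I$ via the pushforward as $\int r^2(f-\bar x(\rho))\,dv$, and compare it with the same expression for $g$. We have $\bigl|r^2(a)-r^2(b)\bigr|\le |a-b|(|a|+|b|)$, and the displacement is controlled by (iv) plus $\|f-g\|_\infty$. This yields
\[
|I(\rho)-I(\kappa)|\le C\,W^\infty\bigl(\sqrt{I(\rho)}+W^\infty\bigr),
\]
using Cauchy–Schwarz to handle $\int|f-\bar x|\,dv$ when it is finite.

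The new content is (vi). The plan is to build a transport map $g$ that displaces each point by less than $\epsilon$ and whose pushforward is bounded. Tile $\mathbb{R}^3$ by half-open cubes $Q_k$ of side $\ell$ with $\sqrt3\,\ell<\epsilon$. Define $\sigma$ cube by cube: on each $Q_k$, set $\sigma=\frac{\rho(Q_k)}{|Q_k|}\mathbf 1_{Q_k}$, i.e.\ redistribute the mass of $\rho$ in $Q_k$ uniformly over $Q_k$. Within each cube, both $\rho|_{Q_k}$ and $\sigma|_{Q_k}$ are absolutely continuous with equal mass, so a Borel map $g_k:Q_k\to Q_k$ with $(g_k)_\#\rho|_{Q_k}=\sigma|_{Q_k}$ exists. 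One can produce it, for example, through the Knothe–Rosenblatt rearrangement, or through isomorphism of nonatomic standard measure spaces with $[0,1]$ applied to both measures, composing the two maps; this is legitimate because the definition of $W^\infty$ is independent of the probability space used. Gluing the $g_k$ gives $|g(x)-x|\le\operatorname{diam}Q_k<\epsilon$, so $W^\infty(\rho,\sigma)<\epsilon$.

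Two points remain in (vi). First, $\sigma\in L^\infty$ requires $\sup_k \rho(Q_k)/\ell^3<\infty$, and this holds since $\rho(Q_k)\le t$ gives the bound $t\ell^{-3}$. Second, $\sigma\in L^{4/3}$ with mass $t$ follows from $\sigma\in L^1\cap L^\infty$. If $\operatorname{spt}\rho$ is bounded, only finitely many cubes carry mass, so $\operatorname{spt}\sigma$ is bounded.

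I expect the main obstacle to be the rigorous existence of the Borel map $g_k$ pushing $\rho|_{Q_k}$ to $\sigma|_{Q_k}$. I would handle it through the probability-space independence: parametrize both measures by $[0,1]$ with Lebesgue measure using measurable maps $f,g$ into $Q_k$. These exist for any Borel probability measure on a Polish space, which avoids needing an invertible map and keeps $|f-g|\le \operatorname{diam}Q_k$ pointwise.
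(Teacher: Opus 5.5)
Your proposal is correct. For (vi), which is the only part the paper proves itself (it cites McCann for (i)--(v), which you only sketch), you take a genuinely simpler route.

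The paper does not average. It fixes $R$ with $\int_{\{\rho>R\}}\rho\,dx<V/4$, caps $\rho$ at $2R$ on each small cube, and spreads the excess uniformly over the part of the cube where $\rho\le R$. It checks that the added constant is below $1/3$, so $\sigma\le 2R$. It then applies (i) cube by cube, takes near-optimal maps $f_n,g_n$ on $(C(x_n),c_n\mu_n)$, modifies their ranges, and glues them through $\nu=\sum_n c_n\mu_n$ to get $W^\infty(\rho,\sigma)\le 3\epsilon/4$.

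Your cube averages give $\|\sigma\|_{L^\infty}\le t\ell^{-3}$ with no threshold bookkeeping. Because your within-cube maps take values in $\overline{Q_k}$ from the outset (Monge form, source $\rho$), gluing is just $g=g_k$ on $Q_k$, with $|g(x)-x|\le\sqrt3\,\ell<\epsilon$. The only input is that a nonatomic measure on a cube can be pushed by a Borel map onto any measure of equal mass there, a fact the paper also uses indirectly through (i).

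The paper's construction keeps more of the profile of $\rho$: $\sigma=\rho$ a.e.\ on $\{R<\rho<2R\}$ and $\sigma\ge\min(\rho,2R)$ a.e. Nothing downstream needs this, since Lemma \ref{diff. at mini.} only uses some bounded $\sigma$ in the $W^\infty$-ball.

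One small correction in (i): a single map on the source $\rho$ cannot be ``the identity on the $\nu$-part'' where $\nu$ and $\alpha$ overlap. No splitting is needed, though. Since $\rho=\kappa$ off $K=\operatorname{spt}(\rho-\kappa)$, take $g=\mathrm{id}$ there and any Borel map $K\to K$ pushing $\rho|_K$ to $\kappa|_K$. Such a map exists because $\rho$ is nonatomic and $\rho(K)=\kappa(K)$.
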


\begin{proof}
	{\rm
		($i$-$v$) comes from \cite[Lemma 5.1]{McC06} with replacing ${R}\left(\mathbb{R}^3\right)$ by $t {R}\left(\mathbb{R}^3\right)$, which proofs are from the definition of $W^{\infty}$; the idea behind part (iii) can also be seen in Remark \ref{stable under L^infty-small perturbation }.
		
		To prove ($vi$), the intuitive strategy is to ``cut off'' the function values of  $\rho$ at the region where the value is relatively large, and then redistribute the value cut off to surrounding area where the values are relatively smaller, and then prove that the $W^\infty$ distance between the rearranged function and the original function is very small. To be precise, we carry out the following procedure. Without loss of generality we assume $t=1$, and we first divide the space $\mathbb{R}^3$ as 
		$$\mathbb{R}^3=\left(\cup_{n=1}^{\infty} C\left(x_n\right)\right) \cup D$$
		Here $\left\{C\left(x_n\right)\right\}$ is a family of (countably) many disjoint small open cubes, each with a diagonal length equal to $\frac{\epsilon}{2}$, volume equal to $V:=\left(\frac{\epsilon}{2 \sqrt{3}}\right)^3$, centered at $x_n$. $D$ is the union of the remaining boundaries, $0\in D$, and $\mu(D)=0$. Since 
		$$\int_{\mathbb{R}^3} \rho\,dx=1=\sum_{N=0}^{\infty} \int_{\{x \mid N \leq \rho(x)<N+1\}} \rho\,dx$$ 
		then $\exists R>1$, such that 
		$$\int_{\{x \mid \rho(x)>R\}} \rho\,dx<\frac{V}{4}$$
		Let $\rho_n=\rho \cdot \mathbf{1}_{C\left(x_n\right)}$, where $\mathbf{1}_{C\left(x_n\right)}$ is indicator function of set $C\left(x_n\right)$, and define
		$$
		\sigma_n(x)=\left\{\begin{array}{ll}
			2 R, & x \in\left\{x \mid \rho_n(x) \geq 2 R\right\} \cap C\left(x_n\right) \\
			\rho_n(x), & x \in\left\{x \mid R<\rho_n(x)<2 R\right\} \cap C\left(x_n\right) \\
			\rho_n(x)+\frac{\int_{\left\{x \mid \rho_n(x) \geq 2 R\right\}}\left(\rho_n-2 R\right)\,dx}{\mu\left(\left\{x \mid \rho_n(x) \leq R\right\} \cap C\left(x_n\right)\right)}, & x \in\left\{x \mid \rho_n(x) \leq R\right\} \cap C\left(x_n\right) \\
			0, & x \in \mathbb{R}^3 \backslash C\left(x_n\right)
		\end{array}\right.
		$$
		Since $\int_{\{x \mid \rho(x)>R\}} R\,dx<\int_{\{x \mid \rho(x)>R\}} \rho\,dx<\frac{V}{4}$, we know $\mu(\{x \mid \rho(x)>R\})<\frac{V}{4 R}<\frac{V}{4}$, then 
		$$
		\mu\left(\left\{x \mid \rho_n(x) \leq R\right\} \cap C\left(x_n\right)\right)=\mu\left(C\left(x_n\right)\right)-\mu\left(\left\{x \mid \rho_n(x)>R\right\} \cap C\left(x_n\right)\right)>\frac{3 V}{4}
		$$
		On the other hand, we have
		$$
		\int_{\left\{x \mid \rho_n(x) \geq 2 R\right\}}\left(\rho_n-2 R\right)\,dx \leq \int_{\left\{x \mid \rho_n(x) \geq 2 R\right\}} \rho_n \,dx\leq \int_{\{x \mid \rho(x)>R\}} \rho\,dx<\frac{V}{4}
		$$
		We obtain
		$$
		\frac{\int_{\left\{x \mid \rho_n(x) \geq 2 R\right\}}\left(\rho_n-2 R\right)\,dx}{\mu\left(\left\{x \mid \rho_n(x) \leq R\right\} \cap C\left(x_n\right)\right)}<\frac{1}{3}<\frac{3}{4}
		$$
		Therefore, if $x \in \left\{x \mid \rho_n(x) \leq R\right\} \cap C\left(x_n\right)$, we get $$\sigma_n(x) \leq \rho_n (x)+\frac{3}{4}\leq R+\frac{3}{4}<2 R$$
		Then we have $\sigma_n(x) \leq 2 R$ for all $x \in \mathbb{R}^3$. One can also check $\int_{\mathbb{R}^3} \rho_n\,dx=\int_{\mathbb{R}^3} \sigma_n\,dx$ (notice $\left\{x \mid \rho_n(x) \geq 2 R\right\}=\left\{x \mid \rho_n(x) \geq 2 R\right\} \cap C\left(x_n\right)$). Since both $\rho_n$ and $\sigma_n$ are supported in $C\left(x_n\right)$ and $C\left(x_n\right)$ has diameter $\frac{\epsilon}{2}$, then diameter of $\operatorname{spt}\left(\rho_n-\sigma_n\right)$ is not larger than $\frac{\epsilon}{2}$. Thanks to Property \ref{i}, if $\int_{\mathbb{R}^3} \rho_n\,dx=\int_{\mathbb{R}^3} \sigma_n\,dx>0$, we have $W^{\infty}\left(\rho_n, \sigma_n\right) \leq \frac{\epsilon}{2}$, where $W^{\infty}$ is the Wasserstein $L^{\infty}$ distance on $\mathcal{M}(\mathbb{R}^3)$. 
		
		Let $\sigma=\sum_{n=1}^{\infty} \sigma_n$, then $\sigma \in L^{\infty}\left(\mathbb{R}^3\right)$ and $\|\sigma\|_{L^{\infty}} \leq 2 R$. We know 
		$$\int_{\mathbb{R}^3}\sigma\,dx = \sum_{n}\int_{\mathbb{R}^3}\sigma_{n}\,dx = \sum_{n}\int_{\mathbb{R}^3}\rho_{n}\,dx = {\int_{\mathbb{R}^3}\rho\,dx} - {\int_{D}\rho\,dx}\stackrel{\mu\left(D\right)=0}{=}\int_{\mathbb{R}^3} \rho\,dx$$
		In particular, $\sigma$ has the same finite measure as $\rho$, and we can now estimate the Wasserstein $L^\infty$ metric between them. We claim: $W^{\infty}(\rho, \sigma)<\epsilon$. 
		
		In order to show the claim $W^{\infty}(\rho, \sigma)<\epsilon$, we first consider if $\int_{\mathbb{R}^3} \rho_n\,dx=\int_{\mathbb{R}^3} \sigma_n\,dx>0$, since $W^{\infty}\left(\rho_n, \sigma_n\right) \leq \frac{\epsilon}{2}$, we can set $\left( {S,\Sigma,\nu} \right) = \left( {C\left( x_{n} \right),M_{n},c_{n}\mu_{n}} \right)$, where $M_n$ is the collection of all Lebesgue measurable sets in $C\left( x_{n} \right)$, $c_n$ satisfies 
		$$c_{n}\mu_{n}\left( {C\left( x_{n} \right)} \right) = c_{n}V = \int_{\mathbb{R}^3}\rho_{n}\,dx = \int_{\mathbb{R}^3}\sigma_{n}\,dx>0$$
		and $\mu_n$ satisfies $\mu_n(E)=\mu(E\cap C(x_n))$ for all measurable sets $E$ in $\mathbb{R}^3$, recall $\mu$ is the Lebesgue measure on $\mathbb{R}^3$. In particular, we know $c_n>0$, and $\mu_n(C(x_n)^\mathsf{c})=0$, here $C(x_n)^\mathsf{c}=\mathbb{R}^3 \setminus C(x_n)$ is the complement of $C(x_n)$. Easy to verify that $\mu_n$ is a measure in  $\mathbb{R}^3$ (see for example \cite[Exercise 10 in Section 1.3]{Fol13}). Moreover, its restriction to $C(x_n)$  coincides with the Lebesgue measure on $C(x_n)$. Therefore, $\left( {C\left( x_{n} \right),M_{n},c_{n}\mu_{n}} \right)$ is a (finite) measure space, with $c_{n}u_{n}\left( {C\left( x_{n} \right)} \right) = \rho_{n}\left( \mathbb{R}^{3} \right) = {\int_{\mathbb{R}^3}\rho_{n}\,dx}$. Thanks to Remark \ref{independent of measure space}, we can choose $\left( {C\left( x_{n} \right),M_{n},c_{n}\mu_{n}} \right)$ as the measure space in the definition of $W^{\infty}\left(\rho_n, \sigma_n\right)$\footnote{The reason for choosing such a measure space is to find transport maps $f_n$ and $g_n$  defined on $C(x_n)$, which will facilitate the subsequent definition of  $\widetilde{f_n}$, $\widetilde{g_n}$, $f$ and $g$,  and the analysis of their properties.}, then we can find $f_n: C\left(x_n\right) \rightarrow \mathbb{R}^3$ and $g_n: C\left(x_n\right) \rightarrow \mathbb{R}^3$ with ${f_{n}}_{\#} (c_n\mu_n)=\rho_n, {g_{n}}_{\#} (c_n\mu_n)=\sigma_n$, and 
		$$\left\|f_n-g_n\right\|_{L^{\infty}\left(C\left(x_n\right)\right), c_n\mu_n}<W^{\infty}\left(\rho_n, \sigma_n\right)+\frac{\epsilon}{4}<\frac{3 \epsilon}{4}$$ 
		Notice since $c_n>0$, we have $c_n\mu_n \ll \mu_n$ and $\mu_n \ll c_n\mu_n$, and then $$\left\|f_n-g_n\right\|_{L^{\infty}\left(C\left(x_n\right)\right), \mu_n}=\left\|f_n-g_n\right\|_{L^{\infty}\left(C\left(x_n\right)\right), c_n\mu_n}<\frac{3 \epsilon}{4}$$ 
		Therefore, given a measurable set $B \subset \mathbb{R}^{3}$, we have $c_n\mu_{n}\left( {f_{n}^{- 1}(B)} \right) = \rho_{n}(B)$. Since we know  $\rho_n$ is supported in $C\left(x_n\right)$, then $\rho_{n}(B) = \rho_{n}\left( {B \cap C\left( x_{n} \right)} \right)$. Therefore, 
		$$c_n\mu_{n}\left( {f_{n}^{- 1}(B)} \right) = \rho_{n}(B) = \rho_{n}\left( {B \cap C\left( x_{n} \right)} \right) = c_n\mu_{n}\left( {f_{n}^{- 1}\left( {B \cap \left( {C\left( x_{n} \right)} \right)} \right)} \right)$$
		In particular, we know 
		$$c_n\mu_{n}\left( {f_{n}^{- 1}((C(x_n))^{\mathsf{c}})} \right) =c_n\mu_{n}\left( {f_{n}^{- 1}\left( \emptyset \right)} \right)=c_n\mu_{n}\left( {\left( \emptyset \right)} \right)=0$$
		and then $\mu_{n}\left( {f_{n}^{- 1}((C(x_n))^{\mathsf{c}})} \right)=0$ since $c_n>0$. Then we can define $\widetilde{f_n}:  C(x_n) \rightarrow \mathbb{R}^3$ as the following:
		$$\widetilde{f_{n}}(x) = \left\{ \begin{matrix}
			{f_{n}(x),~~~\text{if}~~ f_{n}(x) \in C\left( x_{n} \right)} \\
			{0,~~~\text{if}~~ f_{n}(x) \in (C\left( x_{n} \right))^\mathsf{c}}
		\end{matrix} \right. \footnote{The motivation behind this definition is to restrict the range to  $C(x_n) \cup \{0\}$. This setup helps to give the subsequent definition of $f$  and analyze properties of $f$.}
		$$
		Notice by construction we know $0\in D$, so $0\notin C(x_n)$ for any $n$. If  $0\notin B$, we can check ${\widetilde{f_{n}}}^{- 1}(B) = f_{n}^{- 1}\left( {B \cap C\left( x_{n} \right)} \right)$, then we have 
		$$c_{n}\mu_{n}\left( {{\widetilde{f_{n}}}^{- 1}(B)} \right) = c_{n}\mu_{n}\left( {f_{n}^{- 1}\left( {B \cap C\left( x_{n} \right)} \right)} \right) = \rho_{n}(B)$$
		If $0\in B$, we can check ${\widetilde{f_{n}}}^{- 1}(B) = f_{n}^{- 1}\left( {B \cap C\left( x_{n} \right)} \right) \cup f_{n}^{- 1}\left( {C\left( x_{n} \right)^\mathsf{c}} \right)$, and 
		$$
		\begin{aligned}
			&\quad \ c_{n}\mu_{n}\left( {f_{n}^{- 1}\left( {B \cap C\left( x_{n} \right)} \right)} \right) \\
			&\leq c_{n}\mu_{n}\left( {{\widetilde{f_{n}}}^{- 1}(B)} \right) \\
			&= c_{n}\mu_{n}\left( {f_{n}^{- 1}\left( {B \cap C\left( x_{n} \right)} \right) \cup f_{n}^{- 1}\left(  {C\left( x_{n} \right)^\mathsf{c}} \right) } \right) \\
			&\leq c_{n}\mu_{n}\left( {f_{n}^{- 1}\left( {B \cap C\left( x_{n} \right)} \right)} \right) + c_{n}\mu_{n}\left( {f_{n}^{- 1}\left( {C\left( x_{n} \right)^\mathsf{c}} \right)} \right) \\
			&=c_{n}\mu_{n}\left( {f_{n}^{- 1}\left( {B \cap C\left( x_{n} \right)} \right)} \right) \\
			&=\rho_n(B)
		\end{aligned}
		$$
		
		Therefore, we know $c_{n}\mu_{n}\left( {{\widetilde{f_{n}}}^{- 1}(B)} \right)= \rho_n (B)=c_{n}\mu_{n}\left( {f_{n}^{- 1}\left( {B \cap C\left( x_{n} \right)} \right)} \right)$ for any measurable set $B\subset \mathbb{R}^3$, and thus $\widetilde{f_{n}}_{\#} (c_n\mu_n)=\rho_n$. Similarly, we can construct $\widetilde{g_n}$ such that $\widetilde{g_{n}}_{\#} (c_n\mu_n)=\sigma_n$. Since $\left\|f_n-g_n\right\|_{L^{\infty}\left(C\left(x_n\right)\right), \mu_n}<\frac{3 \epsilon}{4}$, we can find $U_{n} \subset C\left( x_{n} \right)$ with $\mu_{n}\left( U_{n} \right) = 0$, and $${\sup\limits_{C{(x_{n})}\backslash U_{n}}\left| f_{n} - g_{n} \right|} = \left\| {f_{n} - g_{n}} \right\|_{L^{\infty}{({C{(x_{n})}})},\mu_{n}} < \frac{3\epsilon}{4}$$ 
		Take $\widetilde{U_{n}} \coloneq U_{n} \cup f_{n}^{- 1}\left( \left( {C\left( x_{n} \right)} \right)^\mathsf{c} \right) \cup g_{n}^{- 1}\left( \left( {C\left( x_{n} \right)} \right)^\mathsf{c} \right)$, since we know 
		$$\mu_{n}\left( {f_{n}^{- 1}((C(x_n))^{\mathsf{c}})} \right)=\mu_{n}\left( {g_{n}^{- 1}((C(x_n))^{\mathsf{c}})} \right)=0$$
		then $\mu_{n}\left( \widetilde{U_{n}} \right) = 0$, and we have
		$$
		{\sup\limits_{C(x_n)\backslash\widetilde{U_{n}}}\left| \widetilde{f_{n}} - \widetilde{g_{n}}~ \right|} ={\sup\limits_{C{(x_{n})}\backslash\widetilde{U_{n}}}\left| {f_{n}} - {g_{n}}~ \right|} \leq {\sup\limits_{C{(x_{n})}\backslash U_{n}}\left| f_{n} - g_{n} \right|} = \left\| {f_{n} - g_{n}} \right\|_{L^{\infty}{({C{(x_{n})}})},\mu_{n}} < \frac{3\epsilon}{4}
		$$
		Therefore, $\left\|\widetilde{f_n}-\widetilde{g_n}\right\|_{L^{\infty}\left(C(x_n)\right), \mu_n}<\frac{3 \epsilon}{4}$.
		
		We define $\nu = {\sum\limits_{n}{c_{n}\mu_{n}}}$, where $c_n$ satisfies $c_{n}\mu_{n}\left( {C\left( x_{n} \right)} \right) = c_{n}V = \int_{\mathbb{R}^3}\rho_{n}\,dx = \int_{\mathbb{R}^3}\sigma_{n}\,dx$. Notice here we allow $c_n=0$. We can see $\nu$ is a measure on $\mathbb{R}^3$ and 
		$$\nu\left( \mathbb{R}^{3} \right) = \sum_{n}c_{n}\mu_{n}\left( \mathbb{R}^{3} \right) = \sum_{n}c_{n}\mu_{n}\left( {C\left( x_{n} \right)} \right) = \sum_{n}\int_{\mathbb{R}^3}\rho_{n}\,dx = \int_{\mathbb{R}^3}\rho\,dx = \int_{\mathbb{R}^3}\sigma\,dx=1 <\infty$$
		Thus $\nu$ is a finite measure.
		
		Denote $N_{0} = \left\{ n \middle| c_{n} = 0 \right\}$, and $N_{1} = \left\{ n \middle| c_{n} > 0 \right\}$. We further define $f: \mathbb{R}^3 \rightarrow \mathbb{R}^3$ as 
		
		$$f(x)=\left\{\begin{array}{ll}\widetilde{f_n}(x),& x \in C\left(x_n\right) \text{with} ~n \in N_1 \\ x, &\text{otherwise}\end{array}\right.$$
		
		Similarly, let $g: \mathbb{R}^3 \rightarrow \mathbb{R}^3$ be 
		
		$$g(x)=\left\{\begin{array}{ll}\widetilde{g_n}(x), &x \in C\left(x_n\right) \text{with} ~n \in N_1\\  x, &\text{otherwise}\end{array}\right.$$
		
		Then 
		$$\|f-g\|_{L^{\infty}\left(\mathbb{R}^3\right), \nu} \leq \sup\limits_{\{n|c_n>0\} }\left\|\widetilde{f_n}-\widetilde{g_n}\right\|_{L^{\infty}\left(C\left(x_n\right)\right), \mu_n} \leq \frac{3 \epsilon}{4}$$
		Given a measurable set $B \subset \mathbb{R}^3$, let $B_n=B \cap C\left(x_n\right)$ be the measurable set in $C\left(x_n\right), B_D=B \cap D$, we know $\mu(D) = 0$ implies $\mu\left(B_D\right)=0$. We also have if $n\in N_0$, then 
		$$\rho\left( B_{n} \right) = \rho_{n}\left( B_{n} \right) \leq \rho_{n}\left( {C\left( x_{n} \right)} \right) = {\int_{\mathbb{R}^3}\rho_{n}\,dx} = 0$$
		and then 
		\[
		\begin{aligned}
			\rho(B) &= \int_B \rho\,dx \\
			&= \int_{\cup_{n=1}^{\infty} B_n} \rho\,dx + \int_{B_D} \rho\,dx \\
			&= \int_{\cup_{n=1}^{\infty} B_n} \rho\,dx \qquad (\text{since }\mu(B_D)=0)\\
			&= \int_{\cup_{n \in N_1} B_n} \rho\,dx \\
			&= \sum_{n \in N_1} \rho(B_n) \\
			&= \sum_{n \in N_1} \rho_n(B_n) \\
			&= \sum_{n \in N_1} c_n \mu_n\bigl( \widetilde{f_n}^{-1}(B_n) \bigr) \qquad (\text{since } \widetilde{f_n}_{\#}(c_n\mu_n)=\rho_n)
		\end{aligned}
		\]
		
		We can check $f^{- 1}\left( \left\{ 0 \right\} \right) = {\bigcup\limits_{n \in N_{1}}{{\widetilde{f}}_{n}}^{- 1}}\left( \left\{ 0 \right\} \right) \cup \left\{ 0 \right\}$. Then if $n\in N_1$, since ${\widetilde{f_{n}}}_{\#}\left( {c_{n}\mu_{n}} \right) = \rho_{n}$, we have $$c_{n}\mu_{n}\left( {f^{- 1}\left( \left\{ 0 \right\} \right)} \right) = c_{n}\mu_{n}\left( {{\widetilde{f_{n}}}^{- 1}\left( \left\{ 0 \right\} \right)} \right) = \rho_{n}\left( \left\{ 0 \right\} \right) = 0$$
		Easy to check if $n\in N_0$, $f^{- 1}\left( \left\{ 0 \right\} \right) \cap C\left( x_{n} \right) = \varnothing$, then $\mu_{n}\left( {f^{- 1}\left( \left\{ 0 \right\} \right)} \right) = 0$. Thus we know $v\left( {f^{- 1}\left\{ 0 \right\}} \right) = {\sum_{n}{c_{n}\mu_{n}\left( {f^{- 1}\left( \left\{ 0 \right\} \right)} \right)}} =0$ (alternatively, this can also be seen by observing that $c_n = 0$ for all $n \in N_0$).
		
		If $n\in N_1$, we can check
		\begin{enumerate}
			\item  ${\widetilde{f_{n}}}^{- 1}\left( B_n \right) = f^{- 1}\left( B_n \right)$
			\item $f^{- 1}\left( {B\backslash C\left( x_{n} \right)} \right) \cap C\left( x_{n} \right) \subset {\widetilde{f_{n}}}^{- 1}\left( \left\{ 0 \right\} \right) \subset f^{- 1}\left( \left\{ 0 \right\} \right)$, together with $\mu_n$ is supported in $C(x_n)$, we know 
			$$\mu_{n}\left( {f^{- 1}\left( {B\backslash C\left( x_{n} \right)} \right)} \right) = \mu_{n}\left( {f^{- 1}\left( {B\backslash C\left( x_{n} \right)} \right) \cap C\left( x_{n} \right)} \right) = 0$$
			\item 
			$$
			\begin{aligned}
				\mu_{n}\left( {f^{- 1}(B)} \right) &= \mu_{n}\left( {f^{- 1}\left( B_n \right) \cup f^{- 1}\left( {B\backslash C\left( x_{n} \right)} \right)} \right) \\
				&= \mu_{n}\left( {f^{- 1}\left( B_n \right)} \right) + \mu_{n}\left( {f^{- 1}\left( {B\backslash C\left( x_{n} \right)} \right)} \right) \\
				&= \mu_{n}\left( {f^{- 1}\left( B_n \right)} \right) \\
				&= \mu_{n}\left( {{\widetilde{f_{n}}}^{- 1}\left( B_n \right)} \right)
			\end{aligned}
			$$
		\end{enumerate}
		Therefore, we can compute
		$$
		\begin{aligned}
			\rho(B)&={\sum_{n \in N_{1}}{c_{n}\mu_{n}\left( {{\widetilde{f_{n}}}^{- 1}\left( B_{n} \right)} \right)}}\\
			&={\sum_{n \in N_{1}}{c_{n}\mu_{n}\left( {f^{- 1}\left( B\right)} \right)}}\\
			&={\sum_{n \in N_{1}}{c_{n}\mu_{n}\left( {f^{- 1}\left( B\right)} \right)}}+{\sum_{n \in N_{0}}{c_{n}\mu_{n}\left( {f^{- 1}\left( B\right)} \right)}} \qquad (\text{since } c_n=0 \text{ when } n\in N_0)\\
			&={\sum_{n=1}^{\infty}{c_{n}\mu_{n}\left( {f^{- 1}\left( B\right)} \right)}}\\
			&=\nu(f^{-1}(B))\\
			&=f_{\#}\nu (B)
		\end{aligned}	
		$$

		Similarly, $g_{\#} \nu(B)=\sigma(B)$. Recall $\sigma \in L^\infty(\mathbb{R}^3)$. Therefore, $f_{\#} \nu=\rho$ and $g_{\#} \nu=\sigma$. Moreover, $W^{\infty}(\rho, \sigma) \leq \|f-g\|_{L^{\infty}\left(\mathbb{R}^3\right), \nu} \leq \frac{3 \epsilon}{4}<\epsilon$. By the construction of $\sigma$ we know if spt $\rho$ is bounded, then $\operatorname{spt} \sigma$ is also bounded.
	}
\end{proof}


In Theorem \ref{Properties of LEM} we assume $(\rho, v)$ minimizes $E(\rho, v)$ locally on ${R}_0\left(\mathbb{R}^3\right) \times {V}\left(\mathbb{R}^3\right)$, which implies $\rho$ minimizes $E_J(\rho)$ locally on ${R}_0\left(\mathbb{R}^3\right)$ by Corollary \ref{velocity-free}. 
Although in the paper of Auchmuty and Beals \cite{AB71}, the center of mass after perturbation is not necessarily zero, it turns out that a local energy minimizer $\rho$ on ${R}_0\left(\mathbb{R}^3\right)$ is also stable under perturbations which shift its center of mass:


\begin{corollary}[Center-free and Support-free Minimizer {\cite[Corollary 5.2]{McC06}}]\label{center/support free}
	If $\rho$ minimizes $E_J(\rho)$ locally on ${R}_0\left(\mathbb{R}^3\right)$, then it minimizes $E_J(\rho)$ locally on ${R}\left(\mathbb{R}^3\right)$.
\end{corollary}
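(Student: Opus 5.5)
The plan is to use translation invariance of $E_J$ together with the fact that translating a density by the vector $a$ moves it exactly $|a|$ in $W^\infty$. Let $\rho\in{R}_0(\mathbb{R}^3)$ be a local minimizer of $E_J$ on ${R}_0(\mathbb{R}^3)$. Then there is $\delta>0$ such that $E_J(\rho)\le E_J(\kappa)$ for every $\kappa\in{R}_0(\mathbb{R}^3)$ with $W^\infty(\rho,\kappa)<\delta$. We aim to show that $E_J(\rho)\le E_J(\sigma)$ for every $\sigma\in{R}(\mathbb{R}^3)$ with $W^\infty(\rho,\sigma)<\delta/2$.

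Fix such a $\sigma$ and set $a:=\bar{x}(\sigma)$. By Lemma \ref{properties of Wasser.} (iv), $|a|=|\bar x(\sigma)-\bar x(\rho)|\le W^\infty(\rho,\sigma)<\delta/2$, so in particular the center of mass of $\sigma$ is finite. Define the translate $\kappa(x):=\sigma(x+a)$, i.e.\ $\kappa=(\tau_{-a})_\#\sigma$ with $\tau_{-a}(x)=x-a$. Then $\kappa\ge0$, $\kappa$ has unit mass and the same $L^{4/3}$ norm as $\sigma$, and $\bar x(\kappa)=0$. Thus $\kappa$ lies in the admissible class; if ${R}_0$ also demands bounded support, that property transfers from $\sigma$ whenever $\sigma$ has it.

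We next show that $E_J(\kappa)=E_J(\sigma)$, which holds term by term:
\begin{itemize}
\item $U(\kappa)=U(\sigma)$, by the change of variables $x\mapsto x+a$;
\item $G(\kappa,\kappa)=G(\sigma,\sigma)$, since the kernel $|x-y|^{-1}$ is translation invariant;
\item $I(\kappa)=I(\sigma)$, since $I$ is measured about the center of mass, so $r^2(x-\bar x(\kappa))$ transforms consistently and $T_J$ is unchanged.
\end{itemize}

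It remains to check that $\kappa$ is $W^\infty$-close to $\rho$. By Lemma \ref{properties of Wasser.} (iii), $W^\infty(\sigma,\kappa)\le\|\tau_{-a}-id\|_{\infty,\sigma}=|a|<\delta/2$. The triangle inequality for $W^\infty$ (a metric, by Givens--Shortt) then gives $W^\infty(\rho,\kappa)\le W^\infty(\rho,\sigma)+W^\infty(\sigma,\kappa)<\delta$. Hence $E_J(\sigma)=E_J(\kappa)\ge E_J(\rho)$, and $\rho$ is a local minimizer on ${R}(\mathbb{R}^3)$.

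The main obstacle is the bookkeeping around the definition of ${R}_0$. First, we must make sure that $\bar x(\sigma)$ is finite for $W^\infty$-nearby $\sigma$; this follows from (iv), since $W^\infty(\rho,\sigma)<\infty$ forces $\sigma$ to be carried within bounded distance of $\operatorname{spt}\rho$. Second, we must confirm that $\kappa$ meets every requirement of ${R}_0$, in particular bounded support: $\operatorname{spt}\sigma$ lies in the $\delta$-neighbourhood of the compact set $\operatorname{spt}\rho$, since a transport map moves mass by less than $\delta$ $\nu$-a.e. Once these are settled, the rest is the routine invariance computation above.
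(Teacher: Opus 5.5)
Your proposal is correct and follows the paper's proof essentially step for step. You translate the nearby density by its center of mass, bound the shift with Lemma~\ref{properties of Wasser.}~(iv) and (iii), apply the triangle inequality for $W^\infty$, and conclude by translation invariance of $E_J$. The only cosmetic difference is that you check bounded support of $\sigma$ directly from the transport maps, whereas the paper cites Lemma~\ref{properties of Wasser.}~(ii), which encodes the same fact.
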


\begin{proof}
	{
		\rm There exists $\delta>0$ such that $E_J(\rho) \leq E_J(\kappa)$ whenever $\kappa \in {R}_0\left(\mathbb{R}^3\right)$ with $W^{\infty}(\rho, \kappa)<2 \delta$. Now, suppose $\kappa \in {R}\left(\mathbb{R}^3\right)$ with $W^{\infty}(\rho, \kappa)<\frac{\delta}{2}$. Thanks to Lemma \ref{properties of Wasser.} $(ii)$, we know $s p t ~ \kappa$ is also bounded. Lemma \ref{properties of Wasser.} $(iv)$ shows that $|x(\kappa)|<\frac{\delta}{2}$. Let $\widetilde{\kappa}(x)=\kappa(x+x(\kappa))$, then $\widetilde{\kappa}\in {R}_0\left(\mathbb{R}^3\right)$ and Lemma \ref{properties of Wasser.} $(iii)$ shows that $W^{\infty}(\widetilde{\kappa}, \kappa)<\frac{\delta}{2}$. By triangle inequality (recall $W^\infty$ is a metric), we have $W^{\infty}(\widetilde{\kappa}, \rho)\leq W^{\infty}(\widetilde{\kappa}, \kappa)+W^{\infty}(\kappa, \rho)<\delta$, i.e. $\widetilde{\kappa}$ lies within $\delta$-neighborhood of $\rho$ in ${R}_0\left(\mathbb{R}^3\right)$. By translation invariance, $E_J(\rho) \leq E_J(\widetilde{\kappa}) =E_J(\kappa)$.
	}
\end{proof}

We should point out: suppose $\rho$ minimizes $E_J(\rho)$ locally on ${R}_0\left(\mathbb{R}^3\right)$ and let $\sigma \in L^{\infty}\left(\mathbb{R}^3\right)$. Even if the perturbation satisfies $\rho+t \sigma \in {R}\left(\mathbb{R}^3\right)$ for $t \in[0,1]$, it may not be $W^{\infty}$-continuous as function of $t$, and we do not know if $E_J(\rho+t\sigma)\geq E_J(\rho)$ for $t \in[0,1]$; nevertheless, when $\sigma$ is supported on a set which diameter is small enough, with $\int_{\mathbb{R}^3} \sigma \,dx=0$, then $W^{\infty}(\rho, \rho+t\sigma)$ is small enough for all $t$ thanks to Lemma \ref{properties of Wasser.} $(i)$, hence $\rho$ minimizes $E_J(\rho+t \sigma)$ for all $t$. Therefore, $\sigma$ will then be a useful variation of $E_J(\rho)$.

To be precise, we define the set of admissible perturbations, which depend on $\rho$, as:
$$
P_{\infty}(\rho):=\bigcup_{R<\infty} P_R(\rho)
$$

Here, $P_R(\rho)$ is defined as:
$$P_R(\rho)=\left\{ \sigma \in L^{\infty}(\mathbb{R}^3)\mid \begin{array}{ll}
	\sigma(x)=0, & \text{where } x \text{ statisfies } \rho(x) >R \text{ or } |X|>R\\
	\sigma(x)\geq 0, & \text{where } x \text{ statisfies } \rho(x) <R^{-1}
\end{array} \right\}$$

Note that this definition does not a priori require the support of $\sigma$ to have small diameter; such a restriction will be imposed later when needed. The present formulation is convenient because certain papers that do not rely on the $W^\infty$ distance do not necessarily require such diameter condition.

One can see $P_{\infty}(\rho)$ is a convex cone.

\begin{remark}\label{test function with positive measure support}
	Given a set $E$ with positive measure, when $R$ is large enough, we can always find a non-zero function $\sigma \in P_R(\rho)$, with support in $E$. In fact, since $\rho$ is integrable implies $\rho$ is finite almost everywhere, we have 
	$$
	\begin{aligned}
		1&=\int_{\mathbb{R}^3} \rho\,dx\\
		&= \sum_{N=0}^{\infty} \int_{\{x \mid N \leq \rho(x)<N+1\}} \rho\,dx \\
		&\geq \sum_{N=0}^{\infty} \int_{\{x \mid N \leq \rho(x)<N+1\}} N\,dx\\
		&=\sum_{N=0}^{\infty} N \mu(\{x \mid N \leq \rho(x)<N+1\}) \\
		&\geq \sum_{N=1}^{\infty} \mu(\{x \mid N \leq \rho(x)<N+1\})
	\end{aligned}	
	$$ 
	Notice when $R \rightarrow \infty$, 
	$$0 \leq \mu\left(\{x \in E \mid \rho(x)>R\} \leq \mu(\{x \in \mathbb{R}^3 \mid \rho(x)>R\right\}\leq\sum_{N=[R]}^{\infty} \mu(\{x \mid N \leq \rho(x)<N+1\}) \rightarrow 0$$
	Hence 
	$$\mu(\{x \in E \mid \rho(x) \leq R\})=\mu(E)-\mu(\{x \in E \mid \rho(x)>R\} \rightarrow \mu(E)$$
	By similar arguments in the following paragraph about how to get $\widetilde{E}$ from $\widetilde{E}_k$, we can show when $R$ is large enough, $\mu(\{x \in E \mid \rho(x) \leq R\})> \frac{1}{2} \mu(E)>0$.

	Pick one of such $R$, and let $\widetilde{E}_k=\{x \in E\mid \rho(x) \leq R, | x| \leq k\}$, then $\left\{\widetilde{E}_k\right\}_{k=1}^{\infty}$ is an ascending collection of measurable sets, and $\cup_{k=1}^{\infty} \widetilde{E}_k=\{x \in E \mid \rho(x) \leq R\}$, therefore, $\mu(\{x \in E \mid \rho(x) \leq R\})=\lim\limits_{k \rightarrow \infty} \mu\left(\widetilde{E}_k\right)$. (see \cite[Section 2.5]{RF10}). When $k$ is large enough, 
	$$\mu(\{x \in E\mid \rho(x) \leq R, | x|\leq k\})>\frac{1}{2} \mu(E)>0$$
	Then replace $R$ by another larger $R$ if needed, we have $\mu(\{x \in E\mid \rho(x) \leq R, | x |\leq R\})>0$. We set $\widetilde{E}=\{x \in E\mid \rho(x) \leq R, | x |\leq R\}$, then $\sigma(x)=\left\{\begin{array}{ll}1, &x \in \widetilde{E} \\ 0, &\text {otherwise }\end{array}\right.$ has the properties mentioned at the beginning of this remark. In particular, one can see $0 \in P_{\infty}(\rho)$, but $P_{\infty}(\rho) \neq\{0\}$. Notice in $\widetilde{E}$, $\rho(x)$ is bounded and $0\leq \rho(x) \leq R$.
\end{remark}

\begin{remark}\label{test function with minimizer bounded}
	Similar as above, given a set $E$ with positive measure, and $\rho$ has positive mass in $E$ which implies $\mu\{x\in E \mid \rho(x)>0\}>0$, then when $R$ is large enough, let $E_R=\{x \in E\mid | x | \leq R, R^{-1}<\rho(x)<R\}$, we have $\mu\left(E_R\right)>0$, and $\mathbf{1}_{E_R} \in P_R(\rho) \subset P_0(\rho)$.
\end{remark}

\begin{remark}\label{nonnegative perturbation}
	Given $\rho \in R(\mathbb{R}^3)$, and $\sigma \in P_\infty(\rho)$, then by definition of $P_\infty(\rho)$, we know $\exists R>0$, such that $\sigma \in P_R(\rho)$, one can check when t is small enough, $\rho+t\sigma\geq0$, thus we can also understand from this perspective, that $\rho+t\sigma\geq0$ is a reasonable perturbation.
\end{remark}
\section{Analysis of Lagrange Multiplier and Properties of Local Minimizers}\label{section5-constant chemical potential}

This section is divided into three subsections. In the first subsection, we compute the variational derivative of $E_J$, and prove the finite energy property of local minimizers, which ensures the variational derivative makes sense. In the second subsection, we revisit and modify the proofs of ``weak'' versions of Theorem \ref{Properties of LEM} ($iv$-$v$). Here ``weak'' means (\ref{EL}) holds true for almost all points, rather than pointwise, in the region we consider. In the third subsection, we discuss the nonexistence of finite energy local minimizers under the topology inherited from a topological vector space, thereby suggesting the particularity of the topology induced by the $W^\infty$ metric.


\subsection{Variational derivative of $E_J$ and finite energy of local minimizer}\label{subsection5.1-variational derivative}
Note that an element $\rho \in R(\mathbb{R}^3)$ might have infinite total energy $E_J(\rho)$. A simple example with infinite total energy could be $\rho(x)\coloneq\rho_{N}(x) = C\frac{1}{|x|^{\alpha}}\mathbf{1}_{\{{{|x|} < N}\}}$ with suitable $C$, $\alpha$, and $N$. In order to consider the variational derivative $E_J^{\prime}(\rho)$ of the energy $E_J(\rho)$, we first determine the space $W \subsetneq R(\mathbb{R}^3)$ where we want to take derivatives:
$$X = L^{\frac{4}{3}}\left( \mathbb{R}^{3} \right)\cap L^{1}\left( \mathbb{R}^{3} \right)$$
$$U=\{\rho \in X \mid \rho \geq 0 \text{ , } U(\rho)< \left.\infty\right\}$$ 
$$W=\left\{\rho \in U \mid \int_{\mathbb{R}^3} \rho\,dx=1\right\}$$

\begin{remark}\label{equivalence of finite total energy and finite internal energy}
	Suppose $\rho \in X$ is non-negative and positive mass, then by Remark \ref{positive MoI} we know $I(\rho)>0$, thus $T_J(\rho)$ is finite. Thanks to Proposition \ref{finite gravitational interaction energy}, we have $G(\rho, \rho)<\infty$. Therefore, $$\left. \rho \in U\Longleftrightarrow U(\rho) < \infty\Longleftrightarrow E_{J}(\rho) < \infty \right.$$
\end{remark}

It is helpful if we have a criterion for determining when to obtain $U(\rho)<\infty$.

\begin{lemma}[Finite Internal Energy]\label{finite internal energy}
	If a nonnegative function $\sigma$ is in $L^1(\mathbb{R}^3) \cap L^\infty (\mathbb{R}^3)$, then $U(\sigma)<\infty$.
\end{lemma}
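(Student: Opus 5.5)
The idea is to bound $A(s)$ by a constant multiple of $s$ on the bounded range of $\sigma$, so that $U(\sigma)=\int A(\sigma)\,dx$ is dominated by a multiple of $\|\sigma\|_{L^1}$. Let $M:=\|\sigma\|_{L^\infty(\mathbb{R}^3)}<\infty$; modifying $\sigma$ on a null set does not change $U(\sigma)$, so we may assume $0\le\sigma(x)\le M$ for every $x$.

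The key pointwise estimate comes from the second expression in (\ref{A}): for $0\le s\le M$,
$$A(s)=s\int_0^s P(\tau)\tau^{-2}\,d\tau\le s\int_0^M P(\tau)\tau^{-2}\,d\tau=:C_M\, s,$$
since the integrand is nonnegative by \ref{F1}. One must check that $C_M<\infty$; this is the only delicate point, and it is exactly the well-definedness of $A$ already noted after (\ref{A}).
\begin{itemize}
\item Near $0$, \ref{F2} gives $P(\tau)\le\tau^{4/3}$ for $\tau\le\tau_0$, so $P(\tau)\tau^{-2}\le\tau^{-2/3}$, which is integrable on $(0,\tau_0]$.
\item On $[\tau_0,M]$, the integrand is continuous by \ref{F1}, hence bounded.
\end{itemize}
Alternatively, one can use that $A$ is increasing with $A(0)=0$ and convex (Lemma \ref{inc. of A'}). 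This gives $A(s)\le \frac{A(M)}{M}s$ on $[0,M]$ by convexity, with $A(M)<\infty$; if $M=0$ the claim is trivial.

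Integrating the pointwise bound gives
$$U(\sigma)=\int_{\mathbb{R}^3}A(\sigma(x))\,dx\le C_M\int_{\mathbb{R}^3}\sigma\,dx=C_M\|\sigma\|_{L^1}<\infty.$$
Measurability of $A\circ\sigma$ follows from the continuity of $A$.

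I expect no real obstacle here; the only point needing care is the finiteness of $\int_0^M P(\tau)\tau^{-2}\,d\tau$ near $\tau=0$, which is handled by \ref{F2}.
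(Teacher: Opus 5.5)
Your proof is correct, but it takes a different and simpler route than the paper. The paper splits $\mathbb{R}^3$ into $\{\sigma<\delta\}$ and $\{\delta\le\sigma\le C\}$. On the first set it uses that $A$ inherits \ref{F2}, so $A(s)<s^{4/3}$ for small $s$, and bounds that piece by $\int\sigma^{4/3}\,dx$, which is finite by interpolating between $L^1$ and $L^\infty$. On the second set it uses the crude bound $A(\sigma)\le A(C)\le \frac{A(C)}{\delta}\sigma$. You instead read off from (\ref{A}) that $s\mapsto A(s)/s=\int_0^s P(\tau)\tau^{-2}\,d\tau$ is nondecreasing, so a single linear bound $A(s)\le \frac{A(M)}{M}s$ holds on all of $[0,M]$. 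This gives $U(\sigma)\le \frac{A(M)}{M}\|\sigma\|_{L^1}$ with no splitting and no $L^{4/3}$ interpolation. In your argument \ref{F2} enters only to guarantee $A(M)<\infty$. Your bound is also explicit in $\|\sigma\|_{L^\infty}$ and $\|\sigma\|_{L^1}$. By contrast, the paper's final step replaces $\|\sigma\|_{L^1}$ by $1$, which is not among the lemma's hypotheses, though this is harmless for finiteness. The one thing the paper's decomposition buys is that it isolates the low-density region, where \ref{F2} makes $A$ smaller than linear. With a Chebyshev bound on $\mu(\{\sigma\ge\delta\})$, that structure would adapt to $\sigma\in L^{4/3}\cap L^\infty$ without assuming $L^1$, whereas your linear bound genuinely uses $\sigma\in L^1$. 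Under the stated hypotheses this extra flexibility is not needed.
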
 

\begin{proof}
	{
		\rm
		Thanks to Remark \ref{assumptions from P to A}, we know  $A(s)$ also satisfies \ref{F2} and \ref{F3}. In particular, $\exists \delta>0$, such that if $s \in [0,\delta)$, then $A(s)<s^{\frac{4}{3}}$. We know $$U(\sigma)=\int_{\mathbb{R}^3} A(\sigma(x)) \,dx=\int_{\{\sigma<\delta\}} A(\sigma(x)) \,dx+\int_{\{\delta \leq \sigma \leq C\}} A(\sigma(x)) \,dx=:U_1+U_2$$ where $C=\|\sigma\|_{L^\infty(\mathbb{R}^3)}$. By the choice of $\delta$ we know 
		$$U_1\leq \int_{\{\sigma<\delta\}} \sigma^{\frac{4}{3}}(x) \,dx\leq\int_{\mathbb{R}^3} \sigma^{\frac{4}{3}}(x) \,dx$$
		Since $\sigma \in L^{1}\left(\mathbb{R}^3\right) \cap L^{\infty}\left(\mathbb{R}^3\right)$, we know by Interpolation Inequality \cite[Section 4.2]{Bre11} that $U_1\leq\int_{\mathbb{R}^3} \sigma^{\frac{4}{3}}(x)\,dx<\infty$. Since $A(s)$ is increasing, we know 
		$$
		\begin{aligned}
			U_2&\leq \int_{\{\delta \leq \sigma \leq C\}} A(C)\,dx\\
			&\leq A(C) \int_{\{\delta \leq \sigma \leq C\}} \frac{\sigma(x)}{\delta}\,dx\\
			&\leq \frac{A(C)}{\delta}\int_{\{\delta \leq \sigma \leq C\}} {\sigma(x)}\,dx\\
			&\leq\frac{A(C)}{\delta}\|\sigma\|_{L^{1}(\mathbb{R}^3)}\\
			&\leq\frac{A(C)}{\delta}<\infty
		\end{aligned}
		$$
		Therefore, we know $U(\sigma)<\infty$.
	}
\end{proof}

To discuss the differentiability of $E_J$, we modify the arguments of Auchmuty and Beals in \cite[Section 4]{AB71} and have the following lemma:

\begin{lemma}[Differentiability of Energy $E_J(\rho)$]\label{diff. of energy}
	Given $\rho \in W$, we have $P_{\infty}(\rho) \subset P(\rho)$, and $E_J(\rho)$ is $P_{\infty}(\rho)$-differentiable at $\rho$. Here $P(\rho)$ and $P_{\infty}(\rho)$-differentiability (i.e. differentiability at $\rho$ in the direction of $P_\infty (\rho)$) are provided in Appendix \ref{sectionA-calculus of variations in vector spaces}. Moreover, the derivative at $\rho$ is $E_J^{\prime}(\rho)$ in the sense that $\forall \sigma\in P_{\infty}(\rho)$, $E_J^{\prime}(\rho)(\sigma)=\int_{\mathbb{R}^3} E_J^{\prime}(\rho) \sigma\,dx$\footnote{To remain consistent with the notation in \cite[Section 4]{AB71}, we use $E_J^{\prime}$ as the symbol for both linear functional and function,  provided it does not cause confusion.}, where on the right hand side the function $E_J^{\prime}(\rho)$ is given by
	\begin{equation}\label{variational derivative}
		E_J^{\prime}(\rho)(x):=A^{\prime}(\rho(x))-V_\rho(x)-\frac{J^2}{2 I^2(\rho)} r^2(x-\bar{x}(\rho))
	\end{equation}
\end{lemma}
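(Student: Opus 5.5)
We will treat the three pieces of $E_J=U-\frac12 G+T_J$ separately. For each piece we compute the one-sided directional derivative $\lim_{t\to0^+}\frac{E(\rho+t\sigma)-E(\rho)}{t}$ for $\sigma\in P_\infty(\rho)$, and then add the results. Since the precise definitions of $P(\rho)$ and of $P_\infty(\rho)$-differentiability are in the appendix, we only use what they presumably encode. First, $\rho+t\sigma$ must be an admissible direction, i.e.\ $\rho+t\sigma\in U$ for small $t\ge0$. Second, the difference quotient must converge to a functional linear in $\sigma$.

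\textbf{Step 1: $P_\infty(\rho)\subset P(\rho)$.} Fix $\sigma\in P_R(\rho)$. By Remark \ref{nonnegative perturbation}, $\rho+t\sigma\ge 0$ for $0\le t<(R\|\sigma\|_\infty)^{-1}$. The function $\sigma$ is bounded and supported in the ball $\{|x|\le R\}$, so $\sigma\in L^1\cap L^{4/3}$ and hence $\rho+t\sigma\in X$.

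To see that $U(\rho+t\sigma)<\infty$, split $\mathbb{R}^3$ into three regions.
\begin{itemize}
\item[(a)] Off the support of $\sigma$, the integrand is $A(\rho)$, which is integrable because $U(\rho)<\infty$.
\item[(b)] On $\operatorname{spt}\sigma\cap\{\rho\le R\}$, the function $\rho+t\sigma$ is bounded by $R+\|\sigma\|_\infty$ on a bounded set, so $A(\rho+t\sigma)$ is bounded there.
\item[(c)] The set $\{\rho>R\}$ does not meet the support of $\sigma$, by the definition of $P_R(\rho)$.
\end{itemize}
Hence $U(\rho+t\sigma)<\infty$.

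\textbf{Step 2: the internal energy.} We write
\[
\frac{U(\rho+t\sigma)-U(\rho)}{t}=\int_{\{\sigma\neq0\}}\frac{A(\rho+t\sigma)-A(\rho)}{t}\,dx .
\]
On $\{\sigma\ne0\}$ we have $\rho\le R$ and $|x|\le R$. By the mean value theorem and the monotonicity of $A'$ (Lemma \ref{inc. of A'}), the integrand is bounded by $A'(R+\|\sigma\|_\infty)\,|\sigma|$. This bound is integrable on the bounded set $\{|x|\le R\}$. By continuity of $A'$, the integrand converges pointwise to $A'(\rho)\sigma$. Dominated convergence then gives $U'(\rho)(\sigma)=\int A'(\rho)\sigma\,dx$.

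One subtlety must be handled: $\sigma$ may be negative only where $\rho\ge R^{-1}>0$. This guarantees that $\rho+t\sigma\ge0$ uniformly for small $t$, so the mean value theorem is applied inside the domain $[0,\infty)$ of $A$.

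\textbf{Step 3: the gravitational and rotational energy.} The map $G$ is a symmetric bilinear form, so
\[
G(\rho+t\sigma,\rho+t\sigma)=G(\rho,\rho)+2tG(\rho,\sigma)+t^2G(\sigma,\sigma).
\]
All three terms are finite by Proposition \ref{finite gravitational interaction energy} and Remark \ref{finite gravitational interaction energy with different objects}, since $\sigma\in L^1\cap L^\infty$. Therefore the derivative of $-\frac12G(\rho,\rho)$ in the direction $\sigma$ is $-G(\rho,\sigma)=-\int V_\rho\sigma\,dx$.

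For $T_J=J^2/(2I)$, the chain rule gives $T_J'(\rho)(\sigma)=-\frac{J^2}{2I(\rho)^2}\,I'(\rho)(\sigma)$, and it remains to compute $I'(\rho)(\sigma)$. We expect this to be the main obstacle, because $I$ depends on $\rho$ both through the mass and through the center of mass $\bar x(\rho)$.

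Our plan for this step is as follows. We write $I(\kappa)=\int \kappa\, r^2(x)\,dx-M(\kappa)\,r^2(\bar x(\kappa))$, where $M(\kappa)$ is the mass of $\kappa$; this identity is the case of Lemma \ref{expansion of MoI} specialised to a fixed origin. We then differentiate term by term, using the formulas
\[
\frac{d}{dt}M=\int\sigma\,dx,\qquad \frac{d}{dt}\bigl(M\bar x\bigr)=\int x\sigma\,dx .
\]
The cross terms combine into $\int \sigma\, r^2(x-\bar x(\rho))\,dx$, up to a term proportional to $\int\sigma\,dx$. Such a constant term is harmless, since it can be absorbed into the Lagrange multiplier, or it vanishes for mass-preserving variations.

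Carrying this out requires $\int x\rho\,dx$ and $\int r^2\rho\,dx$ to be finite. We plan to justify this either by assuming $I(\rho)<\infty$ or by working in $R_0$ with bounded support. The perturbation $\sigma$ itself is compactly supported and bounded, so it causes no integrability problem. Combining Steps 2 and 3 yields formula \eqref{variational derivative}, with the functional represented by integration against the function $E_J'(\rho)$. This functional is linear in $\sigma$, which is what $P_\infty(\rho)$-differentiability requires.
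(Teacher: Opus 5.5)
Your treatment of $U$ and $G$ is the paper's argument. It uses the mean value theorem with the bound $A'(R+\|\sigma\|_{L^\infty})\|\sigma\|_{L^\infty}$ on $\{\rho\le R,\ |x|\le R\}$, dominated convergence, and the bilinear expansion of $G$.

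For $T_J$ your route differs, and it is the cleaner one. The paper applies Lemma \ref{expansion of MoI} to $\rho$ and $t\sigma$, using $I(t\sigma)=tI(\sigma)$ and $\bar x(t\sigma)=\bar x(\sigma)$. It then checks that $I(\sigma)+m(\sigma)\,r^2(\bar x(\rho)-\bar x(\sigma))=\int r^2(x-\bar x(\rho))\sigma\,dx$. This applies a lemma stated for nonnegative densities to a signed one. It also needs $\bar x(\sigma)$, which is undefined when $\int\sigma\,dx=0$. Those zero-mass directions are exactly the $w$ fed into Theorem \ref{Multiplier them}. Your fixed-origin identity $I(\kappa)=\int\kappa\,r^2(x)\,dx-r^2(p(\kappa))/M(\kappa)$, with $p(\kappa)=\int x\kappa\,dx$, makes $t\mapsto I(\rho+t\sigma)$ an explicit rational function of $t$. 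It never needs $\bar x(\sigma)$ or $I(\sigma)$.

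\textbf{The gap.} You stop before computing and hedge that the result holds ``up to a term proportional to $\int\sigma\,dx$'', to be absorbed into a multiplier or dropped for mass-preserving variations. That does not prove this lemma. The lemma asserts the exact formula \eqref{variational derivative} for every $\sigma\in P_\infty(\rho)$. $P_\infty(\rho)$ contains directions of nonzero mass, e.g.\ $\mathbf{1}_{\widetilde E}$ of Remark \ref{test function with positive measure support}, or $u_0=\mathbf{1}_C$ in Proposition \ref{local constant multiplier}. With an extra $c\int\sigma\,dx$ the stated formula would be false.

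The fix is to finish the computation. With $M(\rho)=1$,
\[
\frac{d}{dt}\Big|_{t=0}\frac{r^2(p(\rho+t\sigma))}{M(\rho+t\sigma)}=2\Big\langle\bar x(\rho),\int x\sigma\,dx\Big\rangle_2-r^2(\bar x(\rho))\int\sigma\,dx,
\]
hence
\[
I'(\rho)(\sigma)=\int\sigma\,\bigl(r^2(x)-2\langle\bar x(\rho),x\rangle_2+r^2(\bar x(\rho))\bigr)\,dx=\int r^2(x-\bar x(\rho))\,\sigma\,dx
\]
exactly. There is no leftover constant. This is as expected, since $\bar x(\rho)$ minimizes $c\mapsto\int\rho\,r^2(x-c)\,dx$, so moving the centre contributes nothing to first order.

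\textbf{Moments.} The paper also tacitly assumes $\bar x(\rho)$ is finite, since the formula involves it. You need not assume $I(\rho)<\infty$, however. If $I(\rho)=\infty$, then $I(\rho+t\sigma)=\infty$ for small $t$, because $\sigma$ is bounded with bounded support. So $T_J\equiv 0$ along the segment, and the formula holds with $J^2/(2I^2(\rho))$ read as $0$.
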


\begin{proof}
	{
		\rm
		Given $\sigma \in P_{\infty}(\rho)$, then $\exists R>0$ such that $\sigma \in P_R(\rho)$. By construction we know $\sigma \in L^{\infty}\left(\mathbb{R}^3\right)$ with compact support, thus by Hölder's inequality we know $\sigma \in L^p\left(\mathbb{R}^3\right)$ for $p>0$. Since $\rho \in W$, $\int_{\mathbb{R}^3} \rho\,dx =1$, then when $t$ is small enough, we have $\int_{\mathbb{R}^3} (\rho+t \sigma)\,dx>0$. Thanks to Remark \ref{nonnegative perturbation} and Remark \ref{equivalence of finite total energy and finite internal energy}, we know $\rho+t\sigma$ is nonnegative, $T_J(\rho+t\sigma)<\infty$ and $G(\rho+t\sigma, \rho+t\sigma)<\infty$. Moreover, to show $\rho+t\sigma \in U$, it suffices to show $U(\rho+t\sigma)<\infty$. 
		
		For almost every $x \in \mathbb{R}^3, \rho(x)$ is finite, and $\sigma(x) \leq\|\sigma\|_{L^\infty}$. For such $x$ fixed and $0<t<1$, by mean value theorem we know $\exists \theta \in(0, t)$, such that
		\begin{equation}\label{difference of A by mean value theorem}
			A(\rho(x)+t \sigma(x))-A(\rho(x))=A^{\prime}(\rho(x)+\theta \sigma(x)) t \sigma(x)
		\end{equation}
		
		Set $E_R=\left\{x \in \mathbb{R}^3 \mid \rho(x) \leq R\right.$ and $\left.|x| \leq R\right\}$. If $x \notin E_R$, by construction $\sigma(x)=0$, thus
		$$
		A(\rho(x)+t \sigma(x))-A(\rho(x)=0
		$$
		
		If $x \in E_R$, since $A^{\prime}$ is increasing and non-negative, we have
		$$
		|A(\rho(x)+t \sigma(x))-A(\rho(x))| \leq A^{\prime}\left(R+\|\sigma\|_{L^{\infty}}\right)\|\sigma\|_{L^{\infty}}\cdot t
		$$
		
		Therefore, we have 
		$$
		|A\left( {\rho + t\sigma} \right)| \leq |A(\rho)| + A^{\prime}\left( {R + \left\| \sigma \right\|_{L^{\infty}}} \right)\left\| \sigma \right\|_{L^{\infty}} \cdot \mathbf{1}_{E_R}\cdot t
		$$
		The right hand side is an integrable function. Then we know $U(\rho+t\sigma)=\int_{\mathbb{R}^3} A(\rho+t\sigma)\,dx<\infty$. Then we know $\rho+t \sigma \in U$ for $t$ small enough. Therefore, $\sigma \in P(\rho)$ and then $P_{\infty}(\rho) \subset P(\rho)$.  
		
		The observations above are also helpful to calculate the derivative. We notice $|\frac{A(\rho(x)+t \sigma(x))-A(\rho(x))}{t}|$ is also bounded by an integrable function $A^{\prime}(R+\|\sigma\|_{L^{\infty}})\|\sigma\|_{L^{\infty}} \cdot \mathbf{1}_{E_R}$. Moreover, by (\ref{difference of A by mean value theorem}) we know $\frac{A(\rho(x)+t \sigma(x))-A(\rho(x))}{t}$ converges a.e. to $A^{\prime}(\rho(x)) \sigma(x)$ as $t \rightarrow 0$. By the dominated convergence theorem, we have
		\begin{equation}\label{derivative of internal energy}
			\lim _{t \rightarrow \infty} \frac{\int_{\mathbb{R}^3} A(\rho+t \sigma)\,dx-\int_{\mathbb{R}^3} A(\rho)\,dx}{t}=\int_{\mathbb{R}^3} A^{\prime}(\rho) \cdot \sigma\,dx
		\end{equation}

		Now consider gravitational interaction energy. Due to Remark \ref{finite gravitational interaction energy with different objects}, we know $G(\rho, \rho)=\int_{\mathbb{R}^3} \rho V_\rho\,dx <\infty$ and $G(\rho+t\sigma, \rho+t\sigma) <\infty$, then we compute
		$$
		\frac{G(\rho+t \sigma, \rho+t \sigma)-G(\rho, \rho)}{t}=G(\rho, \sigma)+G(\sigma, \rho)-t G(\sigma, \sigma) \rightarrow G(\rho, \sigma)+G(\sigma, \rho)
		$$
		
		Since $G(,\cdot ,)$ is a symmetric bilinear form, thus 
		$$G(\rho, \sigma)+G(\sigma, \rho)=2 G(\sigma, \rho)=2 \int_{\mathbb{R}^3} \sigma V_\rho\,dx$$
		
		Consider $T_J(\rho)=\frac{J^2}{2 I(\rho)}$, when $t$ is small enough, we have $\int_{\mathbb{R}^3} (\rho+t \sigma)\,dx>0$, thus $I(\rho+t \sigma)>0$. and $T_J(\rho+t \sigma)$ is well-defined. Easy to check the mass of $t \sigma$ is $m(t \sigma)=t m(\sigma)$, the center of mass is $\bar{x}(t \sigma)=\bar{x}(\sigma)$, and the moment of inertia is $I(t \sigma)=t I(\sigma)$. Thanks to Lemma \ref{expansion of MoI}, we can expand $I(\rho+t \sigma)$ and get
		$$
		\begin{aligned}
			\frac{T_J(\rho+t \sigma)-T_J(\rho)}{t} & =\frac{J^2(I(\rho)-I(\rho+t \sigma))}{2 I(\rho) I(\rho+t \sigma) \cdot t} \\
			& =\frac{J^2\left(-I(t \sigma)-\left(\frac{m(\rho) m(t \sigma)}{m(\rho)+m(t \sigma)}\right) r^2(\bar{x}(\rho)-\bar{x}(t \sigma))\right)}{2 I(\rho) I(\rho+t \sigma) \cdot t} \\
			& =\frac{J^2\left(-I(\sigma)-\left(\frac{m(\rho) m(\sigma)}{m(\rho)+m(t \sigma)}\right) r^2(\bar{x}(\rho)-\bar{x}(\sigma))\right)}{2 I(\rho) I(\rho+t \sigma)} \\
			& \overset{t \rightarrow 0}{\rightarrow} \frac{J^2\left(-I(\sigma)-m(\sigma) \cdot r^2(\bar{x}(\rho)-\bar{x}(\sigma))\right)}{2 I^2(\rho)}
		\end{aligned}
		$$
		
		Consider
		$$
		\begin{aligned}
			I(\sigma)+m(\sigma) \cdot r^2(\bar{x}(\rho)-\bar{x}(\sigma)) =&\int_{\mathbb{R}^3} \sigma \cdot\left(r^2(x-\bar{x}(\sigma))+r^2(\bar{x}(\rho)-\bar{x}(\sigma))\right)\,dx \\
			=&\int_{\mathbb{R}^3} \sigma \cdot(\langle x, x \rangle_2-2\langle\bar{x}(\sigma), x\rangle_2+\langle\bar{x}(\sigma), \bar{x}(\sigma)\rangle_2\\
			&+\langle\bar{x}(\rho), \bar{x}(\rho)\rangle_2-2\langle\bar{x}(\sigma), \bar{x}(\rho)\rangle_2+\langle\bar{x}(\sigma), \bar{x}(\sigma)\rangle_2\\
			&-2\langle\bar{x}(\rho), x\rangle_2+2\langle\bar{x}(\rho), x\rangle_2)\,dx \\
			=&\int_{\mathbb{R}^3} \sigma \cdot(r^2(x-\bar{x}(\rho))+2\langle\bar{x}(\sigma), \bar{x}(\sigma)\rangle_2\\
			&-2\langle\bar{x}(\sigma), x\rangle_2-2\langle\bar{x}(\sigma), \bar{x}(\rho)\rangle_2+2 \langle\bar{x}(\rho), x\rangle_2)\,dx\\
			=&\int_{\mathbb{R}^3} \sigma \cdot r^2(x-\bar{x}(\rho))\,dx+2 m(\sigma)\langle\bar{x}(\sigma), \bar{x}(\sigma)\rangle_2-2 m(\sigma)\langle\bar{x}(\sigma), \bar{x}(\sigma)\rangle_2\\
			&-2 m(\sigma) \langle\bar{x}(\sigma), \bar{x}(\rho)\rangle_2+2 m(\sigma)\langle\bar{x}(\rho), \bar{x}(\sigma)\rangle_2\\
			=&\int_{\mathbb{R}^3} r^2(x-\bar{x}(\rho)) \cdot \sigma\,dx
		\end{aligned}
		$$
		Therefore, $\frac{T_J(\rho+t \sigma)-T_J(\rho)}{t} \rightarrow \int_{\mathbb{R}^3} \frac{-J^2}{2 I^2(\rho)} \cdot r^2(x-\bar{x}(\rho)) \cdot \sigma\,dx$ as $t \rightarrow 0$. Collecting the results above, we finish the proof of Lemma \ref{diff. of energy}.
	}
\end{proof}

\begin{remark}
	One can generate the results to any $\rho \in U$ with positive mass.
\end{remark}

\begin{remark}
	The admissible perturbations are a bit different from $P_0(\rho)$ mentioned in \cite[Section 4]{AB71}. There the energy related to momentum is not $T_J(\rho)$ but $\frac{1}{2} \int_{\mathbb{R}^3} \rho(x) L\left(m_\rho(r(x))\right) r^{-2}(x) \,dx$, in which the term $r^{-2}(x)$ will be singular when $r(x) \rightarrow 0$. Therefore, the authors require $\sigma(x)=0$ when $r(x)$ is small.
\end{remark}



We are now ready to prove the differentiability of $E_J $ at local minimizers:

\begin{lemma}[Differentiability at Minimizers]\label{diff. at mini.}
	If $\rho$ minimizes $E_J(\rho)$ locally on ${R}\left(\mathbb{R}^3\right)$, then $E_J(\rho)$ is finite, $\rho \in W$. In particular, by Lemma \ref{diff. of energy}, we know $E_J(\rho)$ is $P_{\infty}(\rho)$-differentiable at $\rho$, and the derivative at $\rho$ is $E_J^{\prime}(\rho)$ given in (\ref{variational derivative}).
\end{lemma}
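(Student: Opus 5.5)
The plan is to compare $\rho$ with a nearby competitor of finite energy, using Lemma \ref{properties of Wasser.} (vi). Let $\rho$ be a local minimizer of $E_J$ on ${R}(\mathbb{R}^3)$. Then there is $\delta>0$ such that $E_J(\rho)\le E_J(\kappa)$ for every $\kappa\in{R}(\mathbb{R}^3)$ with $W^\infty(\rho,\kappa)<\delta$. Applying Lemma \ref{properties of Wasser.} (vi) with $t=1$ and $\epsilon=\delta$, I obtain $\sigma\in{R}(\mathbb{R}^3)\cap L^\infty(\mathbb{R}^3)$ with $W^\infty(\rho,\sigma)<\delta$. Hence $E_J(\rho)\le E_J(\sigma)$, and it remains to check that $E_J(\sigma)<\infty$.

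To show $E_J(\sigma)<\infty$, I will treat the three terms separately.
\begin{enumerate}[(1)]
\item Internal energy: $\sigma\ge0$ and $\sigma\in L^1\cap L^\infty$, so Lemma \ref{finite internal energy} gives $U(\sigma)<\infty$.
\item Gravitational energy: $\sigma\in L^{4/3}\cap L^1$, so Proposition \ref{finite gravitational interaction energy} gives $G(\sigma,\sigma)<\infty$, and this term enters with a minus sign anyway.
\item Kinetic term: $\sigma$ has unit mass, so $I(\sigma)>0$ by Remark \ref{positive MoI}, and therefore $T_J(\sigma)=J^2/(2I(\sigma))<\infty$.
\end{enumerate}
Together these give $E_J(\rho)\le E_J(\sigma)<\infty$.

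Next I need to rule out $E_J(\rho)=-\infty$ and conclude $\rho\in W$. Since $\rho\in{R}(\mathbb{R}^3)\subset L^{4/3}\cap L^1$, Proposition \ref{finite gravitational interaction energy} gives $G(\rho,\rho)<\infty$. Also $U(\rho)\ge0$ because $A\ge0$, and $T_J(\rho)\in(0,\infty)$ by Remark \ref{positive MoI}. So $E_J(\rho)$ is well defined, not of the form $\infty-\infty$, and bounded below by $-G(\rho,\rho)/2>-\infty$. Combined with the upper bound, $E_J(\rho)$ is finite, and Remark \ref{equivalence of finite total energy and finite internal energy} then yields $U(\rho)<\infty$. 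As $\rho\ge0$, $\rho\in X$ and $\int\rho=1$, this means $\rho\in W$.

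Finally, Lemma \ref{diff. of energy} applies to $\rho\in W$ and gives the $P_\infty(\rho)$-differentiability together with the formula (\ref{variational derivative}).

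The main obstacle is producing a finite-energy competitor inside every $W^\infty$-ball: a generic nearby density could itself have $U=\infty$. This is exactly what Lemma \ref{properties of Wasser.} (vi) supplies, so the step reduces to that lemma. One point to verify is that the $\sigma$ it produces lies in ${R}(\mathbb{R}^3)$, i.e. that it is in $L^{4/3}$. This follows from $\sigma\in L^1\cap L^\infty$ by interpolation.
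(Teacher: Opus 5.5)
Your proposal is correct and follows the paper's proof essentially step for step. You use Lemma~\ref{properties of Wasser.}~($vi$) to place a bounded competitor $\sigma$ inside the $W^\infty$-ball where $\rho$ is minimal, bound $E_J(\sigma)$ term by term via Lemma~\ref{finite internal energy}, Proposition~\ref{finite gravitational interaction energy} and Remark~\ref{positive MoI}, and conclude $U(\rho)<\infty$, hence $\rho\in W$; your explicit check that $E_J(\rho)>-\infty$ (via $A\ge 0$ and $G(\rho,\rho)<\infty$) just spells out what the paper records in its first line.
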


\begin{proof}
	{
		\rm
		Since $\int_{\mathbb{R}^3} \rho\,dx=1>0$, similar to Remark \ref{equivalence of finite total energy and finite internal energy} we know $T_J(\rho)$  and $G(\rho, \rho)$ are finite. Since $\rho$ is a local minimizer, $\exists \delta>0$, such that $\forall \sigma \in {R}\left(\mathbb{R}^3\right)$ with $W^{\infty}(\rho, \sigma)<\delta$, one has $E_J(\rho) \leq E_J(\sigma)$. Thanks to Lemma \ref{properties of Wasser.} ($vi$), take $\delta$ small enough, we can find a $\sigma \in {R}\left(\mathbb{R}^3\right) \cap L^{\infty}\left(\mathbb{R}^3\right)$, with $W^{\infty}(\rho, \sigma)< \delta$ and $\int_{\mathbb{R}^3} \sigma\,dx>0$. Similar as above we have $G(\sigma, \sigma)<\infty, T_J(\sigma)<\infty$. Thanks to Lemma \ref{finite internal energy}, we know $U(\sigma)<\infty$, and then $E_J(\rho) \leq E_J(\sigma)<\infty$, which implies $U(\rho)<\infty$. Then we have $\rho \in W$.
	}
\end{proof}

\subsection{Locally constant Lagrange multiplier}\label{subsection5.2-locally constant chemical potential}

In this subsection, we essentially follow McCann's arguments \cite[Section 5]{McC06} but with some refinements to prove ``weak'' versions of Theorem \ref{Properties of LEM} Parts ($iv$-$v$). The ``weak'' version serves as the bridge linking the norm of $\rho$ and the norm of the potential $V_\rho$, which can help to show the global continuity of the $W^\infty$ local minimizer $\rho$ as we have seen in the proof of Theorem \ref{Properties of LEM}

\begin{proposition}[Locally Constant Lagrange Multiplier {\cite[Proposition 5.3]{McC06}}] \label{local constant multiplier}
	Let $\rho \in {R}_0\left(\mathbb{R}^3\right)$ minimize $E_J(\kappa)$ among $\kappa \in {R}\left(\mathbb{R}^3\right)$ for which $W^{\infty}(\rho, \kappa) \leq 2 \delta$. Let $M$ be an open set with positive measure and with diameter no greater than $2 \delta$ which intersects spt $\rho$. There is a unique $\lambda \in \mathbb{R}$ depending on $M$ such that (\ref{EL}) holds on $M$ a.e.
\end{proposition}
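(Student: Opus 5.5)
We will follow the standard first-variation argument with mass-preserving perturbations localized in $M$. By Lemma \ref{diff. at mini.} (applied via Corollary \ref{center/support free}), $\rho\in W$ and $E_J$ is $P_\infty(\rho)$-differentiable at $\rho$, with derivative given by the function $E_J'(\rho)$ in (\ref{variational derivative}). Set
\[
F(x):=-E_J'(\rho)(x)=\frac{J^2}{2I^2(\rho)}r^2(x-\bar{x}(\rho))+V_\rho(x)-A'(\rho(x)).
\]
Since $\bar x(\rho)=0$, this is the bracket in (\ref{EL}) minus $A'(\rho)$ with $\lambda=0$. The goal is to show the following: there is a constant $\lambda$ with $A'(\rho)=F+A'(\rho)+\lambda$ a.e. on $M\cap\{\rho>0\}$, and $A'(0)=0\ge F+\lambda$ a.e. on $M\cap\{\rho=0\}$. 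Together these give $A'(\rho)=[\,\cdot\,+\lambda]_+$ a.e. on $M$, using that $A'$ is strictly increasing with $A'(0)=0$ (Lemma \ref{inc. of A'}).

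\textbf{Step 1 (admissible variations).} Take $\sigma_1,\sigma_2\in P_R(\rho)$ supported in $M$ with $\int\sigma_1=\int\sigma_2=1$, and require that $\sigma_2\ge0$ while $\sigma_1$ may be signed only where $\rho\ge R^{-1}$. Put $\sigma=\sigma_1-\sigma_2$. Since $\operatorname{diam}M\le2\delta$ and $\int\sigma=0$, Lemma \ref{properties of Wasser.} (i) gives $W^\infty(\rho,\rho+t\sigma)\le 2\delta$ for small $t>0$. Remark \ref{nonnegative perturbation} gives $\rho+t\sigma\in R(\mathbb{R}^3)$. Minimality then yields $E_J(\rho+t\sigma)\ge E_J(\rho)$, and differentiating at $t=0^+$ via Lemma \ref{diff. of energy} gives
\[
\int E_J'(\rho)\,\sigma_1\,dx\ \ge\ \int E_J'(\rho)\,\sigma_2\,dx .
\]
Because $\sigma_1$ may take either sign on $\{\rho\ge R^{-1}\}\cap M$, replacing $\sigma_1$ by a variation in the opposite direction turns this into an equality whenever both test functions live in $\{\rho>0\}$. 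Some care is needed to make sure $\operatorname{diam}\operatorname{spt}(\rho-\kappa)\le 2\delta$ is exactly what is required; I would use $\le2\delta$ as allowed in the hypothesis.

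\textbf{Step 2 (constant on the positive set).} Using indicators $\mathbf 1_{E}/\mu(E)$ of small sets $E\subset M\cap\{R^{-1}<\rho<R,|x|<R\}$, available by Remark \ref{test function with minimizer bounded} because $M$ is open and meets $\operatorname{spt}\rho$, Step 1 says that the averages of $E_J'(\rho)$ over any two such sets coincide. $E_J'(\rho)$ is locally integrable there, since $V_\rho\in L^{12}$ and $A'(\rho)$ is bounded on these sets. By the Lebesgue differentiation theorem, and letting $R\to\infty$, $E_J'(\rho)$ equals a constant $-\lambda$ a.e. on $M\cap\{\rho>0\}$. This constant is unique because $\mu(M\cap\{\rho>0\})>0$: $M$ is open and intersects $\operatorname{spt}\rho$.

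\textbf{Step 3 (inequality on the zero set).} Take $\sigma_2$ a normalized indicator of a small set in $M\cap\{\rho=0\}$, which lies in $P_R(\rho)$ since it is nonnegative there, and $\sigma_1$ supported in $M\cap\{\rho>0\}$. Moving mass from the positive set to the zero set, Step 1 gives $E_J'(\rho)\ge-\lambda$ a.e. on $M\cap\{\rho=0\}$, that is $V_\rho+\tfrac{J^2}{2I^2}r^2+\lambda\le0=A'(0)$ there. Combined with Step 2, this yields (\ref{EL}) a.e. on $M$.

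\textbf{Main obstacle.} I expect the delicate point to be the bookkeeping in Step 1. The cone $P_R(\rho)$ only permits negative perturbations where $\rho\ge R^{-1}$, so one must check that two-sided variations exist and that the one-sided derivative from Lemma \ref{diff. of energy} applies to the difference $\sigma_1-\sigma_2$. That lemma is stated for $\sigma\in P_\infty(\rho)$, and $P_\infty(\rho)$ is a convex cone containing $\pm\sigma$ only on $\{\rho\ge R^{-1}\}$. Once this is settled, Steps 2 and 3 are routine measure-theoretic arguments.
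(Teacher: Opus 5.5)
Your strategy is the paper's in substance. You take first variations in zero-mass directions supported in $M$, which are legitimate competitors because $W^{\infty}(\rho,\rho+t\sigma)\le \mathrm{diam}\,M\le 2\delta$ by Lemma~\ref{properties of Wasser.}~(i). The paper routes this through Theorem~\ref{Multiplier them} with $u_0=\mathbf{1}_C$, which performs your subtraction internally. Your Step~2 is fine, because there both test sets lie in $\{R^{-1}<\rho<R\}$.

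However, Step~1 puts the sign restriction on the wrong function, and Step~3 fails as written. Since $\sigma=\sigma_1-\sigma_2$, it is the \emph{subtracted} function $\sigma_2\ge0$ that can make $\rho+t\sigma$ negative. Asking only $\sigma_2\in P_R(\rho)$ does not place $\sigma$ in $P_\infty(\rho)$, so neither Remark~\ref{nonnegative perturbation} nor Lemma~\ref{diff. of energy} applies. In Step~3 you take $\sigma_2=\mathbf{1}_K/\mu(K)$ with $K\subset M\cap\{\rho=0\}$ and $\sigma_1$ supported in $M\cap\{\rho>0\}$, so $\rho+t\sigma=-t/\mu(K)<0$ on $K$. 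This is not a competitor. Even formally, your Step~1 inequality would say that the constant value of $E_J'(\rho)$ on the positive set dominates its average over $K$, which is the reverse of what (\ref{EL}) needs. Your phrase ``moving mass from the positive set to the zero set'' describes the right variation, but that variation is $\sigma_2-\sigma_1$, not $\sigma_1-\sigma_2$.

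The missing ingredient, which you flagged as the main obstacle but did not resolve, is a fixed compensator living where $\rho$ is bounded away from $0$ and $\infty$. Pick $C\subset M$ with $\mu(C)>0$ and $R^{-1}<\rho<R$ on $C$; this is possible since $M$ is open and meets $\mathrm{spt}\,\rho$. Then $\pm\mathbf{1}_C\in P_R(\rho)$. For any $\sigma_1\in P_R(\rho)$ supported in $M$, the function $w=\sigma_1-\frac{\int\sigma_1}{\mu(C)}\mathbf{1}_C$ lies in the convex cone $P_R(\rho)$ and has zero mass. Then $E_J'(\rho)(w)\ge0$ gives $\int E_J'(\rho)\sigma_1\,dx\ge\lambda\int\sigma_1\,dx$ with $\lambda=\frac{1}{\mu(C)}\int_C E_J'(\rho)\,dx$.

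Testing with $\mathbf{1}_K$, $K\subset M\cap\{\rho=0\}$, yields $E_J'(\rho)\ge\lambda$ a.e.\ on the zero set. Testing with $\pm\mathbf{1}_K$, $K\subset M\cap\{R^{-1}<\rho<R\}$, yields equality on the positive set. These are exactly the paper's Claims (1)--(2).

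Two smaller points:
\begin{itemize}
\item If $E_J'(\rho)=-\lambda$ on $M\cap\{\rho>0\}$ as in your Step~2, then the multiplier appearing in (\ref{EL}) is $-\lambda$, and the Step~3 bracket should read $V_\rho+\frac{J^2}{2I^2}r^2-\lambda\le0$.
\item Corollary~\ref{center/support free} is unnecessary, since the hypothesis already makes $\rho$ a local minimizer on ${R}(\mathbb{R}^3)$.
\end{itemize}
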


\begin{proof}
	{\rm
		We refer to \cite{McC06}, but slightly modify the arguments and provide a more detailed account (see Remark \ref{improvement about local chemical potential}). Define the convex cone $P_{l o c}:=\left\{\sigma \in P_{\infty} \mid \text{spt } \sigma \subset M\right\}$, which is a cone, and let $U=\{\rho \in X \mid \rho\geq 0, U(\rho)< \left.\infty, G(\rho, \rho)<\infty, T_J(\rho)<\infty\right\}$ be the same as in subsection \ref{subsection5.1-variational derivative}. Thanks to Lemma \ref{diff. of energy} and Lemma \ref{diff. at mini.}, we have $\rho \in U, E_J(\rho)$ is $P_{l o c}$-differentiable at $\rho$. Given $\kappa \in W_{l o c}={R}\left(\mathbb{R}^3\right) \cap\{\rho+\sigma \in U \mid \sigma \in P_{l o c}\}$, where the mass constraint is satisfied, thanks to Lemma \ref{properties of Wasser.} ($i$) we have $W^{\infty}(\rho, \kappa) \leq \operatorname{diam} M \leq 2 \delta$, which shows that $\rho$ minimizes $E_J(\kappa)$ on $W_{l o c}$. In particular, $\rho$ is a weak local minimizer restricted on $W_{loc}$ (see Appendix \ref{sectionA-calculus of variations in vector spaces} for the definition). Moreover, since the open set $M$ intersects spt $\rho$, it must carry positive mass under $\rho$, see for example \cite[Section 3.4, Exercise 25]{Fol13}. Therefore, similar to Remark \ref{test function with minimizer bounded}, there is a smaller subset $C \subset M$ of positive measure on which $\rho(x)$ is bounded away from zero and infinity. Let $\mathbf{1}_C$ be the indicator function of set $C$, then by construction we have both $\pm \mathbf{1}_{C} \in P_{l o c}$. For all $\sigma \in P_{loc}$, let $w=\sigma-\frac{\int_{\mathbb{R}^3} \sigma\,dx}{\int_{\mathbb{R}^3} \mathbf{1}_C\,dx}\mathbf{1}_C$, thanks to Remark \ref{nonnegative perturbation}, we know $\rho+tw \in W_{loc}$ when $t$ is small enough. Thanks to Theorem \ref{Multiplier them} and Remark \ref{subset of linear space}, the conditions above imply that there is a unique Lagrange multiplier $\lambda \in R$ such that $\forall \sigma \in P_{l o c}$, we have
		\begin{equation}\label{lag. them. in 1.4}
			\int_{\mathbb{R}^3} E_J^{\prime}(\rho) \sigma\,dx \geq \lambda \int_{\mathbb{R}^3} \sigma\,dx
		\end{equation}
		
		Claim: 
		\begin{enumerate}
			\item[(1)] $
			E_J^{\prime}(\rho)(x)=A^{\prime}(\rho(x))-V_\rho(x)-\frac{J^2}{2 I^2(\rho)} r^2(x-\bar{x}(\rho)) \geq \lambda \quad \text{on } M \text{ a.e.}$
			\item[(2)] $
			E_J^{\prime}(\rho)=\lambda \quad \text{on } M \cap\{x \mid \rho(x)>0\} \text{ a.e.}$
		\end{enumerate}
		
		In fact, if $E_J^{\prime}(\rho)<\lambda$ on a subset $K \subset M$ which has positive measure, similar to Remark \ref{test function with positive measure support}, this subset may be taken slightly smaller so that $\rho$ is bounded on $K$. Then we take $\sigma=\mathbf{1}_K \in P_{loc}$ and would see it contradict (\ref{lag. them. in 1.4}), thus Claim (1) holds true. On the other hand, if $E_J^{\prime}(\rho)>\lambda$ on a subset $K \subset M$ with positive measure and $\rho>0$ in $K$, then similarly $K$ may be taken slightly smaller so that $\rho$ is bounded away from zero and infinity on $K$; in this case $-\mathbf{1}_K \in P_{l o c}$ can make a contradiction to (\ref{lag. them. in 1.4}) again, thus Claim (2) holds true.
		
		By \eqref{A', A and P relation} we know $A'(0)=0$, and $A'(s)>0$ when $s>0$. If $V_\rho(x)+\frac{J^2}{2 I^2(\rho)} r^2(x-\bar{x}(\rho))+\lambda>0$, then by Claim (1) $A^{\prime}(\rho(x)) \geq V_\rho(x)+\frac{J^2}{2 I^2(\rho)} r^2(x-\bar{x}(\rho))+\lambda>0$, which implies $\rho>0$. By Claim (2) we can replace inequality by equation and get $A^{\prime}(\rho(x))=V_\rho(x)+\frac{J^2}{2 I^2(\rho)} r^2(x-\bar{x}(\rho))+\lambda$. If $V_\rho(x)+ \frac{J^2}{2 I^2(\rho)} r^2(x-\bar{x}(\rho))+\lambda \leq 0$, then $\rho=0$, otherwise $\rho>0$ implies again $0< A^{\prime}(\rho(x))=V_\rho(x)+\frac{J^2}{2 I^2(\rho)} r^2(x-\bar{x}(\rho))+\lambda \leq 0$, which leads to a contradiction. Therefore, we have (\ref{EL}), that is $A^{\prime}(\rho(x))=\left[V_\rho(x)+\frac{J^2}{2 I^2(\rho)} r^2(x-\bar{x}(\rho))+\lambda\right]_{+}$ holds for almost all $x \in M$. Since $M$ has positive measure and $\rho$ in $M$ has positive mass, we know there exists a $x_0$ such that  $\rho(x_0)>0$ and thus $A^{\prime}(\rho(x_0))>0$, and (\ref{EL}) holds at $x_0$. In particular, $0<A^{\prime}(\rho(x_0))=V_\rho(x_0)+\frac{J^2}{2 I^2(\rho)} r^2(x_0-\bar{x}(\rho))+\lambda$. Hence, it is clear that $\lambda$ is unique and given by this expression.
	}
\end{proof}

\begin{remark}\label{improvement about local chemical potential}
	Compared to McCann's proof, we have added the following discussions:
	
	1.	When deriving inequality (\ref{lag. them. in 1.4}), McCann simply referenced Theorem \ref{Multiplier them}. However, the scope of Theorem \ref{Multiplier them} does not fully cover the situation here. Therefore, we supplemented the proof with Remark \ref{subset of linear space} and utilized both Theorem \ref{Multiplier them} and Remark \ref{subset of linear space} to reach the conclusion.
	
	2.	After deriving Claim (1) and Claim (2), McCann directly asserted Proposition \ref{local constant multiplier}'s conclusion that  (\ref{EL}) holds on $M$ a.e. In contrast, we added an explanation detailing how this conclusion is obtained.
\end{remark}

\begin{remark}\label{local but not weak local}
	Recall that in Section \ref{section4-Wasserstein metric}, we know $\rho+t w \in {R}\left(\mathbb{R}^3\right)$ may not be $W^{\infty}$-continuous as function of $t$. In other words, whether a local minimizer is also a weak local minimizer remains unclear in general. However, at least we know a minimizer restricted to $W_{loc}$ is a weak local minimizer restricted to $W_{loc}$, so that we can apply Theorem \ref{Multiplier them} and Remark \ref{subset of linear space}. This is why we first consider the local case Proposition \ref{local constant multiplier} (the diameter of $M$ is small) before addressing the next case Proposition \ref{constant multiplier}.
\end{remark}

\begin{proposition}[Componentwise Constant Lagrange Multiplier {\cite[Proposition 5.4]{McC06}}] \label{constant multiplier}
	Let $\rho \in {R}_0\left(\mathbb{R}^3\right)$ minimize $E_J(\kappa)$ among $\kappa \in {R}\left(\mathbb{R}^3\right)$ for which $W^{\infty}(\rho, \kappa)<2 \delta$. $\Omega_i$ is one of the connected components of the $\delta$-neighbourhood of spt $\rho$. Then there is a constant $\lambda_i<0$ such that (\ref{EL}) holds a.e. on $\Omega_i$.
\end{proposition}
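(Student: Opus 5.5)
The plan is to glue together the local multipliers given by Proposition \ref{local constant multiplier} along chains of small overlapping balls inside $\Omega_i$. Then I show the resulting constant is negative.

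\textbf{Step 1: one multiplier per component.} For each $x\in\Omega_i$, take an open ball $B_x$ centered at $x$ with diameter at most $2\delta$. I will arrange each ball to meet $\operatorname{spt}\rho$: a point of the $\delta$-neighbourhood lies within distance $\delta$ of $\operatorname{spt}\rho$, so a ball of radius slightly larger than $\delta$ works. To keep the diameter at most $2\delta$, I would instead use balls centered at points $y\in\operatorname{spt}\rho$ of radius $\delta$. These balls cover the $\delta$-neighbourhood, and their union is exactly that neighbourhood. Each such ball $M$ is open, has diameter $2\delta$, and intersects $\operatorname{spt}\rho$. So Proposition \ref{local constant multiplier} yields a unique $\lambda_M$ with (\ref{EL}) a.e.\ on $M$.

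Now suppose two balls $M,M'$ in $\Omega_i$ overlap. Their intersection is open with positive measure, but it may not carry $\rho$-mass. I would handle this by a refinement: consider the ball $M''$ of diameter at most $2\delta$ centered at a point of $M\cap M'$, chosen so that it meets $\operatorname{spt}\rho$. Uniqueness of $\lambda$ then transfers across overlaps whenever (\ref{EL}) holds a.e.\ on a common set on which $\rho>0$ with positive measure. This is because on $\{\rho>0\}$ the equation pins $\lambda=A'(\rho)-V_\rho-\frac{J^2}{2I^2}r^2$. The simpler route is to define $\lambda(y)$ for $y\in\operatorname{spt}\rho$ as the multiplier of the small ball $B_\epsilon(y)$ (any $\epsilon\le\delta$). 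By uniqueness, $\lambda$ is constant on each such ball intersected with $\operatorname{spt}\rho$ neighbourhoods, since nested balls both meeting the support share the same multiplier via the formula on the positive-mass set. So $y\mapsto\lambda(y)$ is locally constant on $\operatorname{spt}\rho\cap\Omega_i$ with respect to balls of radius $\delta$. Since $\Omega_i$ is connected and is the union of the balls $B_\delta(y)$, $y\in\operatorname{spt}\rho\cap\Omega_i$, a standard chaining argument gives a single $\lambda_i$. The chaining works as follows: the set of $y$ with $\lambda(y)=\lambda(y_0)$ yields an open-closed union of balls. (\ref{EL}) then holds a.e.\ on each ball, hence on $\Omega_i$.

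\textbf{Step 2: the sign $\lambda_i<0$.} Points of $\Omega_i$ near its boundary, at distance close to $\delta$ from the support, have $\rho=0$ a.e. I would use the global structure: $\rho$ has compact support and $\Omega_i$ is bounded, so $\Omega_i\setminus\operatorname{spt}\rho$ is a nonempty open set. There $A'(\rho)=0$, so $V_\rho+\frac{J^2}{2I^2}r^2+\lambda_i\le0$ a.e. Since $V_\rho>0$ everywhere (positive mass), this gives $\lambda_i<-\frac{J^2}{2I^2}r^2\le0$.

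\textbf{Main obstacle.} The delicate point is Step 1's identification of multipliers across overlapping balls. It needs an intersection on which $\rho>0$ on a set of positive measure, not merely an intersection meeting $\operatorname{spt}\rho$. I would guarantee this by comparing each ball with a tiny ball around a common support point, using the positive mass of open sets meeting the support as in Proposition \ref{local constant multiplier}.
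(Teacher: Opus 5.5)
Your Step 2 is fine; it is the paper's sign argument in contrapositive form. The gap is in the chaining at the end of Step 1. Your ``simpler route'' only establishes $\lambda(y)=\lambda(y')$ for $y,y'\in\operatorname{spt}\rho$ with $|y-y'|<\delta$: in that case a tiny ball around $y'$ sits inside $B_\delta(y)$ and carries $\rho$-mass. Connectedness of $\Omega_i=\bigcup_{y}B_\delta(y)$, however, only links support points through overlapping balls, i.e.\ through steps with $|y-y'|<2\delta$. For the sets $\bigcup_{\lambda(y)=\lambda(y_0)}B_\delta(y)$ and $\bigcup_{\lambda(y)\neq\lambda(y_0)}B_\delta(y)$ to separate $\Omega_i$, you need any two overlapping balls to have the same multiplier, and you never prove this. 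Concretely, suppose $\operatorname{spt}\rho\cap\Omega_i$ consists of two pieces at distance in $[\delta,2\delta)$. Their $\delta$-neighbourhoods merge into a single $\Omega_i$, yet no chain of support points with steps $<\delta$ joins them. The remedy in your ``main obstacle'' paragraph, comparing with a tiny ball around a common support point, does not help, because $B_\delta(y)\cap B_\delta(y')$ may contain no point of $\operatorname{spt}\rho$ at all. (Incidentally, an open set that meets $\operatorname{spt}\rho$ already has $\rho>0$ on a set of positive measure, so that is not an additional requirement.)

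The missing observation is short. If $z\in B_\delta(y)\cap B_\delta(y')$, then $|z-y|<\delta$ and $|z-y'|<\delta$, so $B_\delta(z)$ contains both $y$ and $y'$. Apply Proposition \ref{local constant multiplier} to $B_\delta(z)$ and use uniqueness on small balls around $y$ and around $y'$; this gives $\lambda(y)=\lambda_{B_\delta(z)}=\lambda(y')$, after which your open--closed argument goes through.

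The paper's proof is organized so that this issue never arises. Because the $\delta$-neighbourhood is defined with a strict inequality, $B_\delta(x)$ meets $\operatorname{spt}\rho$ for every $x\in\Omega_i$, so your detour to balls centred on the support was unnecessary. The paper therefore defines $\lambda(x)$ for all $x\in\Omega_i$. For fixed $x_0$, some $B_{\delta-\epsilon}(x_0)$ still meets $\operatorname{spt}\rho$, and for $|x-x_0|<\epsilon$ one has $B_{\delta-\epsilon}(x_0)\subset B_\delta(x)\cap B_\delta(x_0)$. This intersection carries mass, so uniqueness gives $\lambda(x)=\lambda(x_0)$. Thus $\lambda$ is locally constant on the open connected set $\Omega_i$, hence constant.
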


\begin{proof}
	{
		\rm
		Given $y \in \Omega_i$, we know the ball $B_\delta(y)$ intersects spt $\rho$, where $B_\delta(y)$ is the open ball defined by $B_\delta(y):=\left\{x \in \mathbb{R}^3\mid | x-y| <\delta\right\}$. Thus Proposition \ref{local constant multiplier} guarantees a unique $\lambda(y)$ such that (\ref{EL}) holds a.e. on $B_\delta(y)$. The claim is that $\lambda(y)$ is independent of $y$. In fact, fix $y_0 \in \Omega_i$, since $B_\delta(y_0)$ is open, it will also be true that a slightly smaller ball $B_{\delta-\epsilon}(y_0)$ intersects spt $\rho$. If $|x-y_0|<\epsilon$, then $M=B_\delta(x) \cap B_\delta(y_0)$ intersects spt $\rho$ since $B_{\delta-\epsilon}(y_0) \subset M$, and then we know $x\in \Omega_i$. $M$ has positive measure, thanks to Proposition \ref{local constant multiplier}, the uniqueness of $\lambda$ corresponding to $M$ forces $\lambda(x)=\lambda(y_0)$. Thus $\lambda(y)$ is locally constant. As a result, the disjoint sets $C=\left\{x \in \Omega_i \mid \lambda(x)=\lambda(y_0)\right\}$ and $D=\left\{x \in \Omega_i \mid \lambda(x) \neq \lambda(y_0)\right\}$ are both open. Since $\Omega_i=C \cup D$ is connected and $y_0 \in C$ implies $C$ is not empty, $C=\Omega_i$. Defining $\lambda_i:=\lambda(y_0)$,(\ref{EL}) must be satisfied a.e. on $\Omega_i$.
		
		Now we show $\lambda<0$: The distance between any point on the boundary of $\Omega_i$ and spt $\rho$ cannot be smaller than $\delta$. Since spt $\rho$ is bounded, $\Omega_i$ has non-empty boundary, and it follows that $\rho(x)=0$ on a set of positive measure in $\Omega_i$. On the other hand, $A^{\prime}(\rho) \rho- A(\rho)=P(\rho)$ implies $A^{\prime}(\rho(x))$ vanishes only if $\rho(x)=0$. $\lambda \geq 0$ in (\ref{EL}) would imply $\rho>$ 0 a.e. on $\Omega_i$, a contradiction.
	}
\end{proof}

\begin{remark}\label{connected components and delta neighborhood}
	Notice we discuss above the connected components of the $\delta$-neighbourhood of spt $\rho$. But the result can be easily converted to the case about the $\delta$-neighbourhood of connected components of spt $\rho$.
\end{remark}

\begin{remark}\label{2 ways to show negative multiplier 1}
	From the above proof, we can see the power of Wasserstein metric, which can extend the Euler-Lagrange equation (\ref{EL}) to a $\delta$-neighborhood of connected component of spt $\rho$, which help us to show the global continuity of $\rho$ and then (\ref{EP'}) holds as we discuss in Theorem \ref{Properties of LEM}. Moreover, it plays a role in the process of proving the Lagrange multiplier is negative. Actually, there is another way to show Lagrange multipliers should be negative, given by Auchmuty and Beals \cite{AB71}, with different conditions. 
\end{remark}

\subsection{Minimizers' existence and nonexistence}\label{subsection5.3-minimizers' existence and nonexistence}
In Section \ref{section4-Wasserstein metric}, we know $\rho+t w \in {R}\left(\mathbb{R}^3\right)$ may not be $W^{\infty}$-continuous as a function of $t$, and whether a local minimizer is also a weak local minimizer remains unclear in general. For comparison, we prove a local minimizer is a weak local minimizer under the topology inherited from a topological vector space. Furthermore, we demonstrate that, in this case, a local minimizer with finite energy does not exist.

Denote $X = L^{\frac{4}{3}}\left( \mathbb{R}^{3} \right)\cap L^{1}\left( \mathbb{R}^{3} \right)$, $\widetilde{U}=\{\rho \in X \mid \rho \geq 0\}$. We first show in the topology inherited from the topological vector space, a local minimizer is a weak local minimizer.

\begin{lemma}\label{local implies weak local}
	If the topology on ${R}\left(\mathbb{R}^3\right)$ is inherited from a topological vector space, and $\rho$ is a local minimizer for $E_J(\rho)$ restricted on ${R}\left(\mathbb{R}^3\right)$, then $\rho$ is a weak local minimizer restricted on ${R}\left(\mathbb{R}^3\right)$, which is defined in Appendix \ref{sectionA-calculus of variations in vector spaces}.
\end{lemma}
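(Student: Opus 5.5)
The argument uses only the continuity of the affine path $t\mapsto \rho+tw$ in a topological vector space. I take the definition of weak local minimizer from Appendix \ref{sectionA-calculus of variations in vector spaces} to be the usual one: $\rho$ is a weak local minimizer restricted on ${R}(\mathbb{R}^3)$ if, for every direction $w$ with $\rho+tw\in {R}(\mathbb{R}^3)$ for all sufficiently small $t\geq 0$, there is $t_0>0$ such that $E_J(\rho)\le E_J(\rho+tw)$ for all $t\in[0,t_0)$. Here the admissible directions are those in $P(\rho)$, i.e.\ those for which the perturbation stays admissible. Since $\rho$ is a local minimizer for the topology $\tau$ that ${R}(\mathbb{R}^3)$ inherits from some topological vector space $Y\supset {R}(\mathbb{R}^3)$, there is an open neighbourhood $N$ of $\rho$ in $Y$ such that $E_J(\rho)\le E_J(\kappa)$ for every $\kappa\in N\cap {R}(\mathbb{R}^3)$.

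Fix an admissible direction $w$, so that $\rho+tw\in {R}(\mathbb{R}^3)$ for $t\in[0,t_1)$. Here $w$ must itself lie in $Y$, since it is the difference of two elements of ${R}(\mathbb{R}^3)\subset Y$ rescaled by $t^{-1}$. I will use two axioms of a topological vector space: scalar multiplication $\mathbb{R}\times Y\to Y$ and addition $Y\times Y\to Y$ are continuous. It follows that the map $\gamma:\mathbb{R}\to Y$, $\gamma(t)=\rho+tw$, is continuous, and $\gamma(0)=\rho\in N$. Hence $\gamma^{-1}(N)$ is an open subset of $\mathbb{R}$ containing $0$, so there is $t_2>0$ with $\rho+tw\in N$ for $|t|<t_2$.

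Set $t_0=\min\{t_1,t_2\}$. For $t\in[0,t_0)$ we have $\rho+tw\in N\cap {R}(\mathbb{R}^3)$, so $E_J(\rho)\le E_J(\rho+tw)$. This is exactly the weak local minimality condition.

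The only delicate step is bookkeeping rather than analysis. I must check that the admissible directions in the appendix definition are genuinely elements of the ambient vector space $Y$, so that the path $\gamma$ makes sense in $Y$ and not merely in $X$ or $L^1$. If the appendix phrases admissibility via $\widetilde U$ or $X=L^{\frac43}\cap L^1$, I would note that $Y$ is assumed to contain ${R}(\mathbb{R}^3)$ together with its linear span, so $w=(\kappa-\rho)/t$ lies in that span. This contrasts with Remark \ref{local but not weak local}: there the failure comes from $W^\infty$ not being induced by a vector space structure, so $t\mapsto\rho+tw$ need not be continuous. Here continuity is automatic.
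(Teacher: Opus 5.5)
Your proof is correct and follows the paper's argument. You take exactly the directions in $P(\rho)\cap N(\mathbb{R}^3)$, i.e.\ the $w$ with $\rho+tw\in R(\mathbb{R}^3)$ for small $t\ge 0$, and use continuity of $t\mapsto\rho+tw$ in $Y$ to land in the neighbourhood $O$; this gives $E_J(\rho)\le E_J(\rho+tw)$ for all small $t$, and hence a fortiori the sequence-based definition of weak local minimum in the appendix, while your observation that $w=\big((\rho+tw)-\rho\big)/t\in Y$ makes explicit a point the paper uses implicitly.
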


\begin{proof}
	{
		\rm
		We denote $\left. N\left( \mathbb{R}^{3} \right) = \left\{ {\sigma \in L^{\frac{4}{3}}\left( \mathbb{R}^{3} \right)} \right|\int_{\mathbb{R}^3}\sigma \,dx= 0 \right\}$. We recall the setup in Appendix \ref{sectionA-calculus of variations in vector spaces}, that $P(\rho) = \left\{ {\sigma \in X} \right|\exists\epsilon(\sigma),~\text{such that}~\rho + t\sigma \in \widetilde{U},~0 \leq t < \epsilon(\sigma)\}$.
		
		Since the topology on $R(\mathbb{R}^3)$ is inherited from a topological vector space, we know there is a topological vector space $Y$ with topology $\mathcal{T}$, such that $R(\mathbb{R}^3)$ is a subset of $Y$, with the collection $\mathcal{T}_{R} = \left\{ R\left( \mathbb{R}^{3} \right) \cap O \middle| O \in \mathcal{T} \right\}$ being the topology on $R(\mathbb{R}^3)$. (See \cite[Chapter 1, Section 16]{Mun00}). 
		
		Since $\rho$ is a local minimizer restricted on $R(\mathbb{R}^3)$, then $\exists O_R\in \mathcal{T}_R$, such that $\rho\in O_R$ and $E_J (\rho)\leq E_J (\kappa)$ for all $\kappa \in O_R$. Notice that $O_R=R(\mathbb{R}^3 )\cap O$ for some $O\in \mathcal{T}$, and thus $\rho \in O$. Given $\sigma \in P(\rho)\cap N(\mathbb{R}^3)$, we know $\rho+t\sigma\in R(\mathbb{R}^3 )\subset Y$. Since $Y$ is a topological vector space, we know when $t$ is small enough, $\rho+t\sigma \in O$. Therefore, $\rho+t\sigma\in R(\mathbb{R}^3 )\cap O=O_R$, and then $E_J (\rho)\leq E_J(\rho+t\sigma)$ for $t$ small enough. Then we know $\rho$ is a weak local minimizer restricted on $R(\mathbb{R}^3)$.  
	}
\end{proof}

\begin{proposition}[No Local Minimizers with Finite Energy in a Vector Space Topology {\cite[Remark 3.7]{McC06}}]\label{no local minimizers with finite energy in a vector space topology}
	For $J>0$, if the topology on ${R}\left(\mathbb{R}^3\right)$ is inherited from a topological vector space, then there is no local minimizer for $E_J$ restricted on ${R}\left(\mathbb{R}^3\right)$, with $E_J(\rho)<\infty$.
\end{proposition}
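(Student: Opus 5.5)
We argue by contradiction. Suppose $\rho$ is a local minimizer of $E_J$ on $R(\mathbb{R}^3)$ in a topology inherited from a topological vector space, with $E_J(\rho)<\infty$. By Remark \ref{equivalence of finite total energy and finite internal energy}, $U(\rho)<\infty$, so $\rho\in W$. Lemma \ref{local implies weak local} makes $\rho$ a weak local minimizer on $R(\mathbb{R}^3)$: $E_J(\rho)\le E_J(\rho+t\sigma)$ for every $\sigma\in P(\rho)\cap N(\mathbb{R}^3)$ and all small $t\ge 0$. Unlike the $W^\infty$ case, no diameter restriction on $\operatorname{spt}\sigma$ is needed. Lemma \ref{diff. of energy} shows $E_J$ is $P_\infty(\rho)$-differentiable at $\rho$ with derivative given by (\ref{variational derivative}). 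So we can run the Lagrange multiplier argument of Proposition \ref{local constant multiplier} with $M$ replaced by all of $\mathbb{R}^3$. We take $P_{loc}=P_\infty(\rho)$ and a fixed set $C$ on which $\rho$ is bounded away from $0$ and $\infty$ (Remark \ref{test function with minimizer bounded}), and use Theorem \ref{Multiplier them} with Remark \ref{subset of linear space}. This gives a single $\lambda\in\mathbb{R}$ such that
\[
A'(\rho(x))=\Big[V_\rho(x)+\tfrac{J^2}{2I^2(\rho)}r^2(x-\bar x(\rho))+\lambda\Big]_+ \quad\text{for a.e. } x\in\mathbb{R}^3 .
\]
Claims (1) and (2) of that proof go through verbatim, since the test functions $\pm\mathbf{1}_K$ lie in $P_\infty(\rho)$ for suitable bounded $K$ of positive measure.

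Next we extract the contradiction from the centrifugal term. We have $V_\rho\ge 0$, and $\frac{J^2}{2I^2(\rho)}r^2(x-\bar x(\rho))\to\infty$ as $r(x)\to\infty$, with $J>0$ and $I(\rho)<\infty$ positive. Hence the bracket is positive, and in fact at least $1$, on the unbounded set $S=\{x: r(x-\bar x(\rho))>R_0\}$ for $R_0$ large. Therefore $A'(\rho)\ge 1$ a.e. on $S$, so $\rho\ge \phi(1)>0$ a.e. on $S$, using $\phi=(A')^{-1}$ from Remark \ref{existence of inverse}. Since $S$ contains an infinite cylinder shell region of infinite Lebesgue measure, $\int\rho\,dx=\infty$. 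This contradicts $\int\rho\,dx=1$.

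We must also check that $I(\rho)$ is finite; otherwise $T_J(\rho)=0$ and the centrifugal term degenerates. If $I(\rho)=\infty$, the same argument still gives $A'(\rho)\ge V_\rho+\lambda$ a.e. and $A'(\rho)=V_\rho+\lambda$ where $\rho>0$. Since $V_\rho>0$ everywhere, $\lambda\ge0$ would force $\rho>0$ everywhere with $A'(\rho)\ge V_\rho$, and a decay estimate is needed there. A cleaner route is to handle $I(\rho)=\infty$ separately by a direct energy-lowering perturbation, or to note that the definition of $I$ presupposes finiteness. I expect this case, together with verifying the hypotheses of Theorem \ref{Multiplier them} on the unbounded cone, to be the main technical point. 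In particular we need $\rho+tw\in R(\mathbb{R}^3)$ for $w=\sigma-\frac{\int\sigma}{|C|}\mathbf{1}_C$ and small $t$, which follows from Remark \ref{nonnegative perturbation} exactly as before. The centrifugal blow-up step itself is the conceptual heart and is straightforward once the global Euler–Lagrange equation holds.
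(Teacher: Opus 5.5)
Your proposal is correct and follows the paper's proof essentially step for step. Finite energy gives $\rho\in W$. Lemma \ref{local implies weak local}, Lemma \ref{diff. of energy} and the multiplier argument of Proposition \ref{local constant multiplier} (now with no diameter restriction) then yield (\ref{EL}) a.e.\ on all of $\mathbb{R}^3$ with a single $\lambda$, and the unbounded centrifugal term forces $\rho$ to be bounded below on a set of infinite measure. The case $I(\rho)=\infty$ that you leave open is not treated in the paper either, which tacitly takes $\bar x(\rho)$ and $I(\rho)$ finite (as already in Lemma \ref{diff. of energy}), so under that convention your argument is complete.
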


\begin{remark}
	Note that, unlike in McCann's statement \cite[Remark 3.7]{McC06}, here we first assume the local minimal energy $E_J(\rho)$ is finite, so that we can mimic the arguments in Lemma \ref{diff. of energy} and then make a contradiction.
\end{remark}

\begin{proof}[Proof of Proposition \ref{no local minimizers with finite energy in a vector space topology}]
	{
		\rm
		Suppose $\rho$ is a local minimizer restricted on $R(\mathbb{R}^3)$, with $E_J(\rho)<\infty$. Thanks to Remark \ref{equivalence of finite total energy and finite internal energy}, we know $U(\rho)<\infty$, that is, $A(\rho)$ is integrable. Therefore, similar arguments to Lemma \ref{diff. of energy}, we know $P_\infty (\rho)\subset P(\rho)=\{\sigma \in X \mid \exists \epsilon(\sigma), \text{such that~} \rho+t\sigma \in \widetilde{U}, 0 \leq t< \epsilon(\sigma)\}$, and $E_J(\rho)$ is $P_\infty (\rho)$-differentiable at $\rho$.
		Thanks to Lemma \ref{local implies weak local}, we know $\rho \in{R}\left(\mathbb{R}^3\right)$ is a weak local minimizer restricted on $R(\mathbb{R}^3)$. Let $\sigma \in P_\infty(\rho)$, similar to the discussion in Proposition \ref{local constant multiplier}, we can again derive (\ref{lag. them. in 1.4}) here. Notice there are enough functions $\sigma$ we obtain (\ref{EL})
		$$
		A^{\prime}(\rho(x))=\left[\frac{J^2}{2 I^2(\rho)} r^2(x-\bar{x}(\rho))+V_\rho(x)+\lambda\right]_{+}
		$$
		is satisfied a.e. on $\mathbb{R}^3$ for a fixed $\lambda$. 
		Then we have for almost all $x\in \mathbb{R}^3$,
		$$
		A^{\prime}(\rho(x)) \geq \frac{J^2}{2 I^2(\rho)} r^2(x-\bar{x}(\rho))+\lambda
		$$
		Since $A'$ is strictly increasing, this inequality implies for any $C>0$, $\exists R>0$, such that $\rho(x)>C$ almost everywhere in the region $r(x)>R$ , which contradicts the fact that $\rho \in {R}\left(\mathbb{R}^3\right)$ has mass 1.
	}
\end{proof}


\begin{remark}
	Inspired by McCann's Remark \cite[Remark 3.7]{McC06}, we can also understand physically Proposition \ref{no local minimizers with finite energy in a vector space topology} as the following: suppose there is a small portion of mass in a rotating star which is moved to a very distant location, that is, from $\kappa=\rho(x)+\sigma(x)$ to $\kappa_N=\rho(x)+\sigma(x-N)$ for $N$ large. Then, intuitively, as $N \rightarrow \infty$, the internal energy $U$ and gravitational interaction energy $G$ will not change too much, while $I(\kappa_N) \rightarrow \infty$, which implies the kinetic energy $T_J \rightarrow 0$, leading to the decrease of $E_J$ in the end.
\end{remark}

\begin{remark}\label{R instead R_0}
	In McCann’s discussion \cite[Remark 3.7]{McC06}, the conclusion considers $\rho$ as a local minimizer restricted to $R_0(\mathbb{R}^3)$ rather than $R(\mathbb{R}^3)$. However, as revealed by the proof of Proposition \ref{no local minimizers with finite energy in a vector space topology}, if we focus solely on the restriction to $R_0(\mathbb{R}^3)$, it becomes challenging to demonstrate the existence of sufficiently many functions $\sigma$ to ensure that the  (\ref{EL}) holds. Consequently, it is difficult to establish the non-existence of such $\rho$.
\end{remark}

\begin{remark}
	From the proof above we notice that the issue comes from the fact that even if $\sigma(x)$ remains positive when $x$ is far away from spt $\rho$, $\rho + t\tilde{\sigma}$ is still continuous with respect to $t$, which implies local minimizer $\rho$ is also a weak local minimizer. Fortunately, this situation does not occur if ${R}\left(\mathbb{R}^3\right)$ is topologized via the Wasserstein $L^{\infty}$ metric, because continuity of $\rho + t\tilde{\sigma}$ is not guaranteed under this topology, as discussed in Section \ref{section4-Wasserstein metric}. 
%
\end{remark}

From the proof of Proposition \ref{no local minimizers with finite energy in a vector space topology}, we obtain the nonexistence of local minima with finite energy $E_J(\rho)$. The finiteness assumption is essential for following the arguments in Lemma \ref{diff. of energy} to compute the derivative of $U(\rho)$. However, if the minimal energy $E_J(\rho)$ blows up, i.e., becomes infinite, the arguments in Lemma \ref{diff. of energy} can no longer be utilized.
%

The following Proposition \ref{existence of weak local minima when energy blows up} leads to a conjecture that the statement regarding local minimizer (w.r.t. topology inherited from the topological vector space) having finite energy might not hold. Note that this proposition differs from Lemma \ref{diff. at mini.}, making a comparison interesting.

We first make an additional assumption of $P(\rho)$, which helps us to compare $A(\rho)$ and $A(\lambda\rho)$:
\begin{enumerate}
	\item[\mylabel{F5}{(F5)}] There exists $\lambda \in (0,1)$, such that 
	\begin{equation}\label{formula of F5}
		{\limsup\limits_{\rho\rightarrow\infty}\frac{\int_{\lambda\rho}^{\rho}{P(\tau)\tau^{- 2}d\tau}}{\int_{0}^{\lambda\rho}{P(\tau)\tau^{- 2}d\tau}}} < \infty
	\end{equation}
\end{enumerate}

\begin{remark}
	One can check this condition will be satisfied in the case $P(\rho)=K\rho^\gamma$.
\end{remark}

\begin{remark}\label{large lambda}
	Notice that if we can find a $\lambda$ such that (\ref{formula of F5}) holds, then $\forall \widetilde{\lambda}\in (\lambda,1)$, (\ref{formula of F5}) also holds.
\end{remark}

\begin{proposition}[Existence of Weak Local Minimizer when Energy Blows Up]\label{existence of weak local minima when energy blows up}
	If $P(\rho)$ in addition satisfies \ref{F5}, and $\rho \in R_0 (\mathbb{R}^3)$ satisfies $E_J (\rho)=\infty$. Then $\forall \sigma \in P(\rho)=\{\sigma \in X \mid \exists \epsilon(\sigma), \text{such that~} \rho+t\sigma \in \widetilde{U}, 0 \leq t< \epsilon(\sigma)\}$, $\exists \delta>0$, such that $\forall t\in (0,\delta)$, we have $E_J(\rho+t\sigma)=\infty$. In particular, $\rho$ is a weak local minimizer (see Appendix \ref{sectionA-calculus of variations in vector spaces}) for $E_J (\rho)$ on $R(\mathbb{R}^3)$. 
\end{proposition}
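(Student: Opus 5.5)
Since $E_J(\rho)=\infty$ and $\rho\in R_0(\mathbb{R}^3)\subset X$ has unit mass, Remark \ref{equivalence of finite total energy and finite internal energy} gives $U(\rho)=\int A(\rho)\,dx=\infty$, while $G(\rho,\rho)$ and $T_J(\rho)$ are finite. For $\sigma\in P(\rho)$ and small $t\ge0$, the function $\rho+t\sigma$ is nonnegative, lies in $X$, and has mass close to $1$, hence positive. Therefore $G(\rho+t\sigma,\rho+t\sigma)<\infty$ by Proposition \ref{finite gravitational interaction energy}, and $T_J(\rho+t\sigma)<\infty$ by Remark \ref{positive MoI}. The whole problem thus reduces to showing $U(\rho+t\sigma)=\infty$ for all small $t>0$. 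The weak local minimality then follows at once: for $\sigma\in P(\rho)$ with $\rho+t\sigma\in R(\mathbb{R}^3)$ we get $E_J(\rho)=\infty=E_J(\rho+t\sigma)$, so $E_J(\rho)\le E_J(\rho+t\sigma)$ for $t\in(0,\delta)$, which is exactly the definition in the Appendix.

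To show $U(\rho+t\sigma)=\infty$, I would write $\rho+t\sigma=\rho+t\sigma_+-t\sigma_-$ and use only the lower bound $\rho+t\sigma\ge \rho-t\sigma_-$. Here $\rho+t\sigma\ge0$ and $\sigma_-\le \rho/t_0$ a.e. for some fixed $t_0\in(0,\epsilon(\sigma))$, because $\rho+t_0\sigma\ge0$. Hence for $t\le (1-\lambda)t_0$ we have $\rho+t\sigma\ge \rho-\frac{t}{t_0}\rho\ge \lambda\rho$ a.e. Since $A$ is increasing, $U(\rho+t\sigma)\ge \int A(\lambda\rho)\,dx$, and this fixes $\delta=(1-\lambda)t_0$. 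By Remark \ref{large lambda}, I may take $\lambda$ as close to $1$ as needed.

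The main step, and the expected obstacle, is to show $\int A(\lambda\rho)=\infty$ whenever $\int A(\rho)=\infty$, using \ref{F5}. Writing $F(s)=\int_0^sP(\tau)\tau^{-2}d\tau$, we have $A(s)=sF(s)$, so
\[
\frac{A(\rho)}{A(\lambda\rho)}=\frac{1}{\lambda}\Bigl(1+\frac{\int_{\lambda\rho}^{\rho}P\tau^{-2}d\tau}{F(\lambda\rho)}\Bigr).
\]
By \ref{F5}, this ratio is bounded by some constant $K$ once $\rho\ge M$. On $\{\rho\ge M\}$ this gives $A(\lambda\rho)\ge K^{-1}A(\rho)$.

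On $\{\rho<M\}$, I would use the small-density behaviour of $A$ (Remark \ref{assumptions from P to A}: $A(s)\le s^{4/3}$ for small $s$, and $A\le A(M)$ is bounded). Splitting at a threshold $\eta$, the region $\{\rho<\eta\}$ contributes at most $\int\rho^{4/3}<\infty$. The region $\{\eta\le\rho<M\}$ contributes at most $A(M)\,\mu(\{\rho\ge\eta\})\le A(M)/\eta<\infty$, by Chebyshev and the unit mass of $\rho$. So $\int_{\{\rho<M\}}A(\rho)<\infty$, and the divergence of $\int A(\rho)$ must come from $\{\rho\ge M\}$. There $\int A(\lambda\rho)\ge K^{-1}\int_{\{\rho\ge M\}}A(\rho)=\infty$.

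The only delicate points are measurability and a.e. issues, and the fact that $\lambda$ in \ref{F5} and $\delta$ must be chosen in the right order. I choose $\lambda$ first from \ref{F5}, then $t_0$ from $\sigma\in P(\rho)$, and then $\delta=(1-\lambda)t_0$.
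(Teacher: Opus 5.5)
Your proposal is correct and follows essentially the same route as the paper. Like the paper, you reduce to showing $U(\rho+t\sigma)=\infty$, use $\rho+t\sigma\ge\lambda\rho$ from the definition of $P(\rho)$, and apply \ref{F5} on $\{\rho\ge M\}$, where the divergence of $\int A(\rho)\,dx$ must live. The differences are minor and in your favour: you bound $\int_{\{\rho<M\}}A(\rho)\,dx$ via $A(s)\le s^{4/3}$ and Chebyshev rather than the bounded support of $\rho$, so you do not need $\rho\in R_0(\mathbb{R}^3)$. You also fix one $\lambda$ from \ref{F5} instead of letting $\lambda=1-t/\epsilon(\sigma)$ vary with $t$, which avoids needing the \ref{F5} comparison uniformly in $\lambda$ and avoids treating the $\limsup$ as a pointwise bound.
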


\begin{proof}
	{
		\rm
		If $E_J (\rho)=\infty$, due to Remark \ref{equivalence of finite total energy and finite internal energy}, we know $U(\rho) = {\int_{\mathbb{R}^3}{A(\rho)}\,dx} = \infty$.
		Since $\rho$ has compact support, $A(s)$ is an increasing function of $s$, and $\rho=0$ if and only if $A(\rho)=0$, then we know $\forall N>0$,
		$$
		{\int_{\{{\rho \leq N}\}}{A(\rho)}\,dx} \leq A(N){\int_{\{{\rho \leq N}\}}1\,dx} \leq A(N){\int_{spt~\rho}1\,dx} = A(N)\mu\left( {spt~\rho} \right) < \infty
		$$
		where $\mu$ is Lebesgue measure.
		
		Since we know ${\int_{\mathbb{R}^3}{A(\rho)}\,dx} = {\int_{\{{\rho \leq N}\}}{A(\rho)}\,dx} + {\int_{\{{\rho > N}\}}{A(\rho)}\,dx} = \infty$, we know ${\int_{\{{\rho > N}\}}{A(\rho)}\,dx} = \infty$ for all $N>0$.
		
		On the other hand, suppose $\sigma\in P(\rho)$, then we know $\forall t\in(0,\epsilon(\sigma))$, we have $\rho+t\sigma\geq 0$, then $\sigma \geq - \frac{\rho}{t} \geq - \frac{\rho}{\epsilon(\sigma)}$. If $t\rightarrow0^+$, notice $A(s)$ is a increasing function, we have for all $N$,
		$$U\left( {\rho + t\sigma} \right) = {\int_{\mathbb{R}^3}{A(\rho + t\sigma)}\,dx} \geq {\int_{\mathbb{R}^3}{A\left( {\left( {1 - \frac{t}{\epsilon(\sigma)}} \right)\rho} \right)}\,dx} \geq {\int_{\{{\rho > N}\}}{A\left( {\left( {1 - \frac{t}{\epsilon(\sigma)}} \right)\rho} \right)}\,dx}$$
		Since $t>0$, then $\lambda = 1-\frac{t}{\epsilon(\sigma)} <1$, and ${\lim\limits_{t\rightarrow 0^{+}}\lambda} = 1$.
		
		Due to \ref{F5}, we denote $L \coloneq {\limsup\limits_{\rho\rightarrow\infty}\frac{\int_{\lambda\rho}^{\rho}{P(\tau)\tau^{- 2}d\tau}}{\int_{0}^{\lambda\rho}{P(\tau)\tau^{- 2}d\tau}}} <\infty$, and claim: $\exists \eta\in(0,1)$ and $N_1>0$, such that if $\lambda\in(\eta,1)$ and $\rho>N_1$, we have $A(\lambda \rho)\geq \frac{\lambda}{L+1} A(\rho)$ (for the case $P(\rho)=K\rho^\gamma$ with $\gamma>\frac43$ and hence $A(\rho)=\frac{K}{\gamma-1}\rho^{\gamma}$ (cf. (\ref{AwithP})), the inequality can be verified directly).
		
		In fact, given $\lambda<1$, by the definition of $A$ (\ref{A}), we have 
		$$
		A\left( {\lambda\rho} \right) = \lambda\rho{\int_{0}^{\lambda\rho}{P(\tau)\tau^{- 2}d\tau}} = \lambda\rho{\int_{0}^{\rho}{P(\tau)\tau^{- 2}d\tau}} - \lambda\rho{\int_{\lambda\rho}^{\rho}{P(\tau)\tau^{- 2}d\tau}} = \lambda A(\rho) - \lambda\rho{\int_{\lambda\rho}^{\rho}{P(\tau)\tau^{- 2}d\tau}}
		$$
		Due to Remark \ref{large lambda}, we can find $\eta\in (0,1)$ and $N_1>0$, such that if $\lambda \in (\eta,1)$ and $\rho>N_1$, we have
		$$
		\lambda\rho{\int_{\lambda\rho}^{\rho}{P(\tau)\tau^{- 2}d\tau}} \leq \lambda L\rho{\int_{0}^{\lambda\rho}{P(\tau)\tau^{- 2}d\tau}} = \lambda L\rho{\int_{0}^{\rho}{P(\tau)\tau^{- 2}d\tau}} - \lambda L\rho{\int_{\lambda\rho}^{\rho}{P(\tau)\tau^{- 2}d\tau}}
		$$.
		Then we have
		$$\lambda\rho{\int_{\lambda\rho}^{\rho}{P(\tau)\tau^{- 2}d\tau}} \leq \frac{\lambda L}{L + 1}\rho{\int_{0}^{\rho}{P(\tau)\tau^{- 2}d\tau}} = \frac{\lambda L}{L + 1}A(\rho)$$
		Therefore, by (\ref{A}), we obtain the claim:
		$$
		A\left( {\lambda\rho} \right) = \lambda A(\rho) - \lambda\rho{\int_{\lambda\rho}^{\rho}{P(\tau)\tau^{- 2}d\tau}} \geq \frac{\lambda}{L + 1}A(\rho)
		$$
		
		Due to this claim, we know $\exists \delta >0$, such that when $t\in (0, \delta)$, we have
		$$
		U\left( {\rho + t\sigma} \right) \geq {\int_{\{{\rho > N_1}\}}{A\left( {\left( {1 - \frac{t}{\epsilon(\sigma)}} \right)\rho} \right)}\,dx} \geq \frac{1}{L + 1}\left( {1 - \frac{t}{\epsilon(\sigma)}} \right){\int_{\{{\rho > N_1}\}}{A(\rho)}\,dx} = \infty
		$$
		Thanks to Proposition \ref{finite gravitational interaction energy}, we know $G(\rho+t\sigma, \rho+t\sigma)<\infty$, then we know $E_J(\rho+t\sigma)=\infty$ for $t\in (0,\delta)$.
	}
\end{proof}

\begin{remark}\label{weak local mini. but no topo. needed}
	Notice that, in Proposition \ref{existence of weak local minima when energy blows up}, we do not require the topology to be inherited from a topological vector space. In fact, when defining a weak local minimizer (see Appendix \ref{sectionA-calculus of variations in vector spaces}), we do not utilize any specific properties of any topology. 
	
%
\end{remark}
	\section*{Appendix}\label{Appendix}
	\appendix
	Here we introduce some preliminary knowledge that is used in the paper.
	\section{Calculus of Variations in Vector Spaces}\label{sectionA-calculus of variations in vector spaces}
	Since the functional we deal with is not differentiable in any usual sense, we begin with a less restrictive notion of differentiability.
	
	Let $X$ be a real vector space, $U$ a subset of $X$, and $E$ an extended real-valued function defined on $U$. Given $u \in U$, let $P(u)=\{v \in X \mid \exists \epsilon(v)>0, \text{ such that } u+t v \in U \text{ for } t\in [0, \epsilon(v))\}$. This set is a cone in $X$. If $P$ is any cone in $X$, let $X(P)$ be the linear subspace of $X$ generated by $P$. We say that $E$ is $P$\emph{-differentiable} at $u$ if $P \subset P(u)$ and there is a linear functional $l$ on $X(P)$ such that $\forall v \in P$
	$$
	\lim\limits_{t \rightarrow 0^{+}} t^{-1}\{E(u+t v)-E(u)\}=l(v),
	$$
	The linearity of $l$ is needed in our analysis; its precise domain is not essential here. We denote the ``derivative'' $l$ by $E_u^{\prime}$—with $P$ understood from context.

	We say $E$ has a \emph{weak local minimum} at $u \in U$ if for each $v \in P(u)$, there exists $\{t_n \}$ such that $t_n >0$, ${\lim\limits_{n\rightarrow\infty}t_{n}} = 0$ and $E\left( {u + t_{n}v} \right) \geq E(u)$ for $n$ large enough. And $u$ is called \emph{weak local minimizer}
	
	\begin{remark}
		If $u$ is a weak local minimizer, then one can check
		$$
		\lim _{t \rightarrow 0^{+}} \inf E(u+t v) \geq E(u)
		$$
		In fact, the latter corresponds to the definition of weak local minimizer in \cite[Section 2]{AB71}. However, this limited-based definition does not fully align with our intuitive understanding of a “minimizer”. For instance, consider $E(u) = -u^2$; according to this limit-based definition, $u = 0$ would qualify as a weak local minimizer. Yet, this is not a reasonable conclusion. Our definition, on the other hand, helps exclude such case.
		
	\end{remark}
	
	It is only through our definition of weak local minimizer that we can truly establish the following proposition in \cite[Section 2]{AB71}:
	
	\begin{proposition}[\ {\cite[Section 2]{AB71}}\ ] \label{nonnegative derivative}
		Suppose $E$ has a weak local minimum at $u$ and is $P$-differentiable at $u$. Then $E_u^\prime(v)\geq 0$ for all $v\in P$.
	\end{proposition}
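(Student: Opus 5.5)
The plan is to prove this directly from the definitions in this appendix: restrict the limit defining $E_u^\prime(v)$ to the sequence supplied by the weak-minimum property. Fix an arbitrary $v\in P$. Since $E$ is $P$-differentiable at $u$, we have $P\subset P(u)$, so $v\in P(u)$. The definition of a weak local minimum then gives a sequence $t_n>0$ with $t_n\to 0$ and $E(u+t_nv)\geq E(u)$ for all $n$ large enough. This is exactly the point where our sequence-based definition, rather than the $\liminf$ version of \cite[Section 2]{AB71}, is needed.

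Next I would form the difference quotients along this sequence. For $n$ large,
$$
q_n:=t_n^{-1}\{E(u+t_nv)-E(u)\}\geq 0 .
$$
By $P$-differentiability, $t^{-1}\{E(u+tv)-E(u)\}\to E_u^\prime(v)=l(v)$ as $t\to0^+$. Since $t_n\to 0^+$, it follows that $q_n\to l(v)$. A limit of eventually nonnegative real numbers is nonnegative, so $E_u^\prime(v)\geq 0$. As $v\in P$ was arbitrary, the proposition follows.

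The only delicate point, and the main thing to check, is that $E$ is extended real-valued, so the differences $E(u+t_nv)-E(u)$ must be meaningful real numbers. I would handle this as follows. The existence of the finite limit $l(v)$ in the definition of $P$-differentiability presupposes that $E(u)$ is finite; otherwise the quotient is $\infty-\infty$ or not real. It also forces $E(u+tv)$ to be finite for all sufficiently small $t>0$. Hence, after discarding finitely many indices, every $q_n$ is a real number and the limit argument above is legitimate. If desired, I would add a one-line remark that $E(u)<\infty$ is implicit in the hypothesis, consistent with how this result is used after Lemma \ref{diff. at mini.}.
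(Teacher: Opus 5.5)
Your argument is correct and essentially matches the paper's, which argues by contradiction: if $E_u^\prime(v)<0$, then $E(u+tv)<E(u)$ for all small $t>0$, contradicting the sequence $t_n$ supplied by the weak-minimum definition (as in the proof of Theorem \ref{Multiplier them}); this is just the contrapositive of your limit along $t_n$. Your remark that finiteness of $E(u)$ is implicit in $P$-differentiability is a harmless clarification that the paper leaves unstated.
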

	
	\begin{proof}
		{
			\rm
			The proof can be established using a proof by contradiction, or by referring to Theorem \ref{Multiplier them} below, specifically its proof for $E_u^{\prime}(w) \geq 0$.
		}
	\end{proof}
	
	\begin{remark}
		Generally speaking, if $X$ is a topological vector space, that is, $u+tv$ is continuous w.r.t. $t$, then one can prove that a local minimizer is a weak local minimizer. See, for example, Lemma \ref{local implies weak local}. This is also why we use the term ``weak''.
		
	\end{remark}
	
	To check $u$ is a weak local minimizer, we just need to show $E(u)$ is a minimum over $E(u+t_nv)$ for some ${t_n}$ in any arbitrary direction $v\in P(u)$, rather than for all small $t$ in the direction $v$. Moreover, we do not need to specify the topology or find a neighborhood of $\rho$. See also Remark \ref{weak local mini. but no topo. needed}.
	
	Although it is ``weak'', a weak local minimizer retains the possibility of computing or analyzing derivatives, as we can see in Proposition \ref{nonnegative derivative} and Theorem \ref{Multiplier them}.
	
	Let $W = \left\{ u \in U \mid g(u) = M \right\}$, where $g$ is a linear functional on $X$ and $M$ is a constant. Then similarly, we say $E$ has a \emph{weak local minimum at $u$ restricted to $W$} if $u \in W$ and for each $v\in P(u)$, there exists ${t_n}$ such that $t_n >0$, ${\lim\limits_{n\rightarrow\infty}t_{n}} = 0$, and for $n$ large enough, we have $u+t_{n}v\in W$\footnote{Let $N = \left\{ u \in U \mid g(u) = 0 \right\}$, we can check ``equivalent to $v\in P(u)$ such that $u+tv\in W$ for $t$ small enough'' is ``$v \in P(u)\cap N$'' . However, when considering the extended result in the Remark \ref{subset of linear space}, this criterion can be different and not equivalent, and it requires a more detailed discussion, similar to what is done in the proof of Proposition \ref{local constant multiplier}.}, and $E\left( {u + t_{n}w} \right) \geq E(u)$. Furthermore, we have a result for weak local minimum restricted to $W$:

	\begin{theorem}[(Generalized) Lagrange Multiplier Theorem {\cite[Section 2]{AB71}}] \label{Multiplier them}
		Suppose $E$ is $P$-differentiable at $u \in U$ and that its restriction to $W$ has a weak local minimum at $u$. Suppose also that $P$ is convex and that there is a $u_0 \in P$ such that $-u_0 \in P$ and $g\left(u_0\right) \neq 0$. Then there is a unique  constant $\lambda$ such that $E_u^{\prime}(v) \geq \lambda g(v)$, all $v \in P$.
	\end{theorem}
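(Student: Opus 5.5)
We would take the multiplier to be the only candidate the statement allows, $\lambda := E_u^{\prime}(u_0)/g(u_0)$, which makes sense because $g(u_0)\neq 0$. We then reduce the general inequality to the case of directions tangent to the constraint, where the weak local minimality restricted to $W$ gives a sign.

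\emph{Step 1: projecting a direction onto the tangent cone.} Fix $v \in P$ and set
$$
w := v - \frac{g(v)}{g(u_0)}\, u_0 .
$$
We check that $w \in P$. Write $c := -g(v)/g(u_0)$. If $c \geq 0$ then $c u_0 \in P$, and if $c<0$ then $|c|(-u_0) \in P$; either way $c u_0 \in P$, since $P$ is a cone. A convex cone is closed under addition, because $a+b = 2\cdot\tfrac12(a+b)$. Hence $w = v + c u_0 \in P$. By linearity of $g$ we get $g(w) = 0$.

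\emph{Step 2: using weak local minimality.} Since $P \subset P(u)$, there is $\epsilon(w)>0$ with $u+tw \in U$ for $t \in [0,\epsilon(w))$. For such $t$, linearity gives $g(u+tw) = g(u) + t g(w) = M$, so $u+tw \in W$. The restricted weak local minimum then supplies $t_n \to 0^+$ with $E(u+t_n w) \geq E(u)$ for $n$ large. Dividing by $t_n>0$ and using the $P$-differentiability, whose limit exists along every sequence $t\to0^+$, we obtain
$$
E_u^{\prime}(w) = \lim_{n\to\infty} t_n^{-1}\{E(u+t_n w) - E(u)\} \geq 0 .
$$
Since $E_u^{\prime}$ is linear on $X(P)$ and $v, u_0 \in P \subset X(P)$, this reads
$$
E_u^{\prime}(v) - \frac{g(v)}{g(u_0)} E_u^{\prime}(u_0) \geq 0, \qquad\text{i.e.}\qquad E_u^{\prime}(v) \geq \lambda g(v).
$$

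\emph{Step 3: uniqueness.} Suppose $\lambda'$ also satisfies $E_u^{\prime}(v)\ge \lambda' g(v)$ for all $v\in P$. Testing with $v=u_0$ and $v=-u_0$, both in $P$, and using linearity gives
$$
E_u^{\prime}(u_0) \geq \lambda' g(u_0) \quad\text{and}\quad -E_u^{\prime}(u_0) \geq -\lambda' g(u_0).
$$
Hence $E_u^{\prime}(u_0) = \lambda' g(u_0)$, and since $g(u_0)\neq0$ this forces $\lambda' = \lambda$.

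\emph{Main obstacle.} The only delicate point is Step 2: the feasibility of the perturbed direction $w$. We must make sure that $u+tw$ lies in $W$, and not merely in $U$, for the small times used in the definition of a restricted weak local minimum. We would handle this through $P\subset P(u)$ together with $g(w)=0$, as above. The subtle case is when the minimality is only known on a subset of $W$, as in the paper's Remark \ref{subset of linear space}. There the construction of $w$ must be checked against that subset directly, as is done in the proof of Proposition \ref{local constant multiplier}.
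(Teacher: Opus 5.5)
Your proposal is correct and follows essentially the same route as the paper: the same projected direction $w = v - \frac{g(v)}{g(u_0)}u_0$ (placed in $P$ via the cone property and convexity), the same multiplier $\lambda = E_u^{\prime}(u_0)/g(u_0)$, and the same uniqueness argument testing with $\pm u_0$. The only cosmetic difference is that you obtain $E_u^{\prime}(w)\geq 0$ by passing to the limit directly along the sequence $t_n$, whereas the paper argues by contradiction from the assumption $E_u^{\prime}(w)<0$.
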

	
	\begin{proof}
		{
			\rm
			Given $v\in P$, let $w=v-\frac{g(v)}{g(u_0)}u_0$, then $g(w)=0$. Since $P$ is a cone and $\pm u_0\in P$, we know $\frac{g(v)}{g(u_0)}u_0 \in P$. Furthermore, since $P$ is convex and $v\in P$, we know  $\frac{1}{2}v-\frac{1}{2}\frac{g(v)}{g(u_0)}u_0 \in P$. Using again the fact $P$ is a cone, we have $w=v-\frac{g(v)}{g(u_0)}u_0 \in P \subset P(u)$.
			
			Therefore, by the definition of $P(u)$, we know $u+tw\in U$ for small positive $t$, thus $u+tw \in W$. $E$ is $P$-differentiable at $u \in U$, then we know $E_u^{\prime}(w)$ exists. 
			
			Claim: $E_u^{\prime}(w) \geq 0$.
			
			In fact, if $E_u^{\prime}(w) < 0$, by definition of $E_u^{\prime}(w)$ we know exists $\delta>0$, such that if $0<t<\delta$, we have
			$$
			E\left( {u + tw} \right) < E(u)
			$$
			
			On the other hand, $u$ is a weak local minimizer restricted to $W$, then we know $\exists {t_n}$ such that $t_n >0$, ${\lim\limits_{n\rightarrow\infty}t_{n}} = 0$ and $E\left( {u + t_{n}w} \right) \geq E(u)$ for $n$ large enough. It leads to a contradiction. 
			
			Therefore, $E_u^{\prime}(w) \geq 0$. Then we know 
			$$
			0\leq E_u^{\prime}(w)=E_u^{\prime}(v)-\frac{g(v)}{g(u_0)}E_u^{\prime}(u_0)
			$$
			Thus we may take $\lambda=\frac{E_u^{\prime}(u_0)}{g(u_0)}=E_u^{\prime}(\frac{u_0}{g(u_0)})$, and get
			$$
			E_u^{\prime}(v) \geq \lambda g(v)
			$$
			On the other hand, if $\lambda$ does satisfy the above, then $E_u^{\prime}(u_0)\geq \lambda g(u_0)$ and $E_u^{\prime}(-u_0)\geq -\lambda g(u_0)$. Thus $\lambda$ has to be $E_u^{\prime}(\frac{u_0}{g(u_0)})$ and unique.
		}
	\end{proof}
	
	\begin{remark}\label{subset of linear space}
		{
			If we go through the proof above, we can find that $W$ can be a subset of $\left\{ u \in U \middle| g(u) = M \right\}$. At this point, we need to ensure that $u+t_nw$ in the proof above indeed lies within $W$ when $n$ is sufficiently large.
		}
	\end{remark}
	
	\section{Properties of Sobolev Spaces}\label{sectionB-properties of sobolev spaces}
	We review some properties of Sobolev spaces and $L^p$ spaces, complemented by proofs for certain propositions. Some of them will be used frequently in this paper.
	
%
	

	First we notice that Hardy-Littlewood-Sobolev Inequality \cite[Theorem 1.7]{BCD11} can fail when we consider the $L^\infty$ norm of the convolution. To estimate the bound in $L^{\infty}$, we introduce the following proposition:
	
	\begin{proposition}[Bound of Potential {\cite[Proposition 5]{AB71}}] \label{bound of potential}
		Suppose $\rho \in L^{1}\left(\mathbb{R}^{3}\right) \cap L^{p}\left(\mathbb{R}^{3}\right)$. If $1<p \leq \frac{3}{2}$, then $\forall r \in\left(3, \frac{3 p}{3-2 p}\right), V_{\rho} \in L^{r}\left(\mathbb{R}^{3}\right)$, and $\exists 0<b_{r}<1,0<c_{r}<1, C>0$, such that
		\begin{equation} \label{BoVs}
			\left\|V_{\rho}\right\|_{L^{r}} \leq C\left(\|\rho\|_{L^{1}}^{b_{r}}\|\rho\|_{L^{p}}^{1-b_{r}}+\|\rho\|_{L^{1}}^{c_{r}}\|\rho\|_{L^{p}}^{1-c_{r}}\right) 
		\end{equation}
		
		If $p>\frac{3}{2}$, then $V_{\rho}$ is bounded and continuous and satisfies (\ref{BoVs}) with $r=\infty$.
	\end{proposition}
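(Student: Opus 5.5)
The statement to prove is Proposition~\ref{bound of potential}. I would split $V_\rho=\rho*|x|^{-1}$ into a near-field and a far-field part:
$$V_\rho=\rho*k_1+\rho*k_2,\qquad k_1=|x|^{-1}\mathbf{1}_{\{|x|<1\}},\quad k_2=|x|^{-1}\mathbf{1}_{\{|x|\ge 1\}}.$$
Each piece is then estimated by Young's inequality using a different integrability of the kernel. First, $k_1\in L^q$ for every $q<3$. Second, $k_2\in L^s$ for every $s>3$ and also $k_2\in L^\infty$.

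\textbf{Far-field term.} Young's inequality with $\rho\in L^1$ gives $\|\rho*k_2\|_{L^r}\le\|\rho\|_{L^1}\|k_2\|_{L^r}$ for every $r>3$, including $r=\infty$. This explains the lower threshold $r>3$. To put the bound in the interpolated form of \eqref{BoVs}, I would instead apply Young's inequality with $\rho\in L^{a}$, where $1<a$ is close to $1$ and $a\le p$, and then interpolate
$$\|\rho\|_{L^a}\le\|\rho\|_{L^1}^{\theta}\|\rho\|_{L^p}^{1-\theta}.$$
Choosing $a$ strictly between $1$ and $p$ makes the exponent $b_r$ lie strictly inside $(0,1)$. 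The required condition is $1+1/r=1/a+1/s$ with some $s>3$, which is feasible because $r>3$.

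\textbf{Near-field term.} Young's inequality gives $\|\rho*k_1\|_{L^r}\le\|\rho\|_{L^{a'}}\|k_1\|_{L^q}$ with $1+1/r=1/a'+1/q$ and $q<3$. Taking $a'=p$ (or slightly below $p$, and interpolating as before to get $c_r\in(0,1)$) requires $1/r>1/p-2/3$, that is, $r<\frac{3p}{3-2p}$ when $p<3/2$. This gives the upper endpoint. When $p=3/2$ the upper endpoint is $\infty$, and every finite $r>3$ is attained by taking $q$ near $3$. Adding the two pieces yields \eqref{BoVs}.

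\textbf{Case $p>3/2$.} Here the dual exponent satisfies $p'<3$, so $k_1\in L^{p'}$. Hölder's inequality then gives $|\rho*k_1|\le\|\rho\|_{L^p}\|k_1\|_{L^{p'}}$, while $|\rho*k_2|\le\|\rho\|_{L^1}$. Hence $V_\rho\in L^\infty$, and the bound is brought into the form \eqref{BoVs} by the same interpolation device.

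\textbf{Continuity.} For continuity I would use the fact that convolution of an $L^p$ function with an $L^{p'}$ function is uniformly continuous, which holds since $p'<\infty$ and translation is continuous in $L^{p'}$. Likewise, $L^1*L^\infty$ is continuous once $k_2$ is replaced by a continuous truncation: split the kernel with a smooth cutoff instead of an indicator, so that the far-field kernel is bounded and uniformly continuous.

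\textbf{Main obstacle.} The main obstacle is the exponent bookkeeping needed to get both $b_r$ and $c_r$ strictly inside $(0,1)$, rather than only $\|\rho\|_{L^1}$ or only $\|\rho\|_{L^p}$ appearing. I would handle this as above: fix intermediate exponents strictly between $1$ and $p$, which is possible because the ranges for $r$ are open, and interpolate.
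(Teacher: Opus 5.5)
Your proposal is correct and follows essentially the paper's proof: the same split of $|x|^{-1}$ at $|x|=1$, Young/H\"older with exponents moved strictly inside $(1,p)$ and then interpolated (the paper's $L^{p-\epsilon}$, $L^{1+\epsilon}$ choice), and continuity from continuity of translations. The only differences are that you carry out the case $1<p\le\frac{3}{2}$, which the paper delegates to Auchmuty--Beals, and that your smooth cutoff is unnecessary: with the indicator split one already has $|(\rho*k_2)(x+h)-(\rho*k_2)(x)|\le\|\rho(\cdot+h)-\rho\|_{L^1}\|k_2\|_{L^\infty}$, so $L^1*L^\infty$ convolutions are automatically uniformly continuous, which is exactly how the paper argues.
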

	
	\begin{proof}
		{
			\rm
			One can see the detailed proof for the case $1<p \leq \frac{3}{2}$ in \cite{AB71}. For the case $p>$ $\frac{3}{2}$, we can use the same strategy: let $b_{1}(x)=\frac{1}{|x|} \cdot \mathbf{1}_{\{|x|<1\}}$, where $\mathbf{1}_{A}(x)=\left\{\begin{array}{ll}1, & x \in A \\ 0, & x \notin A\end{array}\right.$ is the indicator function of $A$. $b_{2}(x)=\frac{1}{|x|} \cdot \mathbf{1}_{\{|x| \geq 1\}}$. It is easy to see $b_{1} \in L^{p}\left(\mathbb{R}^{3}\right)$ for $1 \leq p<3$, $b_{2} \in L^{p}\left(\mathbb{R}^{3}\right)$ for $3<p \leq \infty$, and $\frac{1}{|x|}=b_{1}(x)+b_{2}(x)$. Therefore,
			
			$$
			\begin{aligned}
				\left\|V_{\rho}\right\|_{L^{\infty}\left(\mathbb{R}^{3}\right)} & =\left\|\frac{1}{|\cdot|} * \rho\right\|_{L^{\infty}\left(\mathbb{R}^{3}\right)} \\
				& \leq\left\|b_{1} * \rho\right\|_{L^{\infty}\left(\mathbb{R}^{3}\right)}+\left\|b_{2} * \rho\right\|_{L^{\infty}\left(\mathbb{R}^{3}\right)} \\
				& \leq\|\rho\|_{L^{p-\epsilon}} \cdot\left\|b_{1}\right\|_{L^{(p-\epsilon)^{\prime}}}+\|\rho\|_{L^{1+\epsilon}} \cdot\left\|b_{2}\right\|_{L^{(1+\epsilon)^{\prime}}}
			\end{aligned}
			$$
			The last inequality comes from Young's Inequality {\cite[Theorem 4.33 and Exercise 4.30]{Bre11}} with $\frac{1}{p-\epsilon}+\frac{1}{(p-\epsilon)^{\prime}}=1$ and $\frac{1}{1+\epsilon}+$ $\frac{1}{(1+\epsilon)^{\prime}}=1$. Since $p>\frac{3}{2}$, we can choose a small $\epsilon$ such that those norms are finite. And then apply the Interpolation Inequality \cite[Section 4.2]{Bre11} for $\|\rho\|_{L^{p-\epsilon}}$ amd $\|\rho\|_{L^{1+\epsilon}}$ to obtain (\ref{BoVs}). Moreover, we can check $\forall x \in \mathbb{R}^{3}, V_{\rho}(x)$ is well defined and finite. (The idea is similar to that of the following proof of continuity.) Let $\tau_{h} f(x)=f(x+h)$, then
			
			$$
			\begin{aligned}
				\left|V_{\rho}(x+h)-V_{\rho}(x)\right|=&\left|\int_{\mathbb{R}^3} \frac{\rho(x+h-y)-\rho(x-y)}{|y|} \,dy\right| \\
				\leq&\left|\int_{\mathbb{R}^3}\left(\tau_{h} \rho(x-y)-\rho(x-y)\right) b_{1}(y) \,dy\right| \\
				& +\left|\int_{\mathbb{R}^3}\left(\tau_{h} \rho(x-y)-\rho(x-y)\right) b_{2}(y) \,dy\right| \\
				\leq&\left\|\tau_{h} \rho-\rho\right\|_{L^{p}} \cdot\left\|b_{1}\right\|_{L^{p^{\prime}}}+\left\|\tau_{h} \rho-\rho\right\|_{L^{1}} \cdot\left\|b_{2}\right\|_{L^{\infty}}
			\end{aligned}
			$$
			The last inequality comes from Hölder's inequality. Notice $p>\frac{3}{2}$ thus its Hölder conjugate $p^{\prime}<3$ thus $\left\|b_{1}\right\|_{L^{p^{\prime}}}<\infty$. $\left\|\tau_{h} \rho-\rho\right\|_{L^{p}}$ and $\left\|\tau_{h} \rho-\rho\right\|_{L^{1}}$ go to 0 when $h \rightarrow 0$. Therefore, $V_{\rho}$ is continuous and $\left\|V_{\rho}\right\|_{L^{\infty}\left(\mathbb{R}^{3}\right)}$ is finite. Thus $V_{\rho}$ is bounded, i.e. $\exists C>0, \forall x \in \mathbb{R}^3, |V_{\rho}(X)|<C$.
		}
	\end{proof}

	Notice in the proof above we replace ``almost everywhere'' $\left(\left\|V_{\rho}\right\|_{L^{\infty}\left(\mathbb{R}^{3}\right)}<C\right)$ by ``everywhere'' $\left( |V_{\rho}(x)|<C\right)$. The following lemma gives another statement which connects weak derivative and classical derivative, which is a supplement to Morrey Inequality (see \cite[Theorem 9.12]{Bre11} or \cite[Subsection 5.6.2]{Eva10}).
	
	\begin{lemma}[Continuous Differentiable Representative of Potential] \label{continuous differentiable representative of potential}
		If $V \in W^{1, \infty}\left(\mathbb{R}^{3}\right)$, and its weak derivative and itself are continuous. Then $V$ has a representative which is continuously differentiable and the weak derivative coincides with the classical one a.e. in $\mathbb{R}^{3}$.
	\end{lemma}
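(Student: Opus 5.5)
The plan is to identify a continuous representative of $V$ and show directly that it is classically differentiable with derivative equal to the continuous weak gradient. Let $G=\nabla V\in L^\infty(\mathbb{R}^3;\mathbb{R}^3)$ denote the weak gradient. By hypothesis both $V$ and $G$ are continuous, meaning each has a continuous representative; we fix these representatives, still called $V$ and $G$. The aim is to show that for every $x\in\mathbb{R}^3$ and every direction $e_i$,
$$V(x+he_i)-V(x)=\int_0^h G_i(x+se_i)\,ds.$$
Once this holds, the fundamental theorem of calculus and the continuity of $G_i$ give $\partial_i V(x)=G_i(x)$ everywhere. Since the partial derivatives are continuous, $V\in C^1(\mathbb{R}^3)$, and its classical gradient coincides with the weak one everywhere, hence a.e.

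To obtain the identity, I will mollify. Let $\eta_\epsilon$ be a standard mollifier and set $V_\epsilon=\eta_\epsilon*V$. Then $V_\epsilon\in C^\infty$, and $\nabla V_\epsilon=\eta_\epsilon*G$ by the definition of the weak derivative: differentiate under the integral and move the derivative onto $V$ by integration by parts against the test function $\eta_\epsilon(x-\cdot)$. For smooth $V_\epsilon$ the one-dimensional fundamental theorem of calculus gives
$$V_\epsilon(x+he_i)-V_\epsilon(x)=\int_0^h(\eta_\epsilon*G_i)(x+se_i)\,ds.$$
Because $V$ and $G$ are continuous, $\eta_\epsilon*V\to V$ and $\eta_\epsilon*G\to G$ locally uniformly. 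This follows from uniform continuity on compact sets; boundedness from $W^{1,\infty}$ is not needed for this step, only local integrability. Passing to the limit $\epsilon\to0$ on both sides, which is legitimate for the integral over the compact segment by uniform convergence, yields the identity at every point $x$.

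The main subtlety is bookkeeping rather than analysis. The statement is about equivalence classes, so I must make explicit that the "continuous" $V$ is the representative that is modified on a null set, and that the mollified functions converge pointwise everywhere to that representative. Uniform convergence on compacts is what allows the pointwise identity at every $x$, rather than only almost every $x$.

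Alternatively, one could invoke Morrey's inequality: $V\in W^{1,\infty}$ is Lipschitz, hence differentiable a.e. with classical gradient equal to the weak one by Rademacher's theorem. Upgrading to differentiability everywhere would still require the mollification argument above. So I will present the mollifier proof directly and mention Morrey only to justify that the continuous representative exists and is Lipschitz.
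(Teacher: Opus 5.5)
Your proof is correct, and it reaches the conclusion by a somewhat different route than the paper.

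The paper's route:
\begin{itemize}
\item It first invokes the Lipschitz representative and Rademacher's theorem to get a.e.\ differentiability.
\item To upgrade to everywhere, it builds the antiderivative $\tilde f(x_1,x_2,x_3)=\int_0^{x_1}\partial_1 V(y,x_2,x_3)\,dy$.
\item It argues that $g=\tilde f-V$ is continuous with vanishing weak $x_1$-derivative, and mollifies $g$ to conclude that $g$ depends only on $(x_2,x_3)$.
\item Finally it differentiates the identity $\tilde f=c(x_2,x_3)+V$ classically in $x_1$.
\end{itemize}

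You instead mollify $V$ itself. You then pass the one-dimensional fundamental theorem of calculus for $V_\epsilon$ to the limit, using locally uniform convergence of $\eta_\epsilon*V$ and $\eta_\epsilon*G$.

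Both arguments rest on the same mechanism: mollification plus uniform convergence on compacts for continuous functions. Yours is more direct in three ways:
\begin{itemize}
\item It avoids having to check that $\tilde f$ has weak derivative $\partial_1 V$ on $\mathbb{R}^3$, a Fubini argument the paper only asserts.
\item It makes the Rademacher step superfluous.
\item As you note, it shows the hypothesis $V\in W^{1,\infty}$ can be weakened to $W^{1,1}_{\mathrm{loc}}$.
\end{itemize}
Your closing appeal to Morrey is likewise unnecessary, since continuity of $V$ is assumed.

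The paper's route reduces matters to ``a continuous function with zero weak partial derivative is constant along lines,'' which mirrors the one-dimensional Sobolev theory it cites. Both proofs actually give coincidence of the classical and weak gradients everywhere, which is stronger than the stated a.e.\ conclusion.
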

	
	\begin{proof}
		{
			\rm
			$V \in W^{1, \infty}\left(\mathbb{R}^{3}\right)$. Therefore, $\forall R>0, V$ has a representative which is Lipschitz in $B_{R}(0)$ \cite[Subsection 5.8.2]{Eva10}, and then for almost every $x \in \mathbb{R}^{3}, V$ is differentiable and the weak derivative coincides with the classical one by Rademacher's theorem \cite[Subsection 5.8.3]{Eva10}. To replace ``almost everywhere'' by ``everywhere'', we modify the discussion about Sobolev space in one dimension in \cite[Section 8.2]{Bre11}. Without loss of generality, we assume $j=1$, and consider $\widetilde{f}\left(\widetilde{x_1}, x_{2}, x_{3}\right)=\int_{0}^{x_{1}} \frac{\partial V}{\partial x_{1}}\left(y, x_{2}, x_{3}\right) \,d\widetilde{x_1}$, where $\frac{\partial V}{\partial x_{1}}$ is continuous by assumption, one can show its weak derivative and classical derivative with respect to $x_{1}$ are $\frac{\partial V}{\partial x_{1}}$ both in $\mathbb{R}^{3}$ and in $\mathbb{R}$. Therefore, $g:=\widetilde{f}-V$ is continuous and has 0 as its weak partial derivative. Consider $g_{n}=\rho_{n} * g$, where $\left\{\rho_{n}\right\}$ is a sequence of mollifiers, the classical derivative $\frac{\partial g_{n}}{\partial x_{1}}=\frac{\partial \rho_{n}}{\partial x_{1}} * g$ \cite[Proposition 4.20]{Bre11}, while by the definition of weak derivative we have $\frac{\partial \rho_{n}}{\partial x_{1}} * g(x)=\int_{\mathbb{R}^3} \frac{\partial \rho_{n}}{\partial x_{1}}(x-y) g(y) d y=\int_{\mathbb{R}^3} \rho_{n}(x-y) \frac{\partial g}{\partial x_{1}}(y) d y=0$. Therefore, for all point in $\mathbb{R}^{3}, g_{n}\left(x_{1}, x_{2}, x_{3}\right)=c_{n}\left(x_{2}, x_{3}\right)$. Because $g_{n} \rightarrow g$ uniformly on compact sets of $\mathbb{R}^{3}$ \cite[Proposition 4.21]{Bre11}, in particular converges everywhere, $g\left(x_{1}, x_{2}, x_{3}\right)=$ $c\left(x_{2}, x_{3}\right)$ for some function $c$. Therefore, $\widetilde{f}\left(x_{1}, x_{2}, x_{3}\right)=c\left(x_{2}, x_{3}\right)+V_{\rho}\left(x_{1}, x_{2}, x_{3}\right)$ everywhere in $\mathbb{R}^{3}$. Then we take the classical derivative with respect to $x_{1}$ again to see $V_{\rho}$'s classical derivative is $\frac{\partial V_{\rho}}{\partial x_{1}}$, which is also continuous.
		}
	\end{proof}

	Let $h(x)=-\int_{\mathbb{R}^3} \frac{y_{j} \rho(x-y)}{|y|^{3}} dy$. Similar to the arguments in proof of Proposition \ref{bound of potential}, we know $h$ is in $L^q(\mathbb{R}^3)$ for $\rho \in L^{1}\left(\mathbb{R}^{3}\right) \cap L^p(\mathbb{R}^3)$ under appropriate indices $p$ and $q$. Consequently, through the use of test functions and Fubini's theorem, it follows that $h$ is the weak derivative of $V_{\rho}$, i.e., the following result holds in the sense of distributions:
	$$\frac{\partial V_{\rho}}{\partial x_{j}}(x)=-\int_{\mathbb{R}^3} \frac{y_{j} \rho(x-y)}{|y|^{3}} \,dy.$$
	Therefore, combining the arguments and statements as above, we have:
	
	\begin{proposition}[Differentiability of Potential {\cite[Proposition 7]{AB71}}] \label{diff. of poten.}
		If $\rho \in L^{1}\left(\mathbb{R}^{3}\right) \cap L^{p}\left(\mathbb{R}^{3}\right)$ for some $p>3$, then $V_{\rho} \in$ $W^{1, \infty}\left(\mathbb{R}^{3}\right)$ is continuously differentiable and the weak derivative coincides with the classical one for all $x \in \mathbb{R}^{3}$.
	\end{proposition}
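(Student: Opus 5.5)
The plan is to combine the two facts the paper has essentially already assembled: the weak-derivative formula for $V_\rho$ and the regularity Lemma \ref{continuous differentiable representative of potential}. The proof therefore reduces to three things: showing $V_\rho\in L^\infty$ and continuous, showing $h_j(x):=-\int_{\mathbb{R}^3}\frac{y_j\rho(x-y)}{|y|^3}\,dy$ lies in $L^\infty$ and is continuous, and identifying $h_j$ with the weak derivative $\partial_{x_j}V_\rho$.

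\textbf{Step 1: $V_\rho$ is bounded and continuous.} Since $p>3>\frac32$, Proposition \ref{bound of potential} applies directly.

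\textbf{Step 2: $h_j$ is bounded and continuous.} Split the kernel $k_j(y)=-y_j|y|^{-3}$, which satisfies $|k_j(y)|\le |y|^{-2}$, as $k_j=k_j\mathbf{1}_{\{|y|<1\}}+k_j\mathbf{1}_{\{|y|\ge1\}}$. The first piece lies in $L^{q}$ for $q<\frac32$. Because $p>3$, the conjugate $p'<\frac32$, so Hölder gives $|k_j\mathbf{1}_{\{|y|<1\}}*\rho|\le \|k_j\mathbf{1}_{\{|y|<1\}}\|_{L^{p'}}\|\rho\|_{L^p}$. The second piece is bounded by $1$, so its convolution with $\rho$ is at most $\|\rho\|_{L^1}$. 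Hence $h_j\in L^\infty$ and is defined at every point. Continuity follows exactly as in Proposition \ref{bound of potential}: estimate $|h_j(x+h)-h_j(x)|\le \|\tau_h\rho-\rho\|_{L^p}\|k_j\mathbf{1}_{\{|y|<1\}}\|_{L^{p'}}+\|\tau_h\rho-\rho\|_{L^1}$, and both terms tend to $0$ by continuity of translation in $L^p$ and $L^1$.

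\textbf{Step 3: $h_j$ is the weak derivative of $V_\rho$.} Fix $\varphi\in C_c^\infty(\mathbb{R}^3)$. I would compute
\[
-\int_{\mathbb{R}^3} V_\rho\,\partial_j\varphi\,dx=-\int_{\mathbb{R}^3}\rho(z)\int_{\mathbb{R}^3}\frac{\partial_j\varphi(x)}{|x-z|}\,dx\,dz
\]
using Fubini. Absolute integrability holds because $\int |\partial_j\varphi(x)||x-z|^{-1}dx$ is bounded uniformly in $z$ and $\rho\in L^1$. For the inner integral, integrate by parts on $\{|x-z|>\epsilon\}$. The boundary term over the sphere of radius $\epsilon$ is $O(\epsilon)$, since the integrand is $\varphi\,\epsilon^{-1}$ over an area $\sim\epsilon^2$. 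Letting $\epsilon\to0$ gives $\int \varphi(x)\,\partial_{x_j}|x-z|^{-1}dx=-\int\varphi(x)\frac{x_j-z_j}{|x-z|^3}dx$. Applying Fubini again, now justified by $|k_j|\in L^1_{loc}$ together with Step 2's bounds, yields $\int h_j\varphi\,dx$. Thus $\partial_jV_\rho=h_j$ weakly.

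\textbf{Conclusion.} Steps 1–3 give $V_\rho\in W^{1,\infty}(\mathbb{R}^3)$ with $V_\rho$ and its weak derivatives continuous. Lemma \ref{continuous differentiable representative of potential} then yields that $V_\rho$ is $C^1$ and that the classical derivative equals $h_j$. The lemma states this agreement only a.e., but both functions are continuous, so they agree everywhere. Because $V_\rho$ is itself already continuous and defined pointwise, the $C^1$ representative is $V_\rho$ itself.

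The main obstacle is the careful justification of the Fubini and integration-by-parts steps near the singularity in Step 3. The remaining steps are routine repetitions of the proof of Proposition \ref{bound of potential}.
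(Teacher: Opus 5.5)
Your proposal is correct and follows the same route as the paper. The paper likewise gets boundedness and continuity of $V_\rho$ from Proposition \ref{bound of potential}, uses the same kernel splitting to show $h_j(x)=-\int_{\mathbb{R}^3} y_j|y|^{-3}\rho(x-y)\,dy$ is bounded and continuous, identifies $h_j$ as the weak derivative via test functions and Fubini, and concludes with Lemma \ref{continuous differentiable representative of potential}. You supply detail the paper only sketches, namely the integration by parts on $\{|x-z|>\epsilon\}$ and the upgrade from a.e.\ to everywhere agreement by continuity.
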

	
	\begin{proof}
		{
			\rm
			Similar to the arguments in proof of Proposition \ref{bound of potential}, we know $\frac{\partial V_{\rho}}{\partial x_{j}} \in L^{\infty}\left(\mathbb{R}^{3}\right) \cap C\left(\mathbb{R}^{3}\right)$, and together with Proposition \ref{bound of potential} we have $V_{\rho} \in$ $W^{1, \infty}\left(\mathbb{R}^{3}\right) \cap C\left(\mathbb{R}^{3}\right)$. Therefore, by Lemma \ref{continuous differentiable representative of potential} we get the results.
		}
	\end{proof}
	
	\begin{remark}
		One can also prove Proposition \ref{diff. of poten.} by direct computation.
	\end{remark}

%

\section*{Acknowledgments}\label{section-acknowledgments}
\addcontentsline{toc}{section}{Acknowledgments}
The author is partially supported by the National Science Foundation grant DMS-2308208. This work was primarily carried out during the author’s Master’s studies at the University of Bonn. The author thanks Juan Velázquez and Dimitri Cobb for their continued advice and support since the author’s time in Bonn, as well as Christof Sparber and Mimi Dai for their comments and support during the author's Ph.D. studies at the University of Illinois Chicago. Thanks also to Lorenzo Pompili, Shao Liu, Xiaopeng Cheng, Bernhard Kepka, Daniel Sánchez Simón del Pino for discussions, and to Théophile Dolmaire and other instructors. The author is grateful to his parents.


\bibliographystyle{abbrv}
\addcontentsline{toc}{section}{References}
\bibliography{references}

\end{document}